\pgfplotsset{compat=1.12}
\tikzset{
decision/.style={diamond, inner sep=1pt},
chance/.style={circle, minimum width=10pt, draw=blue, fill=none, thick, inner sep=0pt},
}
\titleformat{\section}{\large\bfseries}{\thesection\;\;\;}{0em}{}
\titleformat{\subsection}{\normalsize\bfseries\selectfont}{\thesubsection\;\;\;}{0em}{}
\setlist[itemize]{wide=0pt, leftmargin=16pt, labelwidth=10pt, align=left}
\renewcommand{\cftsecpresnum}{\begin{lrbox}{\@tempboxa}}
\renewcommand{\cftsecaftersnum}{\end{lrbox}}
\newcommand{\lc}{\mathrm{lc}}
\newcommand{\bC}{\mathbb{C}}
\newcommand{\bF}{\mathbb{F}}
\newcommand{\bN}{\mathbb{N}}
\newcommand{\bQ}{\mathbb{Q}}
\newcommand{\bZ}{\mathbb{Z}}
\theoremstyle{definition}
\newtheorem{theorem}{Theorem}[section]
\newtheorem{proposition}[theorem]{Proposition}
\newtheorem{lemma}[theorem]{Lemma}
\newtheorem{corollary}[theorem]{Corollary}
\newtheorem{definition}[theorem]{Definition}
\newtheorem{example}[theorem]{Example}
\newtheorem*{theorem*}{Theorem}
\newtheorem*{proposition*}{Proposition}
\newtheorem*{lemma*}{Lemma}
\newtheorem*{corollary*}{Corollary}
\newtheorem*{definition*}{Definition}
\newtheorem*{example*}{Example}
\newtheorem*{remark*}{Remark}
\theoremstyle{remark}
\newenvironment{red}{\relax\color{red}}{\hspace*{.5ex}\relax}
\newenvironment{blue}{\relax\color{blue}}{\hspace*{.5ex}\relax}
\newcommand{\ber}{\begin{red}}
\newcommand{\er}{\end{red}}
\newcommand{\beb}{\begin{blue}}
\newcommand{\eb}{\end{blue}}
\begin{document}


\title{Powerful Fibonacci polynomials over finite fields}


\author{Graeme Bates, Ryan Jesubalan, Seewoo Lee, Jane Lu, and Hyewon Shim}
\date{}


\maketitle


\begin{abstract}
    Bugeaud, Mignotte, and Siksek proved that the only perfect powers in Fibonacci sequence are 0, 1, 8, and 144.
    In this paper, we study the polynomial analogue of the problem. Especially, we give a complete characterization of the Fibonacci polynomials that are perfect powers or powerful over finite fields, where there are infinitely many of them.
    We also give similar characterizations for some of Horadam's generalized Lucas polynomial sequences, which include Fibonacci, Lucas, Chebyshev, and Jacobsthal polynomials.
\end{abstract}

\section{Introduction}
\label{sec:intro}

Fibonacci numbers $F_n$ and Lucas numbers $L_n$ are among the most widely studied objects in mathematics. They are defined recursively by
\begin{align*}
F_0 = 0, &\quad F_1 = 1,\quad F_{n} = F_{n-1} + F_{n-2} \quad(n \ge 2), \\
L_0 = 2, &\quad L_1 = 1,\quad L_{n} = L_{n-1} + L_{n-2} \quad(n \ge 2).
\end{align*}
Many authors have studied various arithmetic properties of these sequences.
For example, Brillhart--Montgomery--Silverman \cite{brillhart1988tables} studied factorizations of Fibonacci and Lucas numbers for small $n$. Dubner--Keller \cite{dubner1999new} studied Fibonacci and Lucas numbers that are prime, and it remains open whether there are infinitely many such numbers.
Some works are also interested in determining all perfect powers in the Fibonacci and Lucas sequences.
For instance, Cohn \cite{cohn1964square} gave an elementary proof showing that $F_0 = 0$, $F_1 = F_2 = 1$, and $F_{12} = 144$ are the only squares.
Bugeaud, Mignotte, and Siksek \cite{bugeaud2006classical} showed that $0$, $1$, $8$, and $144$ are the only perfect powers appearing in the Fibonacci sequence (see Section \ref{subsec:prelim_diophantine}).

An analogue of this problem can be formulated for \emph{Fibonacci polynomials} $F_n(T) \in \bZ[T]$, the polynomial analogues of Fibonacci numbers, defined by
\begin{equation}
\label{eqn:fibo_poly}
    F_0(T) = 0, \quad F_1(T) = 1, \quad F_{n}(T) = T F_{n-1}(T) + F_{n-2}(T).
\end{equation}
For example, several authors have studied the irreducibility of Fibonacci polynomials. Webb and Parberry showed that $F_n(T)$ is irreducible over $\bQ$ if and only if $n$ is prime \cite{webb1969divisibility}.
Bergum and Hoggatt considered the analogous problem for Lucas polynomials and their generalizations in \cite{bergum1974irreducibility}.
In \cite{kitayama2017irreducibility}, Kitayama and Shiomi studied irreducibility over finite fields, giving necessary and sufficient conditions for these polynomials to be irreducible over $\bF_q$.

In this paper, we study Fibonacci polynomials over finite fields that are perfect powers, analogous to the work of Cohn \cite{cohn1964square} and Bugeaud--Mignotte--Siksek \cite{bugeaud2006classical}.
In particular, we give a complete characterization of perfect powers.
\begin{theorem}
\label{thm:mainFn}
    Let $j > 1$. Let $p$ be an odd prime number coprime to \(2j\), $q$ be a power of $p$, and let $a = \mathrm{ord}_{2j}(p)$ denote the order of $p$ in $\left( \mathbb{Z}/2j\mathbb{Z} \right)^\times$. For $n > 0$, $F_n(T)$ is a perfect $j$-th power in $\mathbb{F}_q[T]$ if and only if $n = p^{ak}$ for some $k \in \mathbb{N}$.
\end{theorem}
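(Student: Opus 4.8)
The plan is to work with the Binet-type formula for $F_n$ and reduce everything to two polynomial identities. Over $\overline{\bF_q(T)}$ let $\alpha,\beta$ be the roots of $x^2-Tx-1$, so $\alpha+\beta=T$, $\alpha\beta=-1$, $(\alpha-\beta)^2=T^2+4=:D(T)$, and $F_n(T)=(\alpha^n-\beta^n)/(\alpha-\beta)$. With $L_n(T)=\alpha^n+\beta^n$ the $n$-th Lucas polynomial, the identities
\[
L_n(T)^2-D(T)F_n(T)^2=4(-1)^n,\qquad D(T)F_n'(T)=nL_n(T)-TF_n(T)
\]
hold in $\bZ[T]$ (the first by expanding, the second by differentiating $F_n=(\alpha^n-\beta^n)(\alpha-\beta)^{-1}$ via $\alpha'=\alpha/(\alpha-\beta)$, $\beta'=-\beta/(\alpha-\beta)$), hence also in $\bF_q[T]$.

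First I would treat the case $p\nmid n$. If some $c\in\overline{\bF_q}$ were a repeated root of $F_n$, then $F_n(c)=F_n'(c)=0$, so the second identity gives $nL_n(c)=0$, hence $L_n(c)=0$ (as $p\nmid n$), and then the first identity gives $4(-1)^n=0$ in $\overline{\bF_q}$ — impossible since $p$ is odd. So $F_n$ is squarefree in $\bF_q[T]$ for $p\nmid n$; as $\deg F_n=n-1$, such an $F_n$ is a perfect $j$-th power (with $j>1$) precisely when $n=1$, which is the claimed criterion with $k=0$.

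The main point is a reduction formula modulo $p$. From $\binom{p}{k}\equiv0\pmod p$ for $0<k<p$ one gets the identity $\sum_{i=0}^{p-1}A^iB^{p-1-i}\equiv(A-B)^{p-1}\pmod p$ in $\bZ[A,B]$; specializing to $A=\alpha^n$, $B=\beta^n$ and using $(\alpha^n-\beta^n)^2=D\,F_n^2$ yields, in $\bF_q[T]$,
\[
F_{pn}(T)=F_n(T)\cdot\frac{\alpha^{pn}-\beta^{pn}}{\alpha^n-\beta^n}=F_n(T)^{p}\,D(T)^{(p-1)/2}.
\]
Iterating, if $n=p^m n'$ with $p\nmid n'$ and $m\ge1$ then $F_{p^m n'}(T)=F_{n'}(T)^{p^m}\,D(T)^{(p^m-1)/2}$ in $\bF_q[T]$.

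Finally I would extract the equivalence. Fix $i\in\overline{\bF_q}$ with $i^2=-1$. By the first step $F_{n'}$ is squarefree, and since $x^2-Tx-1$ has the double root $\pm i$ at $T=\pm2i$ one finds $F_{n'}(\pm2i)=n'(\pm i)^{n'-1}\ne0$ (as $p\nmid n'$), so $F_{n'}$ is coprime to $D(T)=(T-2i)(T+2i)$, which is squarefree because $p$ is odd. Hence in the factorization above every irreducible factor of $F_{n'}$ has multiplicity exactly $p^m$ and every irreducible factor of $D$ has multiplicity $(p^m-1)/2$. Thus if $F_n$ is a $j$-th power then $j\mid p^m$ whenever $F_{n'}$ is nonconstant; since $\gcd(j,p)=1$ and $j>1$ this forces $n'=1$, i.e. $n=p^m$, and then $F_n=D(T)^{(p^m-1)/2}$ is a $j$-th power iff $j\mid(p^m-1)/2$, iff $2j\mid p^m-1$, iff $a=\mathrm{ord}_{2j}(p)$ divides $m$, i.e. $n=p^{ak}$. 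Conversely all such $n$ work, by the same computation. The step I expect to require the most care is establishing the mod-$p$ reduction formula cleanly (checking that the relevant quotients lie in $\bZ[T]$ so the congruence makes sense) together with the coprimality of $F_{n'}$ and $D$; once those are in hand, the multiplicity bookkeeping and the $\gcd(j,p)=1$ obstruction finish the argument.
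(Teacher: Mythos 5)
Your proof is correct, and its overall architecture coincides with the paper's: both rest on the factorization $F_{p^m n'}=F_{n'}^{p^m}(T^2+4)^{(p^m-1)/2}$ obtained from Binet's formula and the Frobenius, followed by the same multiplicity bookkeeping using $\gcd(j,p)=1$ and $2j\mid p^{m}-1\Leftrightarrow a\mid m$. Where you genuinely diverge is in how the two supporting facts are established. For square-freeness of $F_{n'}$ when $p\nmid n'$, the paper either invokes the discriminant formula $\mathrm{Disc}(F_m)=(-1)^{(m-1)(m-2)/2}2^{m-1}m^{m-3}$ of Florez--Higuita--Ramirez, or shows the complex roots $i(\zeta_{2m}^k+\zeta_{2m}^{-k})$ stay distinct modulo $p$; you instead use the pair of identities $L_n^2-(T^2+4)F_n^2=4(-1)^n$ and $(T^2+4)F_n'=nL_n-TF_n$, which gives a short, self-contained derivative argument. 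For coprimality of $F_{n'}$ with $T^2+4$, the paper cites the reduction $F_n \bmod (T^2+4)$ from the same reference, while you evaluate at $T=\pm 2i$ and compute $F_{n'}(\pm 2i)=n'(\pm i)^{n'-1}$ directly (this is exactly the content of the paper's Corollary \ref{cor:alpha}, and follows rigorously from $F_n=\sum_{\ell=0}^{n-1}\alpha^\ell\beta^{n-1-\ell}$ specialized at $\alpha=\beta=\pm i$, a point worth making explicit). Your route buys independence from the external discriminant/resultant machinery at essentially no extra cost; the paper's route buys the generalizations of Section 4, where the same discriminant formulas handle all the Horadam sequences $W_n$ uniformly.
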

Similar result is available for \(p = 2\) as well (Theorem \ref{thm:mainFnp2}).
Moreover, we give a characterization of Fibonacci polynomials that are \emph{powerful}, i.e. multiplicities of each irreducible factor are at least two.\footnote{Although we will mostly focus on nonconstant polynomials, constant polynomials will also be considered powerful, since they have no non-unit irreducible prime factors.}
\begin{theorem}
    \label{thm:mainFnpowerful}
    Let $p > 3$ be a prime and $\bF_q$ be a finite field of characteristic $p$.
    Then \(F_n(T)\) is powerful in \(\bF_q[T]\) if and only if \(p \mid n\).
\end{theorem}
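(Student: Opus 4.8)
My plan is to push everything through two explicit identities in characteristic $p$. Work in an algebraic closure of $\bF_q(T)$, let $\alpha,\beta$ be the roots of $X^2 - TX - 1$, so that $\alpha+\beta = T$, $\alpha\beta = -1$, $(\alpha-\beta)^2 = T^2+4$, and $F_n(T) = (\alpha^n - \beta^n)/(\alpha-\beta)$, and let $L_n(T)$ be the Lucas polynomials ($L_0 = 2$, $L_1 = T$, $L_n = TL_{n-1}+L_{n-2}$), so $L_n = \alpha^n + \beta^n$. Since $\cha\bF_q = p$ is odd, the Frobenius gives $\alpha^{pN} - \beta^{pN} = (\alpha^N - \beta^N)^p$, hence
\[
F_{pN}(T) \;=\; \frac{(\alpha^N - \beta^N)^p}{\alpha-\beta} \;=\; F_N(T)^p\,(\alpha-\beta)^{p-1} \;=\; F_N(T)^p\,(T^2+4)^{(p-1)/2},
\]
and in particular $F_p(T) = (T^2+4)^{(p-1)/2}$. (This can also be obtained without the function-field manipulation, by deducing it from $M^{pN} = (M^N)^p$ for $M = \left(\begin{smallmatrix}T&1\\1&0\end{smallmatrix}\right)$ via Cayley--Hamilton together with the standard identity $L_N^2 - (T^2+4)F_N^2 = 4(-1)^N$.) Iterating, for $n = p^e m$ with $p \nmid m$ this yields
\[
F_n(T) \;=\; (T^2+4)^{(p^e-1)/2}\;F_m(T)^{p^e}.
\]

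For the \emph{if} direction, suppose $p \mid n$, i.e.\ $e \geq 1$. Then $p^e \geq p \geq 5$, so $(p^e-1)/2 \geq 2$ and $p^e \geq 2$. Any monic irreducible $\pi \in \bF_q[T]$ dividing $F_n$ must divide $T^2+4$ or $F_m$, and in either case the displayed factorization forces the multiplicity of $\pi$ in $F_n$ to be at least $2$; hence $F_n$ is powerful. This is exactly where the hypothesis $p > 3$ enters: for $p = 3$ one only gets exponent $(p-1)/2 = 1$, and indeed $F_3 = T^2+1$ is squarefree and not powerful.

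For the \emph{only if} direction I would argue the contrapositive: if $p \nmid n$ then $F_n$ is squarefree, so (being nonconstant for $n \geq 2$, as $\deg F_n = n-1$) every irreducible factor has multiplicity one and $F_n$ is not powerful. Suppose instead $(T - r)^2 \mid F_n$ for some $r \in \overline{\bF_q}$, so $F_n(r) = F_n'(r) = 0$. Using the identities
\[
(T^2+4)\,F_n'(T) \;=\; n\,L_n(T) - T\,F_n(T), \qquad L_n(T)^2 - (T^2+4)\,F_n(T)^2 \;=\; 4(-1)^n,
\]
each of which follows by a routine induction from the recursions (the second is a Cassini-type identity, so it is even an identity in $\bZ[T]$ and survives reduction mod $p$), the first relation at $T = r$ gives $n\,L_n(r) = 0$; since $p \nmid n$ this forces $L_n(r) = 0$, and then the second relation at $T = r$ gives $4(-1)^n = 0$ in $\overline{\bF_q}$, contradicting $p \neq 2$. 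Hence $F_n$ is squarefree. The cases $n \in \{0,1\}$ are degenerate ($F_0 = 0$, $F_1 = 1$), and the statement is to be read for nonconstant $F_n$, i.e.\ $n \geq 2$.

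I do not expect a serious obstacle beyond Step 1: once the Frobenius recursion $F_{pN} = (T^2+4)^{(p-1)/2}F_N^p$ is in hand, both directions are short. The points that need a little care are making the Binet/matrix computation rigorous over $\bF_q(T)$, checking the auxiliary polynomial identities in the last paragraph (routine, since they hold already over $\bZ$), and — the one genuinely substantive point — remembering that the inequality $(p^e-1)/2 \geq 2$ in the \emph{if} direction really requires $p \geq 5$, since the statement is false for $p = 3$.
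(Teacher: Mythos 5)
Your proof is correct, and its skeleton --- the Frobenius factorization $F_{p^e m}(T) = (T^2+4)^{(p^e-1)/2} F_m(T)^{p^e}$ for the forward implication, plus squarefreeness of $F_m$ for $p \nmid m$ for the converse --- is exactly the paper's (Proposition \ref{prop:Fpkm} together with Proposition \ref{prop:Fmsqfree}), including the observation that $p>3$ is needed precisely to force $(p^e-1)/2 \ge 2$. Where you genuinely diverge is in how squarefreeness is established. The paper gives two proofs: one cites the discriminant formula $\mathrm{Disc}(F_m) = (-1)^{(m-1)(m-2)/2}2^{m-1}m^{m-3}$ of Florez--Higuita--Ramirez and notes it is nonzero mod $p$; the other shows the roots $i(\zeta_{2m}^{k}+\zeta_{2m}^{-k})$ stay distinct in $\overline{\bF}_p$ via a cyclotomic lemma (Lemma \ref{distinct}). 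You instead combine the two integral identities $(T^2+4)F_n'(T) = nL_n(T) - TF_n(T)$ and $L_n(T)^2 - (T^2+4)F_n(T)^2 = 4(-1)^n$ (both verifiable by Binet or induction over $\bZ[T]$, hence valid after reduction mod $p$) to show that a repeated root $r$ would give $nL_n(r)=0$, hence $L_n(r)=0$, hence $4(-1)^n = 0$, impossible when $p \nmid 2n$. This is a third, self-contained route: it avoids both the external discriminant citation and the roots-of-unity bookkeeping, at the modest cost of checking two auxiliary identities, and in effect it reproves the non-vanishing of the discriminant directly. I see no gap; your handling of the degenerate cases $n\in\{0,1\}$ and of the multiplicity count in the forward direction (where no coprimality of $T^2+4$ and $F_m$ is even needed) is also fine.
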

Similar result holds for $p = 2$ and $p = 3$ as well (Corollary \ref{cor:powerfulp2} and \ref{cor:powerfulp3}).

These results contrast to the case of the original Fibonacci sequence.
There are only finitely many Fibonacci numbers that are perfect powers or powerful, where the second assertion is true under the ABC conjecture \cite{ribenboim1999abc,yabuta2007abc}.

In fact, we prove similar characterizations for certain \emph{generalized Lucas polynomial sequences} of Horadam \cite{Horadam1996}, which generalize both Fibonacci polynomials and Lucas polynomials (see Definition \ref{lucassq}).
The proofs of these characterizations are based on the polynomial analogue of Binet's formula and the discriminant formula of Florez, Higuita, and Ramirez \cite{florez2019resultant}, except that certain small characteristic cases need separate considerations.
We also provide explicit factorizations of these polynomials for certain $p$'s, where the accompanying code can be found in \href{https://github.com/seewoo5/sage-function-field}{github.com/seewoo5/sage-function-field}.

\subsection*{Acknowledgements}

This work was completed during the 2025 Berkeley Math REU program and supported by NSF RTG grant DMS-2342225, MPS Scholars, and the Gordon and Betty Moore Foundation.
We thank Tony Feng for organizing the REU.
\section{Preliminaries}
\label{sec:prelim}

\subsection{Diophantine equations with Fibonacci numbers}
\label{subsec:prelim_diophantine}

Diophantine equations involving Fibonacci numbers have been widely studied.
One such equation asks to find all perfect powers in the Fibonacci sequence, i.e. solutions for the equation
\begin{equation}
\label{eqn:Fnpower}    
F_n = y^p
\end{equation}
for prime \(p\).
Cohn \cite{cohn1964square} proved that the only solutions for \(p = 2\) are \(F_0 = 0^2, F_1 = F_2 = 1^2\) and \(F_{12} = 12^2\) by only using  elementary arguments, and a similar method was used to find all squares in the Lucas sequence \cite{cohn1965lucas}.
Subsequent papers studied \eqref{eqn:Fnpower} for larger \(p\), such as \(p = 3\) \cite{london1969fibonacci,petho1983full}, \(p = 5\) \cite{petho1981perfect}, and \(p < 5.1 \cdot 10^{17}\) \cite{petho2001diophantine}.
In \cite{bugeaud2006classical}, Bugeaud, Mignotte, and Siksek finally proved that these are all such numbers, using the modular method \cite{wiles1995modular,taylor1995ring} and Baker's theorem of linear forms \cite{shorey1986exponential}.
\begin{theorem}[{\cite[Theorem 1, Theorem 2]{bugeaud2006classical}}]
    The only perfect powers in the Fibonacci sequence are \(F_0 = 0, F_1 = 1, F_2 = 1, F_6 = 8,\) and \(F_{12} = 144\).
    Also, the only perfect powers in the Lucas sequence are \(L_1 = 1\) and \(L_3 = 4\).
\end{theorem}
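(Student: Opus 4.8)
The plan is to treat the two statements together, since the equations $F_n = y^m$ and $L_n = y^m$ are linked through the standard identities. First I would reduce to prime exponents: if $F_n$ (resp. $L_n$) is a perfect power, then it is a perfect $p$-th power for some prime $p$, so it suffices to solve $F_n = y^p$ and $L_n = y^p$ for each prime $p$. The engine throughout is Binet's formula
\[
F_n = \frac{\alpha^n - \beta^n}{\alpha - \beta}, \qquad L_n = \alpha^n + \beta^n, \qquad \alpha = \tfrac{1+\sqrt5}{2},\ \beta = \tfrac{1-\sqrt5}{2},
\]
together with the identities $L_n^2 - 5F_n^2 = 4(-1)^n$ and $F_{2m} = F_m L_m$, and the elementary fact that $\gcd(F_m, L_m) \in \{1,2\}$ (with the value $2$ occurring exactly when $3 \mid m$). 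Splitting on the parity of $n$ and using $F_{2m} = F_m L_m$, one reduces each problem to a short list of equations in which a single Fibonacci or Lucas term (possibly divided by $2$) is forced to be a perfect $p$-th power, i.e.\ to equations of the shape $L_m = z^p$, $L_m = 2 z^p$, $F_m = z^p$, or $F_m = 2z^p$. Via the norm identity $L_m^2 - 5F_m^2 = \pm 4$, these are recast as Lebesgue--Nagell and ternary Fermat equations such as $X^2 + 4 = 5 Y^p$ or $A X^p + B Y^p = C Z^2$.

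Second, I would dispose of the small exponents separately, since the modular machinery is weakest there. For $p = 2$ the problem is exactly Cohn's square-detection argument, using quadratic residues and a two-variable descent on $\bZ[\alpha]$, yielding $F_0, F_1, F_2$, $F_{12} = 144$ and $L_1$, $L_3 = 4$; the cases $p = 3,5,7$ are handled by congruence sieves modulo a carefully chosen set of primes, together with the earlier results of London--Finkelstein and Peth\H{o} and the resolution of a handful of Thue and genus-$2$ equations. This recovers the solution $F_6 = 8 = 2^3$ and confirms that no further sporadic powers appear for small $p$.

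The heart of the proof is the treatment of a general prime $p$ by the modular method. To a hypothetical nontrivial solution I would attach a Frey--Hellegouarch elliptic curve $E$ over $\bQ$ whose minimal discriminant and conductor are controlled by the ternary equation above, so that the associated mod-$p$ Galois representation $\bar\rho_{E,p}\colon \Gal(\overline{\bQ}/\bQ) \to \GL_2(\bF_p)$ is, after checking irreducibility, unramified outside a small set of primes with bounded valuations. By modularity of elliptic curves over $\bQ$ and Ribet's level-lowering, $\bar\rho_{E,p}$ then arises from a newform of weight $2$ and some small predicted level $N$. Enumerating the finitely many newforms at level $N$ and comparing traces of $\Frob_\ell$ against the congruences imposed by $E$ eliminates every candidate for all but finitely many $p$; the surviving exponents are cut off by an explicit lower bound for a linear form in two logarithms in the style of Baker--Matveev, and a final LLL/continued-fraction reduction shrinks the resulting astronomically large but finite range of $n$ to something checkable by machine.

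The main obstacle, beyond assembling these heavy inputs, is the elimination step for the bounded range of exponents that survive level-lowering: one must rule out the predicted newforms at each admissible level, and when a newform has the same residual representation as $E$ for infinitely many $\ell$ (for instance a form with complex multiplication, or one coming from an elliptic curve with a rational isogeny), the trace comparison alone fails and must be supplemented by local analysis at the primes dividing $5$ and by the image-of-inertia information extracted from the Frey curve. Reconciling the Baker bound with the modular sieve so that no exponent escapes both, and certifying the large finite computations of newform spaces and of the reduced logarithmic ranges, is where essentially all of the difficulty lies.
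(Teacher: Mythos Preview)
The paper does not contain a proof of this theorem: it is quoted in the preliminaries section purely as background, with an explicit citation to Bugeaud--Mignotte--Siksek \cite{bugeaud2006classical}, and the remainder of the paper concerns the polynomial analogue over $\bF_q[T]$, whose proofs are entirely elementary (Binet's formula in characteristic $p$, discriminant computations, and coprimality arguments). There is therefore nothing in the paper to compare your proposal against.

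That said, your outline is a faithful high-level summary of the actual Bugeaud--Mignotte--Siksek argument: reduction to prime exponents, the classical case $p=2$ via Cohn, small primes by sieving and Thue equations, the modular method (Frey curve, modularity, Ribet level-lowering, newform elimination) for general $p$, and a Baker-type linear-forms bound combined with lattice reduction to close the remaining finite range. You have also correctly identified the genuine bottleneck, namely the elimination of newforms at the surviving levels and the interaction between the modular sieve and the Baker bound. As a sketch of the literature proof this is accurate; just be aware that the present paper neither supplies nor requires any of it.
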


More generally, one can ask to characterize Fibonacci numbers that are \emph{powerful}, i.e. the multiplicities of each prime factors are at least two.
Assuming ABC conjecture, it was shown that there are only finitely many powerful numbers in Fibonacci sequence \cite{ribenboim1999abc,yabuta2007abc}.

\subsection{Fibonacci polynomials}
\label{subsec:prelim_fibonaccipoly}

Fibonacci polynomials (over any ring) are polynomial analogues of Fibonacci numbers, which are recursively defined as in \eqref{eqn:fibo_poly}.
The first few Fibonacci polynomials are:
\[
F_0(T) = 0, \quad F_1(T) = 1, \quad F_2(T) = T, \quad F_3(T) = T^2 + 1, \quad F_4(T) = T^3 + 2T, \quad\dots
\]
These polynomials satisfy the following analogue of the \emph{Binet's formula}.
\begin{proposition}[{\cite[Lemma 3.1]{kitayama2017irreducibility}}]
    Let $\alpha(T), \beta(T)$ be the roots of $X^2 - TX - 1$.
    Then
    \begin{equation}
    \label{eqn:FnBinet}
        F_n(T) = \frac{\alpha(T)^n - \beta(T)^n}{\alpha(T) - \beta(T)}.
    \end{equation}
\end{proposition}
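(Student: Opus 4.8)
The plan is to proceed by induction on $n$, showing that the right-hand side of \eqref{eqn:FnBinet} satisfies the same initial conditions and recurrence as $F_n(T)$. First I would fix a convenient setting: since $\alpha(T)\beta(T) = -1$ and $\alpha(T) + \beta(T) = T$, the element $\alpha(T) - \beta(T)$ satisfies $(\alpha(T)-\beta(T))^2 = (\alpha(T)+\beta(T))^2 - 4\alpha(T)\beta(T) = T^2 + 4$, which is a nonzero polynomial; hence $\alpha(T) \ne \beta(T)$ in the splitting field $K$ of $X^2 - TX - 1$ over $\bQ(T)$, and division by $\alpha(T) - \beta(T)$ makes sense in $K$. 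Define $G_n(T) := \dfrac{\alpha(T)^n - \beta(T)^n}{\alpha(T) - \beta(T)} \in K$.

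For the base cases, $G_0(T) = \dfrac{1 - 1}{\alpha(T) - \beta(T)} = 0 = F_0(T)$ and $G_1(T) = \dfrac{\alpha(T) - \beta(T)}{\alpha(T) - \beta(T)} = 1 = F_1(T)$. For the inductive step, note that $\alpha(T)$ and $\beta(T)$ are roots of $X^2 - TX - 1$, so $\alpha(T)^2 = T\alpha(T) + 1$ and $\beta(T)^2 = T\beta(T) + 1$; multiplying these by $\alpha(T)^{n-2}$ and $\beta(T)^{n-2}$ respectively gives $\alpha(T)^n = T\alpha(T)^{n-1} + \alpha(T)^{n-2}$ and the analogous identity for $\beta(T)$. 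Subtracting and dividing by $\alpha(T) - \beta(T)$ yields $G_n(T) = T\,G_{n-1}(T) + G_{n-2}(T)$, which is exactly the recurrence \eqref{eqn:fibo_poly}. By induction $G_n(T) = F_n(T)$ for all $n \ge 0$; in particular, although $G_n(T)$ was a priori only an element of $K$, it coincides with the polynomial $F_n(T) \in \bZ[T]$.

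There is no real obstacle here: the only point needing a little care is to choose a ring in which $\alpha(T), \beta(T)$ exist and $\alpha(T) - \beta(T)$ is invertible, which the discriminant computation above takes care of; the rest is a short two-line induction. As an alternative that sidesteps the splitting field, one may instead establish the symmetric-function identity $\alpha(T)^n - \beta(T)^n = (\alpha(T) - \beta(T)) \sum_{i=0}^{n-1} \alpha(T)^i \beta(T)^{n-1-i}$ and observe that the sum, being a symmetric polynomial in $\alpha(T), \beta(T)$, already lies in $\bZ[T]$ and satisfies the Fibonacci recurrence, so it equals $F_n(T)$ by the same induction.
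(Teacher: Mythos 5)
Your proof is correct. The paper gives no argument of its own for this proposition --- it simply cites Lemma 3.1 of Kitayama--Shiomi --- and your two-line induction (check $G_0 = 0$, $G_1 = 1$, and derive $G_n = TG_{n-1} + G_{n-2}$ from $\alpha^2 = T\alpha + 1$, $\beta^2 = T\beta + 1$) is exactly the standard verification. One small remark: since the paper later applies \eqref{eqn:FnBinet} over $\bF_q$, including characteristic $2$, your alternative via the symmetric-function identity $\frac{\alpha^n - \beta^n}{\alpha - \beta} = \sum_{i=0}^{n-1}\alpha^i\beta^{n-1-i} \in \bZ[T]$ is the preferable formulation, as it yields an identity of polynomials over $\bZ$ that specializes to any base ring without worrying about where $\alpha - \beta$ is invertible (note that even in characteristic $2$ one has $(\alpha-\beta)^2 = T^2 \ne 0$, so your first route also survives, but the symmetric version makes this automatic).
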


Over a characteristic $\ne 2$ ring, we can write
    \[
        \alpha(T) = \frac{T + \sqrt{T^2 + 4}}{2}, \quad \beta(T) = \frac{T - \sqrt{T^2 + 4}}{2}
    \]
    and
    \begin{equation}
    \label{eqn:FnBinet2}
        F_{n}(T) = \frac{\left(\frac{1}{2}T + \frac{1}{2}\sqrt{T^2 + 4}\right)^{n} - \left(\frac{1}{2}T - \frac{1}{2}\sqrt{T^2 + 4}\right)^{n}}{\sqrt{T^2 + 4}} \text{.}
    \end{equation}

It is known that the Fibonacci polynomial factors as \cite[p. 478]{koshy}
\begin{equation}
    \label{eqn:fibo_factor}
    F_n(T) = \prod_{k=1}^{n-1} \left(T - 2 i \cos \left(\frac{k\pi}{n}\right)\right) = \prod_{k=1}^{n-1} (T - i (\zeta_{2n}^{k} + \zeta_{2n}^{-k}))
\end{equation}
for $n \ge 2$, so the zeros of $F_n(T)$ in $\bC$ are given by
\begin{equation}
\label{eqn:fibo_zeros}
2i \cos \left(\frac{k\pi}{n}\right) = i(\zeta_{2n}^{k} + \zeta_{2n}^{-k}), \quad 1 \le k \le n - 1
\end{equation}
which are all algebraic integers.

\subsection{Lucas polynomial sequence} 
We can consider more general families of polynomials by replacing the coefficient polynomials and initial terms in \eqref{eqn:fibo_poly} by others, which are studied by Horadam \cite{Horadam1996}.

\begin{definition}[{\cite[Horadam]{Horadam1996}}]\label{lucassq}
    Let $f(T)$ and $g(T)$ be monomials of degree 0 or 1 (over some ring R).
    Define the following recurrence relation:
    \begin{equation}
       \mathcal{W}_n(T) = f(T) \mathcal{W}_{n - 1} (T) + g(T) \mathcal{W}_{n - 2}(T) \text{ for } n > 1 \text{,} \label{eqn:lucas_sequence}
    \end{equation}
    letting $\mathcal{W}_0(T)$ be a constant in $R$ and $\mathcal{W}_1(T) \in R[T]$ be a monomial of degree 0 or 1.
    
    Let $W_n(T)$ and $w_n(T)$ be polynomial sequences satisfying the recurrence relation of $\mathcal{W}_n(T)$ and the initial conditions:
    \begin{align*}
        W_0(T) = 0, &\quad W_1(T) = 1, \\
        w_0(T) = 2, &\quad w_1(T) = f(T) \text{.}
    \end{align*}
\end{definition}
These families are also called as \emph{Fibonacci-type} and \emph{Lucas-type}, respectively (which are denoted as $\mathcal{F}_n(T)$ and $\mathcal{L}_n(T)$ in \cite{florez2019resultant}). For the remainder of this section, let $f(T)$ and $g(T)$ be monomials of degree $0$ or $1$ in $\bQ[T]$ of choice (for the most of the cases, they will be in $\bZ[T]$) and let the sequences $W_n(T)$ and $w_n(T)$ be determined by $f(T)$ and $g(T)$.

Continuing, \cite{Horadam1996} observes the \emph{Binet forms}:
\begin{align}
    W_n(T) &= \frac{\alpha(T)^n - \beta(T)^n}{\Delta(T)} \label{eqn:Wnbinet} \\
    w_n(T) &= \alpha(T)^n + \beta(T)^n \label{eqn:wnbinet}
\end{align}
where
\begin{align}
    \alpha(T) &:= \frac{f(T) + \sqrt{f(T)^2 + 4 g(T)}}{2} \label{eqn:lucas_alpha} \\
    \beta(T) &:= \frac{f(T) - \sqrt{f(T)^2 + 4 g(T)}}{2} \label{eqn:lucas_beta} \\
    \Delta(T) &:= \alpha(T) - \beta(T) = \sqrt{f(T)^2 + 4 g(T)} \label{eqn:lucas_delta} \text{.}
\end{align}

When we work over characteristic 2, the equations \eqref{eqn:lucas_alpha} and \eqref{eqn:lucas_beta} need to be replaced by $\alpha(T) = f(T)$ and $\beta(T) = 0$.

This recurrence relation generalizes many of the known polynomial sequences.
For example, when $f(T) = 2T$ and $g(T) = -1$ (resp. $f(T) = 1$ and $g(T) = 2T$), the polynomial $W_n(T)$ becomes the Chebyshev polynomial of the second kind $U_{n-1}(T)$ (resp. Jacobsthal polynomial $J_n(T)$ \cite{horadam1997jacobsthal}).
When $f(T) = T$ and $g(T) = 1$, the polynomial $w_n(T)$ becomes the \emph{Lucas polynomial} $L_n(T)$, i.e.
\begin{equation}
    \label{eqn:lucas_poly}
    L_0(T) = 2, \quad L_1(T) = T, \quad L_n(T) = T L_{n-1}(T) + L_{n-2}(T)
\end{equation}
which is the polynomial analogue of the Lucas numbers.
See also \cite[Table 1]{Horadam1996} for more examples.

\section{Proof of Theorem \ref{thm:mainFn}}
\label{sec:main}

In this section, we prove Theorem \ref{thm:mainFn} and \ref{thm:mainFnp2}, completely characterizing Fibonacci polynomials over finite fields that are perfect powers. For the remainder of this paper, let $\bF_q$ denote a finite field of characteristic $p$. Also, we will mainly focus on nonconstant polynomials.

\subsection{Factorization over finite fields}

We first find the factorization of $F_n(T)$.

\begin{lemma}
\label{lem:Fpk}
    Over $\bF_q$, we have
    \begin{equation}
    \label{eqn:Fpkodd}
        F_{p^k}(T) = (T^2 + 4)^{\frac{p^k - 1}{2}}
    \end{equation}
    when \(p\) is odd, and
    \begin{equation}
    \label{eqn:Fpkeven}
        F_{2^k}(T) = T^{2^k - 1}
    \end{equation}
    when \(p = 2\).
\end{lemma}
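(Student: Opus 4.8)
The plan is to deduce both formulas from the Binet identity \eqref{eqn:FnBinet} together with the fact that the $p$-th power map is a ring endomorphism in characteristic $p$. Work over a finite field $\bF_q$ of characteristic $p$, and let $\alpha = \alpha(T)$ and $\beta = \beta(T)$ denote the two roots of $X^2 - TX - 1$ in an algebraic closure of $\bF_q(T)$, so that $\alpha + \beta = T$, $\alpha\beta = -1$, and $(\alpha - \beta)^2 = (\alpha+\beta)^2 - 4\alpha\beta = T^2 + 4$. Since the discriminant $T^2 + 4$ is a nonzero element of $\bF_q(T)$ — it equals $T^2$ when $p = 2$ — the polynomial $X^2 - TX - 1$ is separable, $\alpha \neq \beta$, and \eqref{eqn:FnBinet} is a valid identity over $\bF_q$; this can be seen either by running the recurrence \eqref{eqn:fibo_poly} directly over $\bF_q$, or by observing that $(\alpha^n - \beta^n)/(\alpha - \beta)$ is a polynomial in $\alpha+\beta$ and $\alpha\beta$ with integer coefficients and reducing modulo $p$.

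Next, specialize \eqref{eqn:FnBinet} at $n = p^k$. Because $x \mapsto x^{p^k}$ is additive on any $\bF_q$-algebra and $(-\beta)^{p^k} = -\beta^{p^k}$ in every characteristic, we obtain $\alpha^{p^k} - \beta^{p^k} = (\alpha - \beta)^{p^k}$, hence
\[
    F_{p^k}(T) \;=\; \frac{\alpha^{p^k} - \beta^{p^k}}{\alpha - \beta} \;=\; \bigl(\alpha(T) - \beta(T)\bigr)^{p^k - 1}.
\]
It remains to identify the right-hand side. For odd $p$, the exponent $p^k - 1$ is even, so $(\alpha - \beta)^{p^k-1} = \bigl((\alpha-\beta)^2\bigr)^{(p^k-1)/2} = (T^2+4)^{(p^k-1)/2}$, which is \eqref{eqn:Fpkodd} (and incidentally re-proves that this quantity is a polynomial). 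For $p = 2$ we have $-\beta = \beta$, so $\alpha - \beta = \alpha + \beta = T$ with no sign ambiguity, and therefore $(\alpha - \beta)^{2^k-1} = T^{2^k-1}$, which is \eqref{eqn:Fpkeven}.

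The computation itself is short; the one point that genuinely requires attention is the justification that the Binet formula \eqref{eqn:FnBinet} survives reduction to characteristic $p$ — especially to $p = 2$, where the discriminant degenerates to $T^2$. Once one verifies that $T^2 + 4$ remains nonzero in $\bF_q(T)$, equivalently that $X^2 - TX - 1$ stays separable, everything goes through, and the Frobenius argument handles the odd and even characteristic cases uniformly. I do not foresee any further obstacle.
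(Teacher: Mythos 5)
Your proposal is correct and follows essentially the same route as the paper: apply the Binet formula at $n = p^k$, use the Frobenius identity $\alpha^{p^k} - \beta^{p^k} = (\alpha-\beta)^{p^k}$ in characteristic $p$, and identify $(\alpha-\beta)^2 = T^2+4$ (which degenerates to $T^2$ when $p=2$). The extra care you take in justifying that the Binet identity survives reduction mod $p$ is a welcome but minor elaboration of what the paper leaves implicit.
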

\begin{proof}
    When $k = 0$, this is trivial. Assume $k \ge 1$.
    When \(p\) is odd, by \eqref{eqn:FnBinet2}
    \[
        F_{p^k}(T) = \frac{\alpha(T)^{p^k} - \beta(T)^{p^k}}{\alpha(T) - \beta(T)} = \frac{(\alpha(T) - \beta(T))^{p^k}}{\alpha(T) - \beta(T)} = (\alpha(T) - \beta(T))^{p^{k} -1} = (T^2 + 4)^{\frac{p^k -1}{2}}.
    \]
    For \(p = 2\), we still have \((\alpha(T) - \beta(T))^2 = (\alpha(T) + \beta(T))^2 = T^2\) and \(\alpha(T) - \beta(T) = T\), so by \eqref{eqn:FnBinet}
    \[
        F_{2^k}(T) = \frac{\alpha(T)^{2^k} - \beta(T)^{2^k}}{\alpha(T) - \beta(T)} = (\alpha(T) - \beta(T))^{2^k - 1} = T^{2^k - 1}.
    \]
    Note that we are working over characteristic $p$, so $(\alpha - \beta)^{p^k} = \alpha^{p^k} - \beta^{p^k}$ holds for any $k \ge 1$.
\end{proof}

\begin{lemma}
\label{lem:Fpkm}
    Write \(n = p^k m\) where \(m\) is coprime to \(p\). Then \(F_n(T)\) factors over \(\bF_q\) as
    \begin{equation}
        F_{p^k m}(T) = F_{p^k}(T) F_m(T)^{p^k}.
    \end{equation}
\end{lemma}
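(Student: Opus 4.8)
The plan is to work directly from the Binet formula \eqref{eqn:FnBinet}, exploiting the Frobenius endomorphism on the characteristic-$p$ ring $\bF_q[\alpha(T)]$ where $\alpha(T)$ is a root of $X^2 - TX - 1$. Write $\alpha = \alpha(T)$, $\beta = \beta(T)$, and $\Delta = \alpha - \beta$. First I would reduce to the case $\Delta \ne 0$: over $\bF_q$ with $p$ odd we have $\Delta^2 = T^2 + 4 \ne 0$ in $\bF_q[T]$ (it is not the zero polynomial), and for $p = 2$ we have $\Delta = T \ne 0$, so $\Delta$ is a nonzero element of the relevant ring and dividing by it is legitimate — this is exactly the regime in which \eqref{eqn:FnBinet} already makes sense.

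The key computation is the following: since we are in characteristic $p$, applying the $p^k$-th power (Frobenius iterate) is a ring homomorphism, so $\alpha^{p^k}$ and $\beta^{p^k}$ are the two roots of $X^2 - T^{p^k} X - 1$ up to the obvious Frobenius twist — but more usefully, I would simply compute
\[
F_{p^k m}(T) = \frac{\alpha^{p^k m} - \beta^{p^k m}}{\alpha - \beta} = \frac{(\alpha^m)^{p^k} - (\beta^m)^{p^k}}{\alpha - \beta} = \frac{(\alpha^m - \beta^m)^{p^k}}{\alpha - \beta},
\]
where the last equality uses that $(x - y)^{p^k} = x^{p^k} - y^{p^k}$ in characteristic $p$. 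Now multiply and divide by $(\alpha - \beta)^{p^k}$:
\[
F_{p^k m}(T) = \frac{(\alpha^m - \beta^m)^{p^k}}{(\alpha - \beta)^{p^k}} \cdot (\alpha - \beta)^{p^k - 1} = \left(\frac{\alpha^m - \beta^m}{\alpha - \beta}\right)^{p^k} (\alpha - \beta)^{p^k - 1} = F_m(T)^{p^k} \cdot (\alpha - \beta)^{p^k - 1}.
\]
Finally, Lemma \ref{lem:Fpk} (together with the identity $(\alpha - \beta)^{p^k - 1} = (T^2+4)^{(p^k-1)/2}$ for $p$ odd, resp. $= T^{p^k - 1}$ for $p = 2$, which is precisely what that lemma records) identifies $(\alpha - \beta)^{p^k - 1} = F_{p^k}(T)$, giving $F_{p^k m}(T) = F_{p^k}(T) F_m(T)^{p^k}$.

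I do not expect a real obstacle here; the only points requiring a line of care are (i) checking that $\alpha - \beta$ is a unit, or at least a nonzerodivisor, in whatever ring one chooses to carry out the manipulation, so that the cancellation is valid — the cleanest route is to perform the algebra in the field of fractions $\bF_q(T)(\sqrt{T^2+4})$ (for $p$ odd) or directly in $\bF_q(T)$ (for $p = 2$, where $\alpha = T$, $\beta = 0$), and then observe that the resulting identity, having all terms in $\bF_q[T]$, holds in $\bF_q[T]$; and (ii) the boundary/degenerate cases $m = 1$ (where $F_m = 1$ and the statement collapses to Lemma \ref{lem:Fpk}) and $k = 0$ (where it is tautological). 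Both are immediate.
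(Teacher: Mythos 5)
Your proposal is correct and follows essentially the same route as the paper: both apply the Binet formula, use the Frobenius identity $(\alpha^m-\beta^m)^{p^k}=\alpha^{p^km}-\beta^{p^km}$, multiply and divide by $(\alpha-\beta)^{p^k}$, and identify $(\alpha-\beta)^{p^k-1}$ with $F_{p^k}(T)$ via Lemma \ref{lem:Fpk}. Your added remarks on $\alpha-\beta$ being a nonzerodivisor and on the degenerate cases $m=1$, $k=0$ are sensible extra care but do not change the argument.
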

\begin{proof}
    By \eqref{eqn:FnBinet},
    \begin{align*}
        F_n(T) &= \frac{\alpha(T)^{p^k m} - \beta(T)^{p^k m}}{\alpha(T) - \beta(T)} \\
        &= \frac{(\alpha(T)^{m} - \beta(T)^m)^{p^k}}{\alpha(T) - \beta(T)} \\
        &= \frac{(\alpha(T) - \beta(T))^{p^k}}{\alpha(T) - \beta(T)}\left(\frac{\alpha(T)^m - \beta(T)^m}{\alpha(T) - \beta(T)}\right)^{p^k} \\
        &= F_{p^k}(T) F_m(T)^{p^k}.
    \end{align*}
\end{proof}

By Lemma \ref{lem:Fpk} and \ref{lem:Fpkm}, we get:

\begin{proposition}
\label{prop:Fpkm}
    Write \(n = p^k m\) where \(m\) is coprime to \(p\). Then \(F_n(T)\) factors over \(\bF_q\) as
    \begin{equation}
    \label{eqn:Fpkmfactor}
        F_{p^k m}(T) = (T^2 + 4)^{\frac{p^k - 1}{2}} F_m(T)^{p^k}.
    \end{equation}
    When $p = 2$, the above factorization becomes
    \begin{equation}
        \label{eqn:F2kmfactor}
        F_{2^k m}(T) = T^{2^k - 1} F_m(T)^{2^k}.
    \end{equation}
\end{proposition}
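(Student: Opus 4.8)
The plan is to obtain the statement by directly combining the two preceding lemmas. Writing $n = p^k m$ with $\gcd(m,p) = 1$, Lemma~\ref{lem:Fpkm} gives the factorization $F_{p^k m}(T) = F_{p^k}(T) F_m(T)^{p^k}$ over $\bF_q$, so all that remains is to substitute the closed form of $F_{p^k}(T)$ supplied by Lemma~\ref{lem:Fpk}. When $p$ is odd, Lemma~\ref{lem:Fpk} gives $F_{p^k}(T) = (T^2+4)^{(p^k-1)/2}$, which yields \eqref{eqn:Fpkmfactor}; when $p = 2$, it gives $F_{2^k}(T) = T^{2^k-1}$, which yields \eqref{eqn:F2kmfactor}. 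That is the entire argument.

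The one thing worth checking is the degenerate case $k = 0$, i.e.\ $p \nmid n$: then $p^k = 1$ and $m = n$, and the asserted identity reads $F_n(T) = (T^2+4)^{0}\,F_n(T)^{1} = F_n(T)$ (respectively $F_n(T) = T^{0} F_n(T) = F_n(T)$ for $p = 2$), so it holds vacuously, consistent with the fact that Lemmas~\ref{lem:Fpk} and~\ref{lem:Fpkm} are themselves trivial for $k = 0$. Hence no separate bookkeeping is needed for small $k$.

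There is essentially no obstacle: the proposition is a straightforward corollary, with all the substantive work already done in the Binet-formula manipulations of Lemmas~\ref{lem:Fpk} and~\ref{lem:Fpkm} (in particular the use of the Frobenius $(\alpha-\beta)^{p^k} = \alpha^{p^k} - \beta^{p^k}$ in characteristic $p$, and the identification $(\alpha-\beta)^2 = T^2+4$, resp.\ $T^2$ when $p=2$). If one preferred a self-contained derivation one could instead start from \eqref{eqn:FnBinet}, write $\alpha^{p^k m} - \beta^{p^k m} = (\alpha^m - \beta^m)^{p^k}$ by Frobenius, factor out a single $\alpha - \beta$ from the numerator, and collapse the remaining $(\alpha-\beta)^{p^k-1}$ to $(T^2+4)^{(p^k-1)/2}$ (or $T^{2^k-1}$); but invoking the two lemmas is cleaner and is the route I would take.
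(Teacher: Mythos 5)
Your proposal is correct and matches the paper exactly: the paper states this proposition as an immediate consequence of Lemmas~\ref{lem:Fpk} and~\ref{lem:Fpkm}, which is precisely the substitution you perform. Your extra check of the degenerate case $k=0$ is harmless and consistent with the paper's treatment.
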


\subsection{Square-free factor}

By Proposition \ref{prop:Fpkm}, it is enough to understand the factorization of $F_m(T)$ for $p \nmid m$.
For odd $p$, the following shows that they are square-free. 
\begin{proposition}
\label{prop:Fmsqfree}
    Assume that \(p\) is odd and \(p \nmid m\). Then \(F_m(T)\) is square-free over \(\bF_q\).
\end{proposition}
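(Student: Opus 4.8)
The plan is to prove that $F_m(T)$ and its derivative $F_m'(T)$ are coprime in $\bF_q[T]$; this forces $F_m(T)$ to be square-free, because if $\pi^2 \mid F_m$ for some irreducible $\pi$, then writing $F_m = \pi^2 h$ gives $F_m' = \pi(2\pi' h + \pi h')$, whence $\pi \mid \gcd(F_m, F_m')$. For $m \le 1$ there is nothing to prove, so assume $m \ge 2$. The two inputs I will use are identities relating $F_m$ to the Lucas polynomial $L_m(T) := \alpha(T)^m + \beta(T)^m$, where $\alpha(T),\beta(T)$ are the roots of $X^2 - TX - 1$ as in \eqref{eqn:FnBinet}; note $L_m(T)\in\bF_q[T]$, since it satisfies $L_m = T L_{m-1} + L_{m-2}$ with $L_0 = 2$, $L_1 = T$ (equivalently, $\alpha,\beta$ are integral over $\bF_q[T]$ and $L_m$ is symmetric in them).

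First I would record the derivative formula
\[
    (T^2 + 4)\,F_m'(T) = m\,L_m(T) - T\,F_m(T)
\]
together with the Pell-type identity
\[
    L_m(T)^2 - (T^2 + 4)\,F_m(T)^2 = 4(-1)^m .
\]
The second is immediate from the Binet forms: $L_m^2 - (\alpha-\beta)^2F_m^2 = (\alpha^m+\beta^m)^2 - (\alpha^m-\beta^m)^2 = 4(\alpha\beta)^m = 4(-1)^m$, using $(\alpha-\beta)^2 = T^2+4$ and $\alpha\beta = -1$. The derivative formula is the only real computation: differentiating \eqref{eqn:FnBinet2} and using $\tfrac{d}{dT}\sqrt{T^2+4} = T/\sqrt{T^2+4}$ (legitimate since $\cha\bF_q \ne 2$) gives $\alpha' = \alpha/(\alpha-\beta)$ and $\beta' = -\beta/(\alpha-\beta)$, hence $\tfrac{d}{dT}(\alpha^m-\beta^m) = mL_m/(\alpha-\beta)$ and $\tfrac{d}{dT}(\alpha-\beta) = T/(\alpha-\beta)$, and the quotient rule yields the claim; alternatively one can induct on the recurrences for $F_m$ and $L_m$. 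I expect this identity to be the main technical step, though it is elementary.

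With these in hand, suppose $\gcd(F_m(T), F_m'(T)) \ne 1$. Being a nonconstant polynomial over $\bF_q$, this gcd has a root $\theta \in \overline{\bF_q}$, so $F_m(\theta) = F_m'(\theta) = 0$. Evaluating the derivative formula at $\theta$ gives $0 = m\,L_m(\theta)$; since $p \nmid m$ we have $m \ne 0$ in $\bF_q$, hence $L_m(\theta) = 0$. Then evaluating the Pell-type identity at $\theta$ yields $0 = 4(-1)^m$, contradicting the fact that $\cha\bF_q = p$ is odd. Therefore $\gcd(F_m, F_m') = 1$, and $F_m(T)$ is square-free. (As a byproduct one also gets $F_m'(T)\ne 0$ and $\gcd(F_m(T), L_m(T)) = 1$ over $\bF_q$, the latter directly from the Pell-type identity since $4(-1)^m$ is a nonzero constant.)
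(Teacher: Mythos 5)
Your proof is correct, and it takes a genuinely different route from both of the paper's arguments. The paper gives two proofs: the first simply quotes the discriminant formula $\mathrm{Disc}(F_m) = (-1)^{(m-1)(m-2)/2}2^{m-1}m^{m-3}$ from Florez--Higuita--Ramirez and observes it is nonzero when $p\nmid 2m$; the second works in characteristic zero with the explicit roots $i(\zeta_{2m}^k+\zeta_{2m}^{-k})$ of $F_m$ and shows they remain pairwise distinct in $\overline{\bF}_p$ via the lemma that $\zeta_{2m}^k\not\equiv 1\pmod p$ for $0<|k|<m$. You instead use the $\gcd(F_m,F_m')$ criterion together with the two classical identities $(T^2+4)F_m' = mL_m - TF_m$ and $L_m^2-(T^2+4)F_m^2=4(-1)^m$; both check out (e.g.\ for $m=3$: $(T^2+4)\cdot 2T = 3(T^3+3T)-T(T^2+1)$), and the evaluation argument at a hypothetical common root $\theta$ is airtight — note it does not even require $\theta^2+4\ne 0$, and it correctly covers the degenerate case $F_m'\equiv 0$ since then the gcd is $F_m$ itself. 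Your approach buys self-containedness: no appeal to an external discriminant formula and no need for integrality of the complex roots or reduction mod $p$; it is essentially a direct proof of the nonvanishing of the discriminant, and it adapts to the generalized sequences $W_n$ in Section 4 just as well (the analogous Pell-type identity is $w_m^2-(f^2+4g)W_m^2=4(-g)^m$, which is why the hypothesis $p\nmid g$ appears there). The only point worth making explicit in a write-up is that the derivative identity, derived formally via $\sqrt{T^2+4}$, is an identity in $\bZ[T]$ and hence can be reduced mod $p$ (or proved by induction on the recurrences, as you note), so there is no issue with differentiating in the quadratic extension in positive characteristic.
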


We provide two different proofs of Proposition \ref{prop:Fmsqfree}.
The first proof is based on the discriminant formula of Florez--Higuita--Ramirez \cite{florez2019resultant}.

\begin{proof}[First proof of Proposition \ref{prop:Fmsqfree}]
    Recall that a polynomial over a field has no multiple root in an algebraic closure if and only if the distriminant is nonzero.
    The discriminant of \(F_m(T)\) is given by \cite[Theorem 4]{florez2019resultant}
    \[
        \mathrm{Disc}(F_m) = (-1)^{\frac{(m-2)(m-1)}{2}} 2^{m-1} m^{m-3}
    \]
    and it is nonzero in \(\bF_q\) when \(2 \ne p \nmid m\), so \(F_m(T)\) is square-free.
\end{proof}

The second proof directly shows that the zeros \eqref{eqn:fibo_zeros} are all distinct modulo $p$ (i.e. distinct in $\overline{\mathbb{F}}_p$).

\begin{lemma} \label{distinct}
Let $p$ be an odd prime and let $n > 1$ and $p \nmid n$. For $k \in \mathbb{Z}$ with $0 < |k| < n$,
$$\zeta_{2n}^k \not \equiv 1 \pmod p,$$
i.e. $\zeta_{2n}^k - 1$ is nonzero in $\overline{\bF}_p$.
\end{lemma}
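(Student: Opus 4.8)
The plan is to deduce the lemma from the separability of $X^{2n}-1$ over $\bF_p$. The first observation is that, since $p$ is odd and $p \nmid n$, we have $p \nmid 2n$; hence $X^{2n}-1 \in \bF_p[X]$ is separable, because its derivative $2nX^{2n-1}$ is a nonzero polynomial whose only root is $0$, which is not a root of $X^{2n}-1$. Therefore $X^{2n}-1$ has exactly $2n$ distinct roots in $\overline{\bF}_p$.

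Next I would fix a prime $\mathfrak{p}$ of $\overline{\bZ}$ above $p$ and work with the reduction homomorphism $\overline{\bZ} \to \overline{\bZ}/\mathfrak{p} \cong \overline{\bF}_p$; this is the map hidden in the notation $\pmod p$, and it is what makes sense of $\zeta_{2n}^k - 1$ as an element of $\overline{\bF}_p$. Taking $\zeta_{2n}$ to be a primitive $2n$-th root of unity, the $2n$ pairwise distinct elements $1, \zeta_{2n}, \dots, \zeta_{2n}^{2n-1}$ all map to roots of $X^{2n}-1$ in $\overline{\bF}_p$; since there are $2n$ of them and $X^{2n}-1$ has only $2n$ roots there, the reduction map is injective on the group of $2n$-th roots of unity. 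In particular, for every $k$ with $0 < k < 2n$ the image of $\zeta_{2n}^k$ differs from that of $\zeta_{2n}^0 = 1$. Finally, if $0 < |k| < n$ then $0 < |k| < 2n$, and $\zeta_{2n}^k \equiv 1$ if and only if $\zeta_{2n}^{-k} \equiv 1$ (they are mutually inverse units), so replacing $k$ by $|k|$ reduces to the case already treated; this gives $\zeta_{2n}^k \not\equiv 1 \pmod p$.

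An alternative I might record instead avoids choosing a prime: evaluating the identity $1 + X + \dots + X^{2n-1} = \prod_{j=1}^{2n-1}(X - \zeta_{2n}^j)$ at $X = 1$ yields $\prod_{j=1}^{2n-1}(1 - \zeta_{2n}^j) = 2n$, so each $1 - \zeta_{2n}^k$ with $1 \le k \le 2n-1$ divides $2n$ in $\overline{\bZ}$; were it zero modulo $p$, then $p \mid 2n$, contradicting $p \nmid 2n$. Either way, the only step needing care is the implication ``$p$ odd and $p \nmid n$ $\Rightarrow$ $p \nmid 2n$'', after which everything reduces to the standard fact that roots of unity of order coprime to $p$ stay distinct — and in particular nontrivial — modulo $p$. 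I do not expect a genuine obstacle here.
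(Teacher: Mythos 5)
Your proposal is correct, and the ``alternative'' you record at the end --- evaluating $1+X+\cdots+X^{2n-1}=(X^{2n}-1)/(X-1)=\prod_{j=1}^{2n-1}(X-\zeta_{2n}^j)$ at $X=1$ to get $2n$, so that $\zeta_{2n}^k\equiv 1$ would force $p\mid 2n$ --- is exactly the paper's proof. One small caveat on your first route: the step ``$2n$ distinct roots of unity map into the set of $2n$ roots of $X^{2n}-1$, hence the map is injective'' is not a valid counting argument on its own (a map between two $2n$-element sets need not be injective); to make it airtight you should reduce the factorization $X^{2n}-1=\prod_{j}(X-\zeta_{2n}^j)$ modulo $\mathfrak{p}$ and use separability of $X^{2n}-1$ over $\bF_p$ to conclude the reduced roots are pairwise distinct. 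Since your second argument is complete and coincides with the paper's, there is no genuine gap.
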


\begin{proof}
Let:
$$f(T) := \prod_{\substack{-n < k \leq n \\ k \neq 0, \text{ } k \in \mathbb{Z}}} (T - \zeta_{2n}^k) \text{.}$$

When $k = n$, we have $\zeta_{2n}^n = -1 \not \equiv 1 \pmod p$. Suppose $\zeta_{2n}^k \equiv 1 \pmod p$ for some $k$ such that $0 < |k| < n$, then $f(1)\equiv 0\pmod p$.
Notice since:
    \begin{align*}
        T^{2n}-1 &= \prod_{\substack{-n < k \le n \\ k \in \mathbb{Z}}}(T-\zeta_{2n}^k)
        = (T - 1) f(T) \text{,}
    \end{align*}
we have:
\begin{align*}
    f(T) &= \frac{T^{2n} - 1}{T - 1} = \underbrace{T^{2n - 1} + T^{2n - 2} + \dots + T + 1}_{2n \text{ terms}} \text{.}
\end{align*}
From this we get $f(1) = 2n$ and $f(1) \equiv 0 \pmod{p}$, contradicting $p \nmid n$.
\end{proof}

\begin{proof}[Second proof of Proposition \ref{prop:Fmsqfree}]
    If $m = 1$, then $F_1(T) = 1$ is square-free by definition. Assume $m > 1$.
    Suppose $F_m(T)$ is not square-free, so that it has a multiple zero in $\overline{\bF}_p$.
    By \eqref{eqn:fibo_zeros}, there exist distinct $k, l \in \{1, 2, \dots, m - 1\}$ such that $\zeta_{2m}^{k} + \zeta_{2m}^{-k} = \zeta_{2m}^{l} + \zeta_{2m}^{-l}$ in $\overline{\bF}_p$.
    Then $(\zeta_{2m}^{k} - \zeta_{2m}^{l})(\zeta_{2m}^{k + l} - 1) = 0$, hence $\zeta_{2m}^{k} = \zeta_{2m}^{l}$ or $\zeta_{2m}^{k} = \zeta_{2m}^{-l}$.
    By Lemma \ref{distinct}, we have $k \equiv l \pmod{2m}$ or $k \equiv -l \pmod{2m}$, which are both impossible.
\end{proof}

Next, we show that $F_m(T)$ for $p \nmid m$ is always coprime to \(T^2 + 4\), hence $(T^2 + 4)^{\frac{p^k - 1}{2}}$ and $F_m(T)^{p^k}$ in \eqref{eqn:Fpkmfactor} do not share common factors. Knowing that the recurrence relation of the Fibonacci polynomials is \textit{Fibonacci-type}, we can apply the following lemma.

\begin{lemma}[{\cite[Lemma 19]{florez2019resultant}}]
\label{lem:florez_mod}
    For $n \ge 0$, we have
    \begin{equation}
        W_n \pmod{f^2 + 4g} = \begin{cases}
            n (-g)^{(n-1)/2} & 2 \nmid n \\
            (-1)^{(n+2)/2} (nfg^{(n-2)/2}) / 2 & 2 \mid n
        \end{cases}
        ,
    \end{equation}
    where $f$ and $g$ are the coefficient polynomials in \eqref{eqn:lucas_sequence}.
\end{lemma}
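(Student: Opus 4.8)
The plan is to prove the congruence by induction on $n$, using the defining recurrence $W_n = f W_{n-1} + g W_{n-2}$ directly and exploiting the single relation $f^2 \equiv -4g \pmod{f^2+4g}$. The shape of the answer is predicted by the Binet form \eqref{eqn:Wnbinet}: modulo $f^2+4g$ the element $\Delta = \alpha - \beta = \sqrt{f^2+4g}$ satisfies $\Delta^2 \equiv 0$, so when one subtracts the binomial expansions of $\alpha^n = 2^{-n}(f+\Delta)^n$ and $\beta^n = 2^{-n}(f-\Delta)^n$, only the term linear in $\Delta$ survives, giving $W_n = (\alpha^n-\beta^n)/\Delta \equiv n f^{n-1}/2^{n-1}$; replacing $f^2$ by $-4g$ and splitting on the parity of $n-1$ then produces exactly the two displayed cases. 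This heuristic can be made rigorous over $R[\tfrac12][T]$ via the polynomial identity $2^{n-1}W_n = \sum_{j\ge 0}\binom{n}{2j+1} f^{\,n-1-2j}(f^2+4g)^j$ (whose right side lies in $R[T]$ since it equals $2^{n-1}$ times the integer-coefficient polynomial $W_n$), after which one reduces modulo $f^2+4g$ and keeps only the $j=0$ term. Since the statement is over an arbitrary ring, however, I would instead carry out the induction, which never divides by anything that is not already an even integer.

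First I would check the base cases $n=0,1,2$: $W_0 = 0$, $W_1 = 1 = 1\cdot(-g)^0$, and $W_2 = f = (-1)^{2}\cdot\tfrac22\cdot f\cdot g^0$, all matching the formula. (Treating $n\le 2$ as base cases also disposes of the only ambiguity in the statement, that for $n=0$ the ``even'' expression formally contains $g^{(n-2)/2}=g^{-1}$; this is harmless since it is multiplied by $n/2 = 0$.) For the inductive step with $n\ge 3$, note that $n-1$ and $n-2$ have opposite parity, so the two cases feed into each other. If $n$ is odd, I substitute the ``even'' formula for $W_{n-1}$ and the ``odd'' formula for $W_{n-2}$ into $W_n = fW_{n-1}+gW_{n-2}$, use $f^2\equiv -4g$ to rewrite $fW_{n-1}$ as a multiple of $g^{(n-1)/2}$, and collect terms; the integer coefficient telescopes as $2(n-1)-(n-2)=n$, yielding $W_n \equiv n(-g)^{(n-1)/2}$. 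If $n$ is even, the symmetric manipulation gives coefficient $\tfrac{2(n-1)-(n-2)}{2}=\tfrac n2$, yielding $W_n\equiv (-1)^{(n+2)/2}\tfrac n2\,fg^{(n-2)/2}$. The points needing care are the bookkeeping of the signs $(-1)^{(n\pm1)/2}$ and $(-1)^{(n\pm2)/2}$ (each differing by an even shift from $(-1)^{(n-1)/2}$ or $(-1)^{(n-2)/2}$), and the observation that every division by $2$ occurring in the computation is the division of an even integer, so the argument is valid over any commutative ring — in particular in characteristic $2$, where $f^2+4g = f^2$.

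The main obstacle is essentially cosmetic: keeping the two interlocking parity cases and their sign conventions straight, and confirming that no step secretly inverts $2$. There is no deeper difficulty — the entire content is the relation $f^2\equiv -4g$ together with the elementary telescoping of binomial-type coefficients.
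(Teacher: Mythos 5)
Your proof is correct, but note that the paper does not actually prove this lemma: it is imported verbatim as \cite[Lemma 19]{florez2019resultant}, so there is no internal argument to compare against. Your induction is a legitimate self-contained verification, and I checked the two interlocking steps: for odd $n$, substituting the inductive formulas into $W_n = fW_{n-1}+gW_{n-2}$ and using $f^2\equiv -4g$ gives $\bigl(2(n-1)-(n-2)\bigr)(-g)^{(n-1)/2}=n(-g)^{(n-1)/2}$, and for even $n$ the coefficient $\tfrac{2(n-1)-(n-2)}{2}=\tfrac n2$ with sign $(-1)^{(n+2)/2}$ comes out as claimed; the base cases $n=0,1,2$ match, and your remark that the $n=0$ ``even'' expression is $0\cdot g^{-1}$ and hence harmless is the right way to dispose of that edge case. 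Your observation that every division by $2$ is a division of an even integer (the $-4g/2=-2g$ in the odd step, and $n/2$, $(n-2)/2$ in the even step) is a genuine bonus: it shows the congruence holds over an arbitrary commutative ring, including characteristic $2$, whereas the paper only ever invokes the lemma (via Corollary \ref{cor:alpha} and Proposition \ref{prop:Wmcoprime}) in odd characteristic and handles $p=2$ by separate ad hoc arguments. The Binet-form heuristic you give as motivation is exactly the right explanation for why only the term linear in $\Delta$ survives, but you are right that it requires $2$ invertible, so the inductive version is the one to keep.
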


\begin{corollary}\label{cor:alpha}
    Let $p$ be an odd prime. We have
    \begin{equation}
        \label{eqn:Fnalpha}
        F_n(T) \pmod{T^2 + 4} = \begin{cases}
            (-1)^{\frac{n-1}{2}}n & 2 \nmid n \\
            (-1)^{\frac{n}{2} + 1} \frac{nT}{2} & 2 \mid n
        \end{cases}
    \end{equation}
    over a field of characteristic $p$.
\end{corollary}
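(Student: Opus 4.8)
The plan is to deduce Corollary~\ref{cor:alpha} directly by specializing Lemma~\ref{lem:florez_mod} to the Fibonacci case. First I would record that $F_n(T)$ is exactly the Fibonacci-type sequence $W_n(T)$ of Definition~\ref{lucassq} attached to the monomials $f(T)=T$ and $g(T)=1$: both sequences satisfy $W_n = TW_{n-1}+W_{n-2}$ together with $W_0=0$ and $W_1=1$, so $W_n(T)=F_n(T)$ for all $n\ge 0$ by induction. For this choice of $f,g$ one has $f(T)^2+4g(T)=T^2+4$, so Lemma~\ref{lem:florez_mod} literally computes $F_n(T)\bmod (T^2+4)$.

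Next I would substitute $g=1$ into the two cases of the lemma. For odd $n$, $(-g)^{(n-1)/2}=(-1)^{(n-1)/2}$, giving $F_n(T)\equiv (-1)^{(n-1)/2}n$. For even $n$, $g^{(n-2)/2}=1$ and $(-1)^{(n+2)/2}=(-1)^{n/2+1}$, so $F_n(T)\equiv (-1)^{n/2+1}nT/2$; these are exactly the two cases of \eqref{eqn:Fnalpha}.

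Finally I would pass from $\bZ[T]$ to $\bF_q[T]$. Reduction modulo $p$ is a ring homomorphism $\bZ[T]\to\bF_q[T]$ that respects the defining recurrence and initial conditions of $F_n$ and fixes $T^2+4$, so the congruence of Lemma~\ref{lem:florez_mod} descends to characteristic $p$. The only point worth noting is that the even case involves a division by $2$, which is harmless because $p$ is odd and hence $2$ is a unit in $\bF_q$; the expression $nT/2$ is to be read as $n\,2^{-1}T\in\bF_q[T]$. I do not expect any genuine obstacle here, since all the content is packaged in the cited Lemma~\ref{lem:florez_mod}; should one wish to make the argument self-contained, the same identity can be reconstructed by expanding the Binet formula \eqref{eqn:FnBinet2} modulo $\bigl(\sqrt{T^2+4}\bigr)^2=T^2+4$ and retaining only the term linear in $\sqrt{T^2+4}$, which is the only step requiring a (routine) computation.
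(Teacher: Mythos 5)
Your proposal is correct and matches the paper's own proof, which likewise obtains the corollary by specializing Lemma~\ref{lem:florez_mod} to $f(T)=T$, $g(T)=1$ and noting that $2$ is invertible in odd characteristic. The extra details you supply (the substitution of $g=1$ and the reduction to $\bF_q$) are routine and consistent with what the paper leaves implicit.
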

\begin{proof}
    This is a special case of Lemma \ref{lem:florez_mod}, when $f(T) = T$ and $g(T) = 1$.
    Note that 2 is invertible in a field of odd characteristic.
\end{proof}

\begin{corollary}\label{cor:coprime}
     Suppose $p \nmid m$. Then, $T^2 + 4$ and $F_m(T)$ are coprime in $\mathbb{F}_q[T]$.
\end{corollary}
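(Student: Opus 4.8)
The plan is to reduce $F_m(T)$ modulo $T^2 + 4$ and invoke Corollary \ref{cor:alpha}: in each parity case the residue turns out to be either a nonzero constant or a nonzero constant times $T$, and either way the Euclidean algorithm shows the gcd with $T^2+4$ is trivial. Recall that over any field $\gcd(T^2+4, F_m(T)) = \gcd\bigl(T^2+4,\, F_m(T) \bmod (T^2+4)\bigr)$, so it suffices to understand this residue.

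First I would dispose of the case $2 \nmid m$. By Corollary \ref{cor:alpha}, $F_m(T) \equiv (-1)^{(m-1)/2} m \pmod{T^2+4}$; since $p \nmid m$ this is a unit of $\mathbb{F}_q[T]$, so the gcd equals $1$. For $2 \mid m$, Corollary \ref{cor:alpha} gives $F_m(T) \equiv (-1)^{m/2+1} \tfrac{m}{2}\, T \pmod{T^2+4}$, and since $p$ is odd with $p \nmid m$ the scalar $(-1)^{m/2+1} \tfrac{m}{2}$ is a unit, hence $\gcd(T^2+4, F_m(T)) = \gcd(T^2+4, T)$. One further Euclidean step, $T^2 + 4 = T\cdot T + 4$, reduces this to $\gcd(T, 4) = 1$ because $4 \neq 0$ in characteristic $\ne 2$. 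This proves coprimality.

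There is essentially no real obstacle; the whole computation rests on Corollary \ref{cor:alpha} (itself a specialization of Lemma \ref{lem:florez_mod}). The one point requiring a little care is that $T^2+4$ need not be irreducible over $\mathbb{F}_q$, so it is \emph{not} enough to check that $T^2+4 \nmid F_m(T)$ — one must carry out the gcd reductions above. The hypotheses that $p$ is odd and $p \nmid m$ are used precisely to guarantee that the constants $m$ and $m/2$ appearing in the residues are invertible.
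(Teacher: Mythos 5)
Your argument for odd $p$ is correct and rests on exactly the same ingredient as the paper's proof, namely the reduction of $F_m$ modulo $T^2+4$ from Corollary \ref{cor:alpha}. The only cosmetic difference is that the paper phrases the conclusion via a hypothetical common root $\alpha \in \bF_{q^2}$ with $\alpha^2 = -4$ (so $\alpha \neq 0$), whereas you run the Euclidean algorithm on the residues; these are interchangeable, and your extra step $\gcd(T^2+4, T) = \gcd(T,4) = 1$ is the same observation as the paper's implicit use of $\alpha \neq 0$. Your remark that one cannot simply check $T^2+4 \nmid F_m$ because $T^2+4$ may be reducible is a fair point of care, though the paper's root-based phrasing sidesteps it automatically.

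There is, however, one genuine omission: the statement places no parity restriction on $p$, only $p \nmid m$, and the paper's proof explicitly treats $p = 2$ as a separate case. Your argument cannot cover $p = 2$, because Corollary \ref{cor:alpha} is stated only for odd $p$ (its even-index formula involves division by $2$), and your final Euclidean step invokes $4 \neq 0$, which fails in characteristic $2$. For $p = 2$ one has $T^2 + 4 = T^2$, and the needed fact is that $T \nmid F_m(T)$ for odd $m$; the paper gets this from the recurrence, since $F_n(0) = F_{n-2}(0)$ for $n \ge 2$ forces $F_m(0) = F_1(0) = 1$ for all odd $m$. Adding that two-line case would complete your proof.
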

\begin{proof}
    Assume $p$ is an odd prime.
    Suppose $T^2 + 4$ and $F_m(T)$ in $\mathbb{F}_q[T]$ share a factor. Then, there must exist some common zero of the polynomial, which we will denote as $\alpha \in \bF_{q^2}$.
    Then \(\alpha^2 + 4 = 0\) and \(F_m(\alpha) = 0\).
    However, such \(\alpha\) cannot exist by Corollary \ref{cor:alpha}; if \(m\) is even, then \(F_m(\alpha) = (-1)^{\frac{m}{2}+1} \frac{m}{2} \alpha\) and it is nonzero since \(m \ne 0\) in \(\bF_q\), and the case where \(m\) is odd follows similarly.

    For \(p = 2\), \(F_{n}(0) = F_{n-2}(0)\) for all \(n \ge 2\), so \(F_m(0) = F_1(0) = 1\) for all odd \(m\). Hence \(T^2 + 4 = T^2\) and \(F_m(T)\) are coprime for odd \(m\).
\end{proof}

\begin{proposition}
    \label{prop:Fmp2sq}
    Let \(p = 2\). For $n > 0$, \(F_n(T)\) is square in \(\bF_q[T]\) if and only if \(n\) is odd.
\end{proposition}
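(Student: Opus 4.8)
The plan is to treat the two implications separately, using the explicit factorization of Proposition \ref{prop:Fpkm} for one and a classical Fibonacci identity reduced mod $2$ for the other.

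For the direction ``$F_n(T)$ a square $\Rightarrow n$ odd'', I argue by contrapositive: assume $n$ is even and write $n = 2^k m$ with $k \ge 1$ and $m$ odd. By \eqref{eqn:F2kmfactor} we have
\[
    F_n(T) = T^{2^k - 1} F_m(T)^{2^k}.
\]
Since $m$ is odd, Corollary \ref{cor:coprime} (applied with $p = 2$, where $T^2 + 4 = T^2$) tells us that $T$ and $F_m(T)$ are coprime in $\bF_q[T]$; equivalently $F_m(0) = F_1(0) = 1 \ne 0$. Hence the multiplicity of the irreducible $T$ in $F_n(T)$ is exactly $2^k - 1$, which is odd. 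A nonzero perfect square in $\bF_q[T]$ has every irreducible factor occurring to an even multiplicity, so $F_n(T)$ is not a square. (This direction uses only the trivial half of the characterization of squares in $\bF_q[T]$; one does not even need that $\bF_q$ is perfect here.)

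For the direction ``$n$ odd $\Rightarrow F_n(T)$ a square'', write $n = 2k+1$ and invoke the polynomial analogue of the classical identity $F_{2k+1} = F_k^2 + F_{k+1}^2$, i.e.
\[
    F_{2k+1}(T) = F_k(T)^2 + F_{k+1}(T)^2 \qquad \text{in } \bZ[T],
\]
which follows either from a two-term induction on $k$ via the recurrence \eqref{eqn:fibo_poly}, or from Binet's formula \eqref{eqn:FnBinet} using $\alpha(T)\beta(T) = -1$ (so that $\alpha^{2k+1}\beta + \alpha\beta^{2k+1} = \alpha^{2k} + \beta^{2k}$). Reducing modulo $2$ and using that $x \mapsto x^2$ is additive in characteristic $2$, we obtain
\[
    F_n(T) = F_k(T)^2 + F_{k+1}(T)^2 = \left( F_{(n-1)/2}(T) + F_{(n+1)/2}(T) \right)^2 \quad \text{in } \bF_q[T],
\]
which exhibits $F_n(T)$ as a perfect square.

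There is no serious obstacle here once Proposition \ref{prop:Fpkm} and Corollary \ref{cor:coprime} are in hand; the only points deserving a line of care are (i) justifying the squaring identity $F_{2k+1} = F_k^2 + F_{k+1}^2$ for the polynomial sequence — equivalently, that $F_n(T)$ involves only even powers of $T$ when $n$ is odd — and (ii) making sure Corollary \ref{cor:coprime} is used so that the exponent of $T$ in $F_n(T)$ is exactly $2^k-1$ (and not accidentally larger), which is what forces the odd multiplicity.
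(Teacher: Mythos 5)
Your proof is correct and follows essentially the same route as the paper, whose argument rests on the characteristic-$2$ identities $F_{2k}(T) = T\,F_k(T)^2$ and $F_{2k+1}(T) = (F_{k+1}(T)+F_k(T))^2$ from Kitayama--Shiomi; your ``if'' direction reproduces the second identity (via the integral identity $F_{2k+1}=F_k^2+F_{k+1}^2$ and Frobenius), and your ``only if'' direction makes explicit, via \eqref{eqn:F2kmfactor} and Corollary \ref{cor:coprime}, the odd multiplicity of $T$ that the paper leaves implicit. The only blemish is the parenthetical $\alpha^{2k+1}\beta + \alpha\beta^{2k+1} = \alpha^{2k}+\beta^{2k}$, which should carry a minus sign since $\alpha\beta=-1$ (harmless here, and irrelevant in characteristic $2$).
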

\begin{proof}
    This immediately follows from \cite[Lemma 3.5]{kitayama2017irreducibility}; we have \(F_{2k}(T) = T F_k(T)^2\) and \(F_{2k+1}(T) = (F_{k+1}(T) + F_k(T))^2\), which can be proven by \eqref{eqn:FnBinet}.
\end{proof}

Finally, before proceeding to the main theorem, we note the following result.

\begin{theorem}
    \label{thm:Fnnonpower}
    Let $p$ be an odd prime number. If $j > 1$ and $p \mid j$, then there exists no $n$ such that $F_n(T)$ is a perfect $j$-th power in $\mathbb{F}_q[T]$.
\end{theorem}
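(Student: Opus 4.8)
The plan is to pass to the explicit factorization of $F_n(T)$ over $\bF_q$ and then exhibit a single irreducible factor whose multiplicity fails to be divisible by $j$. Write $n = p^k m$ with $\gcd(m,p) = 1$, and assume $F_n(T)$ is nonconstant, i.e.\ $n \ge 2$; the case $n = 1$ only produces the constant $F_1(T) = 1$, which lies outside the nonconstant regime we focus on. By Proposition \ref{prop:Fpkm},
\[
F_n(T) = (T^2+4)^{\frac{p^k-1}{2}}\, F_m(T)^{p^k}
\]
in $\bF_q[T]$, and I would split into the cases $k = 0$ and $k \ge 1$.

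If $k = 0$, then $F_n(T) = F_m(T)$ with $p \nmid m$, which is nonconstant and square-free by Proposition \ref{prop:Fmsqfree}; hence some irreducible factor occurs in $F_n(T)$ with multiplicity exactly $1$, which is not a multiple of $j$ since $j > 1$, so $F_n(T)$ is not a perfect $j$-th power. If $k \ge 1$, I would work with $T^2 + 4$ instead: it is nonconstant, and it is square-free over $\bF_q$ since its discriminant $-16$ is a unit in odd characteristic, so I may pick an irreducible factor $\rho(T)$ of $T^2 + 4$. By Corollary \ref{cor:coprime}, $T^2 + 4$ and $F_m(T)$ are coprime, so $\rho \nmid F_m$; consequently the multiplicity of $\rho$ in $F_n(T)$ is exactly $\frac{p^k-1}{2}$, which is a positive integer because $p$ is odd and $k \ge 1$. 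If $F_n(T) = h(T)^j$, then $j$ divides $\frac{p^k-1}{2}$; since $p \mid j$ and $2$ is invertible modulo $p$, this forces $p \mid p^k - 1$, contradicting $p \mid p^k$.

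I do not expect a real obstacle here: the argument is short and purely a matter of bookkeeping with multiplicities. The only two points deserving care are the degenerate constant case $n = 1$, and the need to compute the multiplicity of the chosen irreducible factor \emph{exactly} rather than as a lower bound — which is precisely what square-freeness (Proposition \ref{prop:Fmsqfree} for $F_m$, together with the discriminant computation for $T^2 + 4$) and the coprimality of $T^2+4$ and $F_m$ (Corollary \ref{cor:coprime}) supply.
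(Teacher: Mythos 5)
Your proposal is correct and follows essentially the same route as the paper's proof: decompose $n = p^k m$, dispose of $k=0$ via the square-freeness of $F_m$ (Proposition \ref{prop:Fmsqfree}), and for $k \ge 1$ use the factorization of Proposition \ref{prop:Fpkm} together with the coprimality of $T^2+4$ and $F_m$ (Corollary \ref{cor:coprime}) to force $j \mid \frac{p^k-1}{2}$, contradicting $p \mid j$. Your extra care in noting that $T^2+4$ is itself square-free (so the multiplicity of $\rho$ is exactly $\frac{p^k-1}{2}$) and in flagging the constant case $n=1$ only makes explicit what the paper leaves implicit.
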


\begin{proof}
    Suppose $j > 1$ and $p \mid j$. Assume there exists some $n = p^km$ with $k \geq 0$ and $p \nmid m$ such that $F_n(T)$ is a perfect $j$-th power in $\mathbb{F}_q[T]$. 
    If $k = 0$, then $p \nmid m$ and $F_n(T) = F_m(T)$ is square-free by Proposition \ref{prop:Fmsqfree}, which cannot be a perfect $j$-th power for $j > 1$, so $k \ge 1$.
    By Lemma \ref{lem:Fpkm}, we have $F_{p^k m}(T) = (T^2 + 4)^{\frac{p^k - 1}{2}} F_m(T)^{p^k}$, where $F_m(T)$ is coprime to $T^2 + 4$ by Corollary \ref{cor:coprime}.
    Then $j$ should divide the exponent $\frac{p^k - 1}{2}$, which contradicts to $p \mid j$.
\end{proof}

\subsection{Proof of the main theorem}

Now we are ready to prove Theorem \ref{thm:mainFn}.

\begin{proof}[Proof of Theorem \ref{thm:mainFn}]
    $(\Leftarrow)$ Let $k \in \mathbb{N}$ be arbitrary. By Lemma \ref{lem:Fpk}, $F_{p^{ak}}(T) = (T^2 +4)^{\frac{p^{ak} - 1}{2}}$, and by our assumption, $p^a \equiv 1 \pmod {2j}$ and thus $\frac{p^{ak} - 1}{2} \equiv 0 \pmod j$. Therefore, $F_n(T)$ where $n = p^{ak}$ is a perfect $j$-th power.
    
    $(\Rightarrow)$ Suppose $F_n(T)$ is a perfect $j$-th power in $\mathbb{F}_q[T]$ and write $n = p^{ak + b}m$, where $p$ and $m$ are coprime (so $p \nmid m$), $k \geq 0$, and \(0 \le b < a\). By Proposition \ref{prop:Fpkm}, we have
    \[
        F_{p^{ak + b}m}(T) = (T^2 + 4)^\frac{p^{ak + b} - 1}{2} \left(F_m(T)\right)^{p^{ak + b}} \text{.}
    \]
    Suppose $b > 0$. Then, $p^{ak + b} = p^{ak}p^b \equiv p^b \pmod{2j}$, where $p^b \not \equiv 1 \pmod{2j}$ by the assumption that $0 < b < a$. This implies that $\frac{p^{ak + b} - 1}{2} \not \equiv 0 \pmod j$, so $(T^2 + 4)^\frac{p^{ak + b} - 1}{2}$ is not a perfect $j$-th power.
    Since \(T^2 + 4\) and \(F_m(T)\) are coprime by Corollary \ref{cor:coprime}, we get a contradiction and we should have \(b = 0\).
    We can rewrite \(F_n(T)\) as
    \[
        F_{p^{ak}m}(T) = (T^2 + 4)^\frac{p^{ak} - 1}{2} \left(F_m(T)\right)^{p^{ak}} \text{,}
    \]
    where \((T^2 + 4)^{\frac{p^{ak} - 1}{2}}\) is a perfect \(j\)-th power and \(F_m(T)\) is square-free by Proposition \ref{prop:Fmsqfree}. If \(m \ne 1\), then \(F_m(T) \ne 1\) and by Corollary \ref{cor:coprime}, the exponent \(p^{ak}\) needs to be a multiple of \(j > 1\), which contradicts to \( \gcd(p, 2j) = 1\).
    This implies \(m = 1\) and we have \(n = p^{ak}\).
\end{proof}

When \(p = 2\), we already showed that \(F_n(T)\) for $n > 0$ is square in \(\bF_q[T]\) if and only if \(n\) is odd. We can further determine higher powers in \(\bF_q[T]\).
\begin{theorem}
\label{thm:mainFnp2}
    Let \(j > 2\) be an odd number and \(a = \mathrm{ord}_j(2)\) be the order of \(2\) in \((\bZ/j\bZ)^\times\).
    Let $q$ be a power of $2$.
    Then \(F_n(T)\) is a perfect \(j\)-th power in $\bF_q[T]$ if and only if \(n = 2^{ak}\) for some \(k \in \bN\).
\end{theorem}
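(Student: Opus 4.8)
The plan is to follow the template of the proof of Theorem~\ref{thm:mainFn}, using the characteristic-$2$ factorization $F_{2^c m}(T) = T^{2^c-1}F_m(T)^{2^c}$ from Proposition~\ref{prop:Fpkm} to reduce everything to a statement about perfect powers of $F_m(T)$ for odd $m$. For the $(\Leftarrow)$ direction: if $n = 2^{ak}$ then $F_n(T) = T^{2^{ak}-1}$ by Lemma~\ref{lem:Fpk}, and since $a = \mathrm{ord}_j(2)$ we have $j \mid 2^{ak}-1$, so $T^{2^{ak}-1} = \bigl(T^{(2^{ak}-1)/j}\bigr)^j$ (with $k=0$ giving $F_1(T)=1=1^j$). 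For the $(\Rightarrow)$ direction, write $n = 2^c m$ with $m$ odd; by Proposition~\ref{prop:Fpkm}, $F_n(T) = T^{2^c-1}F_m(T)^{2^c}$, and $T \nmid F_m(T)$ by Corollary~\ref{cor:coprime}. If $F_n(T)$ is a perfect $j$-th power, then comparing the multiplicity of each monic irreducible factor shows $j \mid 2^c-1$ and $j \mid 2^c e$ for every multiplicity $e$ occurring in $F_m(T)$; as $j$ is odd, $\gcd(j,2^c)=1$, so $j \mid e$ for all such $e$, i.e.\ $F_m(T)$ is itself a perfect $j$-th power.

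Everything therefore reduces to the following claim: \emph{for odd $m \ge 3$ and any field of characteristic $2$, every monic irreducible factor of $F_m(T)$ occurs with multiplicity exactly $2$}. Granting this, $F_m(T)$ is a perfect $d$-th power if and only if $d \mid 2$, so for odd $j > 2$ it is a perfect $j$-th power only when $m = 1$. Hence $m = 1$, so $F_n(T) = T^{2^c-1}$, which is a perfect $j$-th power iff $j \mid 2^c-1$ iff $a \mid c$, i.e.\ iff $n = 2^{ak}$ for some $k \in \bN$.

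To prove the claim I would reduce the classical factorization~\eqref{eqn:fibo_factor} modulo a prime $\mathfrak{P}$ of $\overline{\bZ}$ lying above $2$. Since $i \equiv 1 \pmod{\mathfrak{P}}$ and, because $m$ is odd, $-\zeta_{2m}$ is a primitive $m$-th root of unity while $-1 \equiv 1$, the element $\zeta_{2m}$ reduces to a primitive $m$-th root of unity $\omega \in \overline{\bF}_2$. Thus over $\overline{\bF}_2$ the polynomial $F_m(T)$ has root multiset $\{\omega^k+\omega^{-k} : 1 \le k \le m-1\}$; and $\omega^k+\omega^{-k} = \omega^l+\omega^{-l}$ iff $k \equiv \pm l \pmod m$, so the fibres of $k \mapsto \omega^k+\omega^{-k}$ on $\{1,\dots,m-1\}$ are exactly the pairs $\{k,\,m-k\}$ (which have two elements since $m$ is odd). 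Hence $F_m(T)$ has precisely $(m-1)/2$ distinct roots in $\overline{\bF}_2$, each of multiplicity exactly $2$; since $\bF_q$ is perfect, every irreducible factor of $F_m(T)$ over $\bF_q$ is separable and so inherits multiplicity $2$.

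I expect the claim, and in particular the characteristic-$2$ bookkeeping in its proof, to be the main obstacle: one must verify that reducing mod $2$ trivializes both $i$ and the sign $\zeta_2 = -1$, so that $\zeta_{2m}$ collapses onto a primitive $m$-th (not $2m$-th) root of unity, and that the coincidences among roots pair them up \emph{exactly} in twos, yielding multiplicity exactly $2$ rather than merely an even multiplicity. An alternative avoiding $\overline{\bZ}$ is to start from $F_m(T) = G_m(T)^2$ with $G_m(T) = F_{(m+1)/2}(T) + F_{(m-1)/2}(T)$ (from the proof of Proposition~\ref{prop:Fmp2sq}) and prove $G_m(T)$ is squarefree directly, e.g.\ by a discriminant computation; but the root count above seems cleaner.
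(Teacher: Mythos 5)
Your proof is correct, and at the crucial step it is actually more careful than the paper's own argument. The skeleton is the same: the $(\Leftarrow)$ direction via $F_{2^{ak}}(T)=T^{2^{ak}-1}$ is identical, and the $(\Rightarrow)$ direction likewise reduces through $F_{2^cm}(T)=T^{2^c-1}F_m(T)^{2^c}$ together with $\gcd(T,F_m)=1$. Where you diverge is the step ``$m>1$ forces failure'': the paper simply asserts that \eqref{eqn:F2kmfactor} rules this out, which implicitly requires knowing that $F_m(T)$ (for odd $m>1$) is not itself a perfect $j$-th power --- and this cannot be deduced from Proposition \ref{prop:Fmsqfree}, whose square-freeness conclusion is false in characteristic $2$ (indeed $F_m=G_m^2$ there). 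You correctly identify this as the real content and supply it: by reducing the factorization \eqref{eqn:fibo_factor} modulo a prime above $2$, noting $i\equiv 1$ and that $\zeta_{2m}$ collapses to a primitive $m$-th root of unity, you show every irreducible factor of $F_m$ occurs with multiplicity \emph{exactly} $2$, so $F_m$ is a $j$-th power only for $j\mid 2$. The pairing argument $\omega^k+\omega^{-k}=\omega^l+\omega^{-l}\iff k\equiv\pm l\pmod m$ is the same computation as in the second proof of Proposition \ref{prop:Fmsqfree} (the needed nonvanishing of $\omega^{k+l}-1$ following from separability of $x^m-1$ over $\bF_2$, rather than from Lemma \ref{distinct} as stated). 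Your alternative route --- showing $G_m=F_{(m+1)/2}+F_{(m-1)/2}$ is square-free --- would serve equally well and is closer in spirit to Proposition \ref{prop:Fmp2sq}. In short: same architecture, but you prove a lemma the paper leaves unjustified, and that lemma is genuinely needed.
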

\begin{proof}
    If $n = 2^{ak}$, then $2^a \equiv 1 \pmod{j}$ and \eqref{eqn:Fpkeven} shows that $F_{n}(T) = T^{2^{ak} - 1} = (T^{\frac{2^{ak} - 1}{j}})^{j}$ is a perfect $j$-th power.
    Conversely, assume that $n = 2^{s} \cdot m$ for $s \ge 0$ and odd $m$.
    From \eqref{eqn:fibo_poly}, one can easily check that $F_m(0) = 1$ for all odd $m$.
    In particular, $\gcd(T, F_m(T)) = 1$.
    If $m > 1$, then by \eqref{eqn:F2kmfactor} $F_n(T)$ cannot be a perfect $j$-th power.
    Hence $m = 1$ and $F_n(T) = F_{2^s}(T) = T^{2^{s} - 1}$, which is a perfect $j$-th power if and only if $2^s \equiv 1 \pmod{j} \Leftrightarrow s \equiv 0 \pmod{a}$.
\end{proof}

Similarly, we can also prove Theorem \ref{thm:mainFnpowerful}.
\begin{proof}[Proof of Theorem \ref{thm:mainFnpowerful}]
    $(\Rightarrow)$ If $p \nmid n$, then $F_n(T)$ is square-free and cannot be a powerful polynomial.

    $(\Leftarrow)$ If $n = p^k m$ with $k \ge 1$, then $F_n(T)$ is powerful by \eqref{eqn:Fpkmfactor}.
    Here we use $p > 3$ to guarantee $\frac{p^k - 1}{2} > 1$.
\end{proof}

We need extra care for $p = 3$, since we can have $\frac{p^k - 1}{2} = 1$ when $k = 1$.
\begin{corollary}
    \label{cor:powerfulp3}
    For $p = 3$, $F_n(T)$ is powerful over $\bF_q$ if and only if $9 \mid n$.
\end{corollary}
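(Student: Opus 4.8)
The plan is to specialize the factorization of Proposition~\ref{prop:Fpkm} to $p = 3$: writing $n = 3^k m$ with $3 \nmid m$, we have
\[
    F_{3^k m}(T) = (T^2 + 4)^{\frac{3^k - 1}{2}} F_m(T)^{3^k},
\]
where $F_m(T)$ is square-free by Proposition~\ref{prop:Fmsqfree} and coprime to $T^2 + 4$ by Corollary~\ref{cor:coprime}. The one extra ingredient I would record is that $T^2 + 4$ is itself square-free over $\bF_q$ in characteristic $3$ --- here it degenerates to $T^2 + 1$, and $\gcd(T^2 + 1, 2T) = 1$ (equivalently, its discriminant $-16 \equiv 2 \pmod 3$ is nonzero). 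Combining these, in the displayed factorization each irreducible factor of $T^2 + 4$ occurs with multiplicity exactly $\frac{3^k - 1}{2}$ and each irreducible factor of $F_m(T)$ with multiplicity exactly $3^k$.

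For the ``if'' direction I would assume $9 \mid n$, so $k \ge 2$; then $\frac{3^k - 1}{2} \ge 4$ and $3^k \ge 9$, so every irreducible factor of $F_n(T)$ has multiplicity at least $2$ and $F_n(T)$ is powerful (the case $m = 1$, where the $F_m$-part is trivial, is included).

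For the ``only if'' direction I would assume $F_n(T)$ is powerful and nonconstant, i.e.\ $n \ge 2$, and again write $n = 3^k m$ with $3 \nmid m$. If $k = 0$ then $F_n(T) = F_m(T)$ is square-free of degree $n - 1 \ge 1$, hence has an irreducible factor of multiplicity $1$, a contradiction. If $k = 1$ then $F_n(T) = (T^2 + 4) F_m(T)^3$, and since $T^2 + 4$ is square-free and coprime to $F_m(T)$, every irreducible factor of $T^2 + 4$ appears in $F_n(T)$ with multiplicity exactly $1$, again a contradiction. Hence $k \ge 2$, so $9 \mid 3^k \mid n$.

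The only subtlety --- and the reason the result is stated with $9 \mid n$ rather than $3 \mid n$ --- is that $\frac{3^k - 1}{2}$ equals $1$ precisely when $k = 1$, so the ``extra'' square-free factor $T^2 + 4$ coming from $F_3(T)$ destroys powerfulness, whereas for $k \ge 2$ this factor reappears to a power $\ge 4$. Verifying that $T^2 + 4$ stays square-free in characteristic $3$ is exactly the point where the hypothesis $p > 3$ of Theorem~\ref{thm:mainFnpowerful} had to be re-examined, but it causes no real difficulty.
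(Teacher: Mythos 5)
Your proof is correct and follows essentially the same route as the paper, which simply specializes the factorization $F_{3^k m}(T) = (T^2+4)^{\frac{3^k-1}{2}} F_m(T)^{3^k}$ together with the square-freeness and coprimality results and observes that $\frac{3^k-1}{2} > 1$ forces $k \ge 2$. Your explicit check that $T^2+4 = T^2+1$ remains square-free in characteristic $3$ (needed to rule out the $k=1$ case) is a detail the paper leaves implicit, but it is not a different argument.
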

\begin{proof}
    The proof is similar to that of Theorem \ref{thm:mainFnpowerful}, except that we need $k \ge 2$ to have $\frac{3^k - 1}{2} > 1$.
\end{proof}

Similar characterization is also available for $p = 2$.
\begin{corollary}
\label{cor:powerfulp2}
    For $p = 2$, $F_n(T)$ is powerful over $\bF_q$ if and only if $n \ge 3$ is odd or a multiple of $4$.
\end{corollary}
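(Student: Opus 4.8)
The plan is to treat $F_n(T)$ case by case according to $n \bmod 4$, reusing the characteristic-$2$ identities recorded in the proofs of Proposition~\ref{prop:Fmp2sq} and Theorem~\ref{thm:mainFnp2}. I would restrict attention to nonconstant $F_n(T)$, i.e.\ $n \ge 2$; the constant $F_1(T) = 1$ is powerful by the stated convention (and is, being odd, also a perfect square).

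If $n$ is odd with $n \ge 3$, then Proposition~\ref{prop:Fmp2sq} shows $F_n(T)$ is a perfect square, $F_n(T) = g(T)^2$ with $g(T) = F_{(n+1)/2}(T) + F_{(n-1)/2}(T)$; a perfect square is automatically powerful, since each irreducible factor of $g(T)^2$ occurs with an even, hence $\ge 2$, multiplicity. If $4 \mid n$, write $n = 2^k m$ with $k \ge 2$ and $m$ odd; by \eqref{eqn:F2kmfactor} we have $F_n(T) = T^{2^k - 1} F_m(T)^{2^k}$, so the exponent of $T$ in $F_n(T)$ is at least $2^k - 1 \ge 3$ while every irreducible factor distinct from $T$ divides $F_m(T)$ and therefore appears with multiplicity a positive multiple of $2^k \ge 4$ — all multiplicities are $\ge 2$, so $F_n(T)$ is powerful. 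Finally, if $n \equiv 2 \pmod 4$, write $n = 2m$ with $m$ odd; from the proof of Proposition~\ref{prop:Fmp2sq}, $F_n(T) = T F_m(T)^2$, and evaluating the recurrence \eqref{eqn:fibo_poly} at $T = 0$ gives $F_j(0) = F_{j-2}(0)$, hence $F_m(0) = F_1(0) = 1$ and $T \nmid F_m(T)$; thus $T$ occurs in $F_n(T)$ with multiplicity exactly $1$ and $F_n(T)$ is not powerful. Assembling the three cases yields the corollary.

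I do not expect a genuine obstacle: the result is a short case analysis built from facts already established. The two points deserving a little care are the asymmetry that a perfect square is always powerful while the converse fails — which is precisely why the odd case is immediate whereas the even cases must be read off the explicit factorizations — and the verification that $T \nmid F_m(T)$ for odd $m$, which is what pins the exponent of $T$ in $F_{2m}(T) = T F_m(T)^2$ down to exactly $1$.
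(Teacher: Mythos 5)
Your proof is correct and follows essentially the same route as the paper's: both split $n = 2^k m$ with $m$ odd, use $F_n = T^{2^k-1}F_m^{2^k}$ together with the square identity $F_m = (F_{(m+1)/2}+F_{(m-1)/2})^2$ for the cases $k=0$ and $k\ge 2$, and use $T \nmid F_m(T)$ to rule out $k=1$. Your explicit check that $F_m(0)=1$ pins down the exponent of $T$, which the paper leaves implicit, but the argument is the same.
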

\begin{proof}
    By \eqref{eqn:F2kmfactor} and \cite[Lemma 3.5]{kitayama2017irreducibility}, when $n = 2^k m$ with odd $m$, we have 
    \[F_n(T) = T^{2^k - 1}F_{m}(T)^{2^k} = T^{2^k - 1}(F_{(m+1)/2}(T)  + F_{(m-1)/2}(T))^{2^{k+1}}.\]
    If $k = 0$ and $n = m \ge 3$ is odd, then $F_n(T) = (F_{(m+1)/2}(T)  + F_{(m-1)/2}(T))^{2}$ is a square of a non-constant polynomial, hence powerful.
    Similarly, $F_n(T)$ is powerful when $k \ge 2$.
    However, if $k = 1$, then $F_n(T) = TF_m(T)^2$ is not powerful, since $T$ and $F_m(T)$ are coprime and $F_n(T)$ is divisible by $T$ only once.
\end{proof}

Table \ref{tab:fibo} shows the factorizations of $F_n(T)$ over $\bF_p$ for $p = 2, 3, 5$ and $1 \le n \le 20$, which is consistent with the above characterizations.
\section{Generalization}

In this section, we generalize the results for the Fibonacci polynomials to $W_n(T)$ \emph{with $\deg f(T) = 1$ and $\deg g(T) = 0$}, including the Chebyshev polynomials of the second kind (Example \ref{ex:chebyshev2}). We also find all the polynomials in the Lucas-type sequence $w_n(T)$ which are perfect powers. 

First, we need the following factorization lemmas, similar to Lemma \ref{lem:Fpk} and \ref{lem:Fpkm}.
We skip the details since the proofs are similar.

\begin{lemma}
\label{lem:Wwpk}
    Let $p$ be a prime and $n = p^k$ for some $k \in \mathbb{N}$. Then $W_n(T) = \Delta(T)^{n-1}$ and $w_n(T) = f(T)^n$ in $\bF_q[T]$.
\end{lemma}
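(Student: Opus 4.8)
The plan is to follow the proof of Lemma~\ref{lem:Fpk} with the obvious changes, replacing $T^{2}+4$ by $\Delta(T)^{2}=f(T)^{2}+4g(T)$ and $T$ by $f(T)=\alpha(T)+\beta(T)$, and using the Frobenius identity $(x\pm y)^{p^{k}}=x^{p^{k}}\pm y^{p^{k}}$ valid in any ring of characteristic $p$. The base case $k=0$ (that is, $n=1$) is immediate from the initial conditions $W_{1}(T)=1=\Delta(T)^{0}$ and $w_{1}(T)=f(T)=f(T)^{1}$, so I may assume $k\ge 1$.

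For odd $p$, I would first record that $\alpha(T)+\beta(T)=f(T)$ and $\alpha(T)-\beta(T)=\Delta(T)$, both immediate from \eqref{eqn:lucas_alpha}--\eqref{eqn:lucas_delta}. Then the Binet forms \eqref{eqn:Wnbinet} and \eqref{eqn:wnbinet}, combined with Frobenius, give
\[
  W_{p^{k}}(T)=\frac{\alpha(T)^{p^{k}}-\beta(T)^{p^{k}}}{\Delta(T)}=\frac{(\alpha(T)-\beta(T))^{p^{k}}}{\Delta(T)}=\Delta(T)^{p^{k}-1}=\Delta(T)^{n-1},
\]
and likewise $w_{p^{k}}(T)=\alpha(T)^{p^{k}}+\beta(T)^{p^{k}}=(\alpha(T)+\beta(T))^{p^{k}}=f(T)^{p^{k}}=f(T)^{n}$.

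For $p=2$, the explicit expressions \eqref{eqn:lucas_alpha}--\eqref{eqn:lucas_beta} are unavailable, so I would instead use the characteristic-$2$ convention $\alpha(T)=f(T)$, $\beta(T)=0$; this is consistent with $\Delta(T)^{2}=f(T)^{2}+4g(T)=f(T)^{2}$, so that $\Delta(T)=f(T)$ in this case. The same manipulation then yields $W_{2^{k}}(T)=f(T)^{2^{k}}/f(T)=f(T)^{2^{k}-1}=\Delta(T)^{n-1}$ and $w_{2^{k}}(T)=f(T)^{2^{k}}=f(T)^{n}$.

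The main (and really the only) point requiring care is the characteristic-$2$ case: there one cannot use \eqref{eqn:lucas_alpha}--\eqref{eqn:lucas_beta} and must instead work with the modified $\alpha(T)=f(T)$, $\beta(T)=0$ and with $\Delta(T)=f(T)$, checking these are consistent with $\Delta(T)^{2}=f(T)^{2}+4g(T)$. I would also flag the standing hypothesis $\Delta(T)\ne 0$, equivalently $f(T)^{2}+4g(T)\ne 0$, which makes the Binet quotient meaningful and which holds for every sequence considered in this section. Otherwise the argument is exactly the two one-line Binet-and-Frobenius computations displayed above.
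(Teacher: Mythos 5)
Your argument matches the paper's: the paper omits the proof of Lemma~\ref{lem:Wwpk} entirely, saying only that it is ``similar'' to Lemmas~\ref{lem:Fpk} and~\ref{lem:Fpkm}, and the odd-$p$ half of your proposal is exactly the Binet-plus-Frobenius computation of Lemma~\ref{lem:Fpk} with $T$ and $T^2+4$ replaced by $f$ and $f^2+4g$. The one point I would push back on is your characteristic-$2$ justification. The substitution $\alpha(T)=f(T)$, $\beta(T)=0$ that the paper advertises for characteristic $2$ does not actually make the Binet forms \eqref{eqn:Wnbinet}--\eqref{eqn:wnbinet} hold (those elements are not roots of $X^2-fX-g$ unless $g=0$; for Fibonacci over $\bF_2$ it would give $W_3=T^3/T=T^2$ rather than $T^2+1$), so computing $W_{2^k}=f^{2^k}/f$ from it is deriving a correct formula from a false intermediate identity. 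The clean fix is the one the paper itself uses in the $p=2$ case of Lemma~\ref{lem:Fpk}: keep $\alpha,\beta$ the genuine roots of $X^2-fX-g$, observe that in characteristic $2$ one has $\Delta=\alpha-\beta=\alpha+\beta=f$ (consistent with $\Delta^2=f^2+4g=f^2$), and then the identical Frobenius computation gives $W_{2^k}=(\alpha-\beta)^{2^k-1}=f^{2^k-1}$ and $w_{2^k}=(\alpha+\beta)^{2^k}=f^{2^k}$. With that one repair, and your correct remark that the whole argument presupposes $\Delta\ne 0$, the proof is complete.
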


\begin{lemma}
\label{lem:Wwpkm}
    Let $n = p^km$ for some $k \in \mathbb{N}$. Then, $W_n(T) = W_{p^k}(T) W_{m} (T)^{p^k} = \Delta(T)^{p^k - 1} W_{m}(T)^{p^k}$ and $w_n(T) = w_m(T)^{p^k}$ in $\mathbb{F}_q[T]$.
    In particular, $w_n(T)$ is a perfect power if and only if it is powerful.
\end{lemma}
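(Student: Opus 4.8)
The plan is to run the proofs of Lemmas~\ref{lem:Fpk} and~\ref{lem:Fpkm} again, this time with Horadam's Binet forms \eqref{eqn:Wnbinet}--\eqref{eqn:wnbinet} replacing \eqref{eqn:FnBinet}, and then to read the final assertion off the resulting factorization of $w_n$. Everything takes place in $\bF_q[T]$, i.e. in characteristic $p$, so that the Frobenius identities $(x-y)^{p^k}=x^{p^k}-y^{p^k}$ and $(x+y)^{p^k}=x^{p^k}+y^{p^k}$ are available.

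Write $n=p^k m$ with $p\nmid m$. Applying \eqref{eqn:Wnbinet} and the Frobenius identity to $x=\alpha^m$, $y=\beta^m$ gives
\[
  W_{p^k m} \;=\; \frac{\alpha^{p^k m}-\beta^{p^k m}}{\Delta} \;=\; \frac{(\alpha^m-\beta^m)^{p^k}}{\Delta} \;=\; \Delta^{p^k-1}\,W_m^{p^k},
\]
and Lemma~\ref{lem:Wwpk} identifies $\Delta^{p^k-1}$ with $W_{p^k}$, giving the first chain of equalities. For $w_n$ the computation is even shorter: by \eqref{eqn:wnbinet}, $w_{p^k m}=\alpha^{p^k m}+\beta^{p^k m}=(\alpha^m+\beta^m)^{p^k}=w_m^{p^k}$. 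In characteristic $2$ one cannot divide by $2$, but then $\alpha=f$, $\beta=0$, $\Delta=f$, and both displays go through verbatim (recovering $W_n=f^{n-1}$, $w_n=f^n$); this case therefore requires nothing beyond the substitution, exactly as for the Fibonacci polynomials.

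For the final sentence, the implication ``perfect power $\Rightarrow$ powerful'' is immediate from the definitions: such a polynomial is either constant or has every irreducible factor occurring with multiplicity a multiple of $j\ge 2$. For the converse, use $w_n=w_m^{p^k}$ with $p\nmid m$: if $k\ge 1$ then $w_n=\bigl(w_m^{p^{k-1}}\bigr)^{p}$ is already a perfect $p$-th power, and if $k=0$ then $n=m$ is coprime to $p$, where I would invoke the Lucas-type analogue of Proposition~\ref{prop:Fmsqfree} (square-freeness of $w_m$ when $p\nmid m$, proved from the discriminant/resultant formulas of \cite{florez2019resultant} exactly as in the first proof of Proposition~\ref{prop:Fmsqfree}). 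A nonconstant square-free polynomial is neither powerful (it has a multiplicity-one factor) nor a perfect power (a $j$-th power with $j\ge 2$ has all multiplicities $\ge 2$), while a constant is both (powerful by convention, and a perfect $j$-th power for any $j$ coprime to $q-1$); hence the two properties are equivalent for every $w_n$.

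The Frobenius manipulations are purely routine, so I expect the substantive content to be minimal. The only points that need care are the characteristic-$2$ bookkeeping (substituting $\alpha=f$, $\beta=0$, $\Delta=f$ before any manipulation) and, for the last sentence, making sure the square-freeness of $w_m$ for $p\nmid m$ is in hand; if it has not been established by this point, the clean fix is to prove it just beforehand, since its proof is the verbatim Lucas-type transcription of the first proof of Proposition~\ref{prop:Fmsqfree}. Neither is a genuine obstacle.
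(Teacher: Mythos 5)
Your proposal is correct and follows exactly the route the paper intends: the paper omits this proof entirely (``the proofs are similar'' to Lemmas \ref{lem:Fpk} and \ref{lem:Fpkm}), and your Frobenius/Binet computation, including the characteristic-$2$ substitution $\Delta = f$, is the verbatim transcription of those arguments. Your treatment of the final ``in particular'' clause is in fact more careful than the paper's, and you rightly flag that the $k=0$ case rests on the square-freeness of $w_m$ for $p\nmid m$ (Proposition \ref{prop:Wwsqfree}), which in the paper is only established after this lemma and under the hypotheses $\deg f = 1$, $\deg g = 0$, $p \nmid \lc(f)\cdot g$.
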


We also have the following coprimality result analogous to Corollary \ref{cor:coprime}.
\begin{proposition}
\label{prop:Wmcoprime}
    Suppose $p$ is an odd prime and $p \nmid m \cdot \lc(f) \cdot g$. Then $f(T)^2 + 4g(T)$ and $W_m(T)$ are coprime in $\mathbb{F}_q[T]$.
\end{proposition}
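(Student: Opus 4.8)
The plan is to imitate the proof of Corollary~\ref{cor:coprime}, replacing its input Corollary~\ref{cor:alpha} (the Fibonacci specialization) by the general Florez--Higuita--Ram\'irez congruence of Lemma~\ref{lem:florez_mod}. Since two polynomials over $\bF_q$ are coprime precisely when they have no common root in an algebraic closure, I would argue by contradiction: assume $f(T)^2 + 4g(T)$ and $W_m(T)$ have a common root $\alpha$. Throughout this section $\deg f = 1$ and $\deg g = 0$, so writing $f(T) = \lc(f)\,T$ and $g(T) = g \in \bF_q$ with $\lc(f), g$ nonzero modulo $p$, the polynomial $f(T)^2 + 4g(T) = \lc(f)^2 T^2 + 4g$ has degree $2$ and its roots lie in $\bF_{q^2}$, so we may take $\alpha \in \bF_{q^2}$. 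The first observation is that $\alpha \neq 0$, because $f(0)^2 + 4g = 4g \neq 0$ in $\bF_q$.

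Next I would evaluate Lemma~\ref{lem:florez_mod} at $\alpha$: since $\alpha$ is a root of $f^2 + 4g$, the congruence gives $W_m(\alpha) = m(-g)^{(m-1)/2}$ when $m$ is odd and $W_m(\alpha) = (-1)^{(m+2)/2}\, m\, f(\alpha)\, g^{(m-2)/2}/2$ when $m$ is even (the coefficients are integers, so the congruence descends to $\bF_q$). In the odd case $p \nmid m$ and $p \nmid g$ force $W_m(\alpha) \neq 0$; in the even case $p \nmid m$, $p \nmid g$, the invertibility of $2$ (as $p$ is odd), and $f(\alpha) = \lc(f)\,\alpha \neq 0$ (using $p \nmid \lc(f)$ and $\alpha \neq 0$) again force $W_m(\alpha) \neq 0$. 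Either way this contradicts $W_m(\alpha) = 0$, establishing coprimality. The case $m = 1$ needs no argument since $W_1 = 1$.

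I do not expect a serious obstacle here; the proof is short once Lemma~\ref{lem:florez_mod} is available. The only steps that need attention are (i) confirming that the right-hand side of the Florez congruence is a unit in $\bF_q$ --- this is exactly what the hypotheses $p \nmid m$, $p \nmid \lc(f)$, $p \nmid g$ and $p$ odd are tailored to provide --- and (ii) the verification that $\alpha \neq 0$, which relies on $p \nmid g$. This last point is also why the statement sits inside the section's convention $\deg g = 0$: were $g$ a degree-one monomial, $f^2 + 4g$ would be divisible by $T$, and $W_m$ (e.g.\ $W_2 = fW_1$) can genuinely share that factor, so the hypothesis would have to be strengthened accordingly.
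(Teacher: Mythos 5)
Your proof is correct and takes essentially the same approach as the paper's: assume a common zero $\alpha$ of $f(T)^2+4g(T)$ and $W_m(T)$ and contradict Lemma~\ref{lem:florez_mod}. You in fact supply more detail than the paper, which simply asserts that such an $\alpha$ cannot exist --- in particular your verification that $\alpha\neq 0$ (hence $f(\alpha)\neq 0$) in the even case, using $p\nmid g$, is exactly the point the paper leaves implicit.
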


\begin{proof}
    Suppose $f(T)^2 + 4g(T)$ and $W_m(T)$ in $\mathbb{F}_q[T]$ share a factor. Then, there must exist some common zero of the polynomial, which we will denote as $\alpha \in \bF_{q^2}$.
    Then $f(\alpha)^2 + 4g(\alpha) = 0$ and \(W_m(\alpha) = 0\).
    However, such \(\alpha\) cannot exist by Lemma \ref{lem:florez_mod}.
\end{proof}

We will now let $\deg f = 1$ and $\deg g = 0$ in the recursive definition of $W_n(T)$ and $w_n(T)$ (see Definition \ref{lucassq}). Using the discriminant formulas in \cite{florez2019resultant}, we can show the following results analogous to the case of Fibonacci polynomials.

\begin{proposition}
    \label{prop:Wwsqfree}
    Suppose $p$ is odd and $p \nmid m \cdot \lc(f) \cdot g$, where $\lc(f)$ is the leading coefficient of $f(T)$. Then, $W_m(T)$ and $w_m(T)$ are square-free over $\mathbb{F}_q$.
\end{proposition}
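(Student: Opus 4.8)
The plan is to closely follow the first proof of Proposition~\ref{prop:Fmsqfree}, replacing the Fibonacci discriminant by the discriminant formulas that \cite{florez2019resultant} establishes for the general Fibonacci-type sequence $\mathcal{F}_n = W_n$ and Lucas-type sequence $\mathcal{L}_n = w_n$. Recall that a polynomial $P$ over a field is square-free if and only if it has nonzero leading coefficient and $\mathrm{Disc}(P) \ne 0$; moreover $\mathrm{Disc}$ is a polynomial with integer coefficients in the coefficients of $P$, so it suffices to show that the discriminants of $W_m$ and $w_m$, computed over $\bQ$ in \cite{florez2019resultant}, remain nonzero after reduction modulo $p$, and that the leading terms do not collapse.

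First I would dispose of the handful of small cases not covered by the cited formulas: $W_1 = 1$ and $w_0 = 2$ are nonzero constants; $W_2 = f$ and $w_1 = f$ are linear with leading coefficient $\lc(f) \ne 0$ in $\bF_q$; and $w_2 = f^2 + 2g$ has discriminant $-8\,\lc(f)^2 g$, a unit in $\bF_q$ under the hypotheses. For the remaining $m$, the leading coefficients of $W_m$ and $w_m$ are $\lc(f)^{m-1}$ and $\lc(f)^{m}$ respectively, which are nonzero in $\bF_q$ because $p \nmid \lc(f)$; hence the degree is preserved under reduction mod $p$ and the $\bF_q$-discriminant is the reduction of the $\bQ$-discriminant.

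The core step is to read off from \cite{florez2019resultant} that, once we specialize to $\deg f = 1$ and $\deg g = 0$ (so $f$ is the monomial $\lc(f)\,T$, with $f(0)=0$, and $g$ is a constant), the discriminants $\mathrm{Disc}(W_m)$ and $\mathrm{Disc}(w_m)$ are, up to sign, products of a power of $2$, a power of $\lc(f)$, a power of $g$, and a power of $m$, with exponents depending only on $m$. Granting this, each factor is a unit in $\bF_q$: $2 \ne 0$ because $p$ is odd, and $\lc(f)$, $g$, $m$ are all nonzero in $\bF_q$ by the hypothesis $p \nmid m \cdot \lc(f) \cdot g$. Thus both discriminants are nonzero in $\bF_q$, and $W_m$, $w_m$ are square-free.

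I expect the main obstacle to be bookkeeping rather than conceptual: one must extract the exact shape of these discriminants from \cite{florez2019resultant} — whose formulas are phrased via resultants and a parameter encoding $f(0)^2+4g$, which here degenerates to $4g$ — and verify that specializing a degree-one monomial $f$ and a constant $g$ introduces no prime factors beyond $2$, $\lc(f)$, $g$, and $m$ (in particular, checking the small-$m$ threshold below which the formula does not directly apply, which is precisely what forces the separate treatment above). As a sanity check and fallback, one could instead mimic the second proof of Proposition~\ref{prop:Fmsqfree}: via the Binet forms \eqref{eqn:Wnbinet} and \eqref{eqn:wnbinet} the zeros of $W_m$ (resp.\ $w_m$) are the values of $T$ at which $\alpha(T)/\beta(T)$ is a nontrivial $m$-th root of unity (resp.\ an $m$-th root of $-1$), and an argument parallel to Lemma~\ref{distinct}, using $p \nmid 2\,m\,g\,\lc(f)$ to keep the relevant roots of unity distinct and the relevant quantities invertible in $\overline{\bF}_p$, shows that these zeros stay distinct modulo $p$.
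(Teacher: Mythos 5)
Your proposal is correct and follows essentially the same route as the paper: both deduce square-freeness from the discriminant formulas of Florez--Higuita--Ramirez for $W_m$ and $w_m$, observing that the discriminants are (up to sign) products of powers of $2$, $\lc(f)$, $g$, and $m$, all units in $\bF_q$ under the hypotheses. Your extra care about degree preservation under reduction mod $p$ and the small-$m$ edge cases is sound diligence that the paper leaves implicit, but it does not change the argument.
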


\begin{proof}
    This follows from the discriminant formulas for $W_m$ and $w_m$ \cite[Theorem 4, Theorem 5]{florez2019resultant}:
    \begin{align*}
        \mathrm{Disc}(W_m) &= (-g)^{\frac{(m-1)(m-2)}{2}} 2^{m-1} m^{m-3} \lc(f)^{(m-1)(m-2)}, \\
        \mathrm{Disc}(w_m) &= (-g)^{\frac{m(m-1)}{2}} 2^{m-1} m^{m} \lc(f)^{m(m-1)} \text{.}
    \end{align*}
    In particular, the discriminants are nonzero for $p \nmid m \cdot \lc(f) \cdot g$, and $W_m$ and $w_m$ are thus square-free over $\mathbb{F}_q$.
\end{proof}

\begin{theorem}
    Let $p$ be an odd prime number and assume $p \nmid \lc(f) \cdot g$. If $j > 1$ and $p \mid j$, then there exists no $n$ such that $W_n(T)$ is a perfect $j$-th power in $\mathbb{F}_q[T]$.
\end{theorem}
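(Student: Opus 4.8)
The plan is to imitate the proof of Theorem~\ref{thm:Fnnonpower} almost verbatim, with $T^2+4$ replaced throughout by $f(T)^2+4g(T)$ and with the generalized factorization and coprimality statements (Lemmas~\ref{lem:Wwpk}--\ref{lem:Wwpkm} and Propositions~\ref{prop:Wwsqfree}, \ref{prop:Wmcoprime}) playing the roles of their Fibonacci analogues. As usual the statement should be read for nonconstant $W_n(T)$, equivalently $n\ge 2$, since $W_0(T)=0$ and $W_1(T)=1$ are constants. Suppose for contradiction that $W_n(T)$ is a perfect $j$-th power for some such $n$, and write $n=p^k m$ with $k\ge 0$ and $p\nmid m$. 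The hypothesis $p\nmid\lc(f)\cdot g$ together with $p\nmid m$ gives $p\nmid m\cdot\lc(f)\cdot g$, so Propositions~\ref{prop:Wwsqfree} and \ref{prop:Wmcoprime} apply with this $m$.

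Next I would split on $k$. If $k=0$, then $W_n(T)=W_m(T)$ is square-free by Proposition~\ref{prop:Wwsqfree} and nonconstant, hence not a perfect $j$-th power for $j>1$; so $k\ge1$. If $k\ge1$, Lemma~\ref{lem:Wwpkm} gives
\[
W_n(T)=\Delta(T)^{p^k-1}\,W_m(T)^{p^k}=\bigl(f(T)^2+4g(T)\bigr)^{\frac{p^k-1}{2}}W_m(T)^{p^k},
\]
where the second equality uses that $p$ is odd, so that $p^k-1$ is even and $\Delta(T)^{p^k-1}=(\Delta(T)^2)^{(p^k-1)/2}=(f(T)^2+4g(T))^{(p^k-1)/2}$ is an honest polynomial (even though the square root $\Delta(T)$ itself need not be). Now $f(T)^2+4g(T)=\lc(f)^2T^2+4g$ has degree $2$ and, because $g\ne0$ in $\mathbb{F}_q$, is coprime to its derivative $2\lc(f)^2T$, so it is square-free; pick an irreducible factor $\pi(T)$ of it. By Proposition~\ref{prop:Wmcoprime}, $\pi(T)\nmid W_m(T)$, so $\pi(T)$ divides $W_n(T)$ with multiplicity exactly $\frac{p^k-1}{2}$. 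If $W_n(T)$ is a perfect $j$-th power, then $j\mid\frac{p^k-1}{2}$; combined with $p\mid j$ this forces $p\mid\frac{p^k-1}{2}$, hence $p\mid p^k-1$, which is impossible. This contradiction proves the theorem.

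The only points needing a sentence of justification beyond the Fibonacci argument are: (i) that $\Delta(T)^{p^k-1}$ in Lemma~\ref{lem:Wwpkm} is literally the polynomial $(f^2+4g)^{(p^k-1)/2}$, which is clear since $p$ is odd; and (ii) that $f(T)^2+4g(T)$ is square-free in $\mathbb{F}_q[T]$, which follows from $p$ odd and $p\nmid\lc(f)\cdot g$ as indicated. Neither is a genuine obstacle. Note there is no small-characteristic exception to worry about here (in contrast to some of the later cases), precisely because $p$ is assumed odd and $p\nmid\lc(f)\cdot g$ already excludes the degenerate behaviour of $f^2+4g$; the entire content is the same multiplicity count as in the Fibonacci case.
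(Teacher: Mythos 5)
Your proposal is correct and follows essentially the same route as the paper's proof: the same decomposition $n=p^km$, the square-free case $k=0$ via Proposition~\ref{prop:Wwsqfree}, and the multiplicity count on $(f^2+4g)^{(p^k-1)/2}W_m^{p^k}$ via Proposition~\ref{prop:Wmcoprime} forcing $j\mid\frac{p^k-1}{2}$, contradicting $p\mid j$. Your extra verification that $f^2+4g$ is square-free (so the multiplicity is exactly $\frac{p^k-1}{2}$) is a small justification the paper leaves implicit, not a different argument.
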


\begin{proof}
    One can proceed as in the proof of Theorem \ref{thm:Fnnonpower}.
    Suppose $W_{p^km} (T)$ is a perfect $j$-th power for some $k \ge 0$ and $m$ not divisible by $p$.
    If $k = 0$, $W_{p^k m}(T) = W_{m}(T)$ is square-free by Proposition \ref{prop:Wwsqfree}, which cannot be a perfect $j$-th power for $j > 1$.
    Hence $k \ge 1$, and Lemma \ref{lem:Wwpkm} gives $W_{p^k m}(T) = (f(T)^2 + 4g(T))^{\frac{p^k - 1}{2}} W_m(T)^{p^k}$, where the coprimality of $f^2 + 4g$ and $W_m$ (Proposition \ref{prop:Wmcoprime}) implies that the exponent $\frac{p^k - 1}{2}$ has to be a multiple of $j$.
    This contradicts to $p \mid j$.
\end{proof}

\begin{theorem}
    \label{thm:Wn_perfect_power}
    Let $j > 1$. Let $p$ be an odd prime number coprime to $2j$, and let $a = \mathrm{ord}_{2j}(p)$ denote the order of $p$ in $\left( \mathbb{Z}/2j\mathbb{Z} \right)^\times$. Suppose $p \nmid \lc(f) \cdot g$. For $n > 0$, $W_n(T)$ is a perfect $j$-th power in $\mathbb{F}_q[T]$ if and only if $n = p^{ak}$ for some $k \in \mathbb{N}$.
\end{theorem}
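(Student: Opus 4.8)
The plan is to imitate the proof of Theorem~\ref{thm:mainFn} almost verbatim, with $h(T) := f(T)^2 + 4g(T)$ playing the role of $T^2 + 4$, and with Lemmas~\ref{lem:Wwpk}, \ref{lem:Wwpkm}, Proposition~\ref{prop:Wmcoprime}, and Proposition~\ref{prop:Wwsqfree} replacing Lemmas~\ref{lem:Fpk}, \ref{lem:Fpkm}, Corollary~\ref{cor:coprime}, and Proposition~\ref{prop:Fmsqfree} respectively. Recall that, since $p$ is odd, Lemma~\ref{lem:Wwpkm} can be rewritten as $W_{p^\ell m'}(T) = h(T)^{(p^\ell - 1)/2}\, W_{m'}(T)^{p^\ell}$ whenever $p \nmid m'$. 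Two preliminary observations are needed. First, because $\deg f = 1$ the polynomial $h$ has degree $2$, and a direct computation gives $\mathrm{Disc}(h) = -16\,\lc(f)^2\, g$, which is nonzero in $\bF_q$ since $p$ is odd and $p \nmid \lc(f)\cdot g$; hence $h$ is a nonconstant square-free polynomial over $\bF_q$. Second, again because $\deg f = 1$ and leading terms do not cancel in the recursion, $\deg W_n = n - 1$ for every $n \ge 1$, so $W_m$ is nonconstant whenever $m \ge 2$.

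For the direction $(\Leftarrow)$: if $n = p^{ak}$, then Lemma~\ref{lem:Wwpk} gives $W_n(T) = \Delta(T)^{p^{ak} - 1} = h(T)^{(p^{ak}-1)/2}$. Since $p^a \equiv 1 \pmod{2j}$ we have $2j \mid p^{ak} - 1$, so $j \mid \tfrac{p^{ak}-1}{2}$; writing $\tfrac{p^{ak}-1}{2} = je$ displays $W_n(T) = \bigl(h(T)^e\bigr)^j$ as a perfect $j$-th power. (This direction uses neither square-freeness of $h$ nor coprimality.)

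For the direction $(\Rightarrow)$: suppose $W_n(T)$ is a perfect $j$-th power and write $n = p^{ak + b} m$ with $p \nmid m$, $k \ge 0$, and $0 \le b < a$. Lemma~\ref{lem:Wwpkm} gives
\[
    W_n(T) = h(T)^{\frac{p^{ak+b}-1}{2}}\, W_m(T)^{p^{ak+b}},
\]
where $W_m$ is square-free by Proposition~\ref{prop:Wwsqfree} (the hypothesis $p \nmid m\cdot\lc(f)\cdot g$ is met) and $h$ is coprime to $W_m$ by Proposition~\ref{prop:Wmcoprime}. Consequently every irreducible factor $\pi$ of $h$ has multiplicity exactly $\tfrac{p^{ak+b}-1}{2}$ in $W_n$, because $W_m$ contributes nothing by coprimality and $h$ is square-free. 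As $W_n$ is a $j$-th power, $j \mid \tfrac{p^{ak+b}-1}{2}$; were $b \ne 0$, this would give $p^{ak+b} \equiv 1 \pmod{2j}$, hence $p^b \equiv 1 \pmod{2j}$ (using $p^{ak} \equiv 1$), contradicting $0 < b < a = \mathrm{ord}_{2j}(p)$. So $b = 0$, and then $h^{(p^{ak}-1)/2}$ is itself a $j$-th power by the computation above. If $m \ge 2$, then $W_m$ is nonconstant, so choosing an irreducible factor $\pi$ of $W_m$ we read off its multiplicity in $W_n$ as $p^{ak}$, forcing $j \mid p^{ak}$ and contradicting $\gcd(p, j) = 1$ (which holds since $p$ is coprime to $2j$). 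Hence $m = 1$ and $n = p^{ak}$.

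The only genuinely non-mechanical point, and the one place where the section's hypotheses $\deg f = 1$ and $p \nmid \lc(f)\cdot g$ are essential, is the assertion that $h = f^2 + 4g$ is nonconstant and square-free over $\bF_q$; this is exactly what distinguishes the present regime from the complementary case $\deg f = 0$ (where $h$ is a constant and the argument would require a separate treatment). Everything else is routine bookkeeping with multiplicities of irreducible factors, mirroring the proof of Theorem~\ref{thm:mainFn}.
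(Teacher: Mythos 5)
Your proof is correct and follows essentially the same route as the paper's: the same decomposition $n = p^{ak+b}m$, the same use of Lemmas \ref{lem:Wwpk} and \ref{lem:Wwpkm}, with Proposition \ref{prop:Wmcoprime} supplying coprimality and Proposition \ref{prop:Wwsqfree} square-freeness of $W_m$. Your one addition is the explicit check that $h = f^2 + 4g$ is nonconstant and square-free (via $\mathrm{Disc}(h) = -16\,\lc(f)^2 g \ne 0$), a point the paper leaves implicit when concluding that $h^{(p^{ak+b}-1)/2}$ fails to be a perfect $j$-th power whenever $j \nmid \frac{p^{ak+b}-1}{2}$.
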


\begin{proof}
    $(\Leftarrow)$ Let $k \in \mathbb{N}$ be arbitrary. By Lemma \ref{lem:Wwpk}, $W_{p^{ak}}(T) = \Delta(T)^{p^{ak} - 1} = \left(f(T)^2 + 4 g(T) \right)^{\frac{p^{ak} - 1}{2}}$. Because $p^a \equiv 1 \pmod{2j}$ and thus $\frac{p^{ak} - 1}{2} \equiv 0 \mod j$, $W_{p^{ak}}(T)$ is a perfect $j$-th power.
    
    $(\Rightarrow)$ Suppose $W_n(T)$ is a perfect $j$-th power in $\mathbb{F}_q[T]$ and write $n = p^{ak + b}m$, where $p$ and $m$ are coprime (so $p \nmid m$), $k \geq 0$, and \(0 \le b < a\). With Lemma \ref{lem:Wwpkm}, we may write $W_n(T)$ as follows:
    \begin{align*}
        W_{p^{ak + b}m}(T) &= W_{p^{ak + b}}(T) \left(W_m(T)\right)^{p^{ak + b}} = \left(f(T)^2 + 4 g(T) \right)^{\frac{p^{ak +b} - 1}{2}} \left(W_m(T)\right)^{p^{ak + b}}
    \end{align*}

    Suppose $b > 0$. Then, $p^{ak + b} \not \equiv 1 \pmod{2j}$ by the assumption $b < a$ and thus $\frac{p^{ak + b} - 1}{2} \not \equiv 0 \pmod j$. So $W_{p^{ak + b}}(T)$ is not a perfect $j$-th power. Then, $W_{p^{ak + b}m}(T)$ is not a perfect $j$-th power since $W_{p^{ak + b}}(T)$ and $W_m(T)$ are coprime by Proposition \ref{prop:Wmcoprime}. We thus let $b = 0$ and rewrite $W_n(T)$ as follows: $$W_{p^{ak}m}(T) = W_{p^{ak}}(T) \left(W_m(T)\right)^{p^{ak}} \text{,}$$ where $W_{p^{ak}}(T)$ is a perfect $j$-th power and $W_m(T)$ is square-free by Proposition \ref{prop:Wwsqfree}. Then, if $m \neq 1$, then $W_m(T) \neq 1$ and by Proposition \ref{prop:Wmcoprime} $j > 1$ must divide $p^{ak}$, which is a contradiction to $\gcd(p,2j) = 1$. We thus have $m = 1$ and $n = p^{ak}$.
\end{proof}

\begin{theorem}
    \label{thm:Wn_powerful}
    Suppose $p > 3$ and assume $p \nmid \lc(f) \cdot g$.
    Then \(W_n(T)\) is powerful in \(\bF_q[T]\) if and only if $n$ is a multiple of $p$.
\end{theorem}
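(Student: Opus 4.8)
The plan is to mirror the proof of Theorem~\ref{thm:mainFnpowerful} almost verbatim, since the required ingredients are now available in the generalized setting: square-freeness of the prime-to-$p$ part (Proposition~\ref{prop:Wwsqfree}), the factorization coming from Lemmas~\ref{lem:Wwpk} and~\ref{lem:Wwpkm}, and the coprimality of $f^2+4g$ with $W_m$ (Proposition~\ref{prop:Wmcoprime}). All of these require only that $p$ not divide $m\cdot\lc(f)\cdot g$, where $m$ is the prime-to-$p$ part of $n$; this follows from the standing hypothesis $p\nmid\lc(f)\cdot g$. So the argument is essentially a bookkeeping exercise on multiplicities, and the role of the assumption $p>3$ is to make the relevant exponent at least $2$.

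For the ``only if'' direction I would argue by contrapositive. If $p\nmid n$, then $n$ is its own prime-to-$p$ part, so Proposition~\ref{prop:Wwsqfree} applies and $W_n(T)$ is square-free; since $\deg W_n = n-1$ with leading coefficient $\lc(f)^{n-1}\ne 0$ (for $n>1$), it is a non-constant square-free polynomial and hence not powerful. Thus a powerful $W_n(T)$ forces $p\mid n$.

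For the ``if'' direction, write $n=p^k m$ with $k\ge 1$ and $p\nmid m$. Combining Lemmas~\ref{lem:Wwpk} and~\ref{lem:Wwpkm} (and using that $p^k-1$ is even, as $p$ is odd) gives
\[
    W_n(T) = \bigl(f(T)^2 + 4g(T)\bigr)^{\frac{p^k-1}{2}} W_m(T)^{p^k}.
\]
Here $f(T)^2+4g(T)$ has degree exactly $2$ (leading coefficient $\lc(f)^2\ne 0$), so it is non-constant and has at least one irreducible factor. Every irreducible factor of $W_m(T)^{p^k}$ occurs with multiplicity a multiple of $p^k\ge p\ge 5$, and every irreducible factor of $(f^2+4g)^{(p^k-1)/2}$ occurs with multiplicity a multiple of $\frac{p^k-1}{2}\ge\frac{p-1}{2}\ge 2$; by Proposition~\ref{prop:Wmcoprime} these two collections of irreducible factors are disjoint (though even without this observation the multiplicities could only increase). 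Hence every irreducible factor of $W_n(T)$ has multiplicity at least $2$, so $W_n(T)$ is powerful.

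There is no real obstacle here: the one point that genuinely uses $p>3$ is the bound $\frac{p^k-1}{2}\ge 2$, which can fail when $p=3$ and $k=1$. I would expect $p=3$ (and $p=2$) to be treated in separate corollaries, exactly as Corollaries~\ref{cor:powerfulp3} and~\ref{cor:powerfulp2} handle the Fibonacci case, since there whether $W_{pm}(T)$ is powerful depends on finer information about $f^2+4g$ (e.g.\ whether it is itself a square). Aside from that, the verification is routine.
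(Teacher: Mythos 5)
Your proposal is correct and follows essentially the same route as the paper: the paper's own proof is exactly the contrapositive square-freeness argument (via Proposition~\ref{prop:Wwsqfree}) for one direction and the factorization of Lemma~\ref{lem:Wwpkm} with the exponent bound $\frac{p^k-1}{2}\ge 2$ (this is where $p>3$ enters) for the other. Your write-up just spells out the multiplicity bookkeeping that the paper leaves implicit.
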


\begin{proof}
    $(\Rightarrow)$ If $p \nmid n$, then $W_n(T)$ is square-free and cannot be a powerful polynomial.

    $(\Leftarrow)$ If $n = p^k m$ with $k \ge 1$, then $W_n(T)$ is powerful by Lemma \ref{lem:Wwpkm}.
\end{proof}

When $p = 2$, we have the following results, generalizing Theorem \ref{thm:mainFnp2} and Corollary \ref{cor:powerfulp2}.
\begin{lemma}
    \label{lem:Wnexpansion}
    For each $n$, there exist integers $\{c_{n, k}\}_{0 \le k \le \frac{n-1}{2}}$ such that
    \begin{equation}
        \label{eqn:Wnexpansion}
        W_n = \sum_{k=0}^{\lfloor\frac{n-1}{2}\rfloor} c_{n, k} f^{n-1 - 2k} g^k
    \end{equation}
    where $c_{n, (n-1)/2} = 1$ for odd $n$.
    In particular, $f \mid W_{n}$ for even $n$.
    When $n$ is odd and $g$ is a nonzero constant, then $\gcd(W_{n}, f) = 1$.
\end{lemma}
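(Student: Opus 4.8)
The plan is to prove the explicit closed form
\[
W_n = \sum_{k=0}^{\lfloor (n-1)/2\rfloor} \binom{n-1-k}{k}\, f^{\,n-1-2k}\, g^{\,k}
\]
by induction on $n$, which simultaneously produces the integers $c_{n,k}=\binom{n-1-k}{k}$ and verifies $c_{n,(n-1)/2}=\binom{(n-1)/2}{(n-1)/2}=1$ for odd $n$. (Alternatively one can read this off the Binet form \eqref{eqn:Wnbinet} via the identity $\frac{x^n-y^n}{x-y}=\sum_k(-1)^k\binom{n-1-k}{k}(x+y)^{n-1-2k}(xy)^k$ with $x+y=f$ and $xy=-g$, but the induction is self-contained.) The base cases $n=0$ ($W_0=0$, an empty sum) and $n=1$ ($W_1=1=\binom{0}{0}f^0g^0$) are immediate.

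For the inductive step I would substitute the formulas for $W_{n-1}$ and $W_{n-2}$ into $W_n=fW_{n-1}+gW_{n-2}$. Multiplying the $W_{n-1}$-expansion by $f$ turns its $k$-th term into $\binom{n-2-k}{k}f^{\,n-1-2k}g^{k}$, and multiplying the $W_{n-2}$-expansion by $g$ turns its $(k-1)$-st term into $\binom{n-2-k}{k-1}f^{\,n-1-2k}g^{k}$; adding these and invoking Pascal's rule $\binom{n-2-k}{k}+\binom{n-2-k}{k-1}=\binom{n-1-k}{k}$ gives the claimed coefficient of $f^{\,n-1-2k}g^{k}$. The one point needing care is comparing the summation ranges $\lfloor(n-1)/2\rfloor$, $\lfloor(n-2)/2\rfloor$, $\lfloor(n-3)/2\rfloor$ across the two parities of $n$: when $n$ is odd the top term $k=(n-1)/2$ receives no contribution from $fW_{n-1}$ and equals the top coefficient of $W_{n-2}$ (which is again odd), so it is $1$ by the inductive hypothesis, confirming $c_{n,(n-1)/2}=1$.

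With the expansion in hand, both divisibility assertions are read off the exponent of $f$. For even $n$ the largest index is $k=(n-2)/2$, so the least power of $f$ occurring is $f^{\,n-1-2\cdot(n-2)/2}=f^{1}$, and hence $f\mid W_n$. For odd $n$, the term $k=(n-1)/2$ is $c_{n,(n-1)/2}f^0 g^{(n-1)/2}=g^{(n-1)/2}$, while every other term has $f$-exponent $n-1-2k\ge 1$ and is thus divisible by $f$; therefore $W_n\equiv g^{(n-1)/2}\pmod f$. If $g$ is a nonzero constant this residue is a nonzero constant, so $f$ (which, when $\deg f=1$, is $\lc(f)\,T$ up to units, and is a unit when $\deg f=0$) cannot divide $W_n$, giving $\gcd(W_n,f)=1$. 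I do not expect a genuine obstacle; the whole proof is an induction plus two one-line exponent comparisons, and the only slightly fiddly part is keeping the index ranges straight in the inductive step.
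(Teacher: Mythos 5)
Your proposal is correct and follows the same route as the paper: the paper proves \eqref{eqn:Wnexpansion} by induction on $n$ via the recurrence and then reads off both divisibility claims from the exponents of $f$, exactly as you do. You simply make the induction explicit by identifying $c_{n,k}=\binom{n-1-k}{k}$ and invoking Pascal's rule, which is a harmless (and verifiable) refinement of the paper's one-line argument.
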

\begin{proof}
    The equation \eqref{eqn:Wnexpansion} easily follows from induction on $n$, and the following claims immediately follow from it.
\end{proof}

\begin{lemma}
    \label{lem:Wnsquare}
    Let $q$ be a power of 2 and $g \in \bZ$. Over $\bF_q$, we have
    \begin{enumerate}
        \item $W_{2n}(T) = f(T) W_n(T)^2$
        \item $W_{2n+1}(T) = (W_{n+1}(T) + g W_{n}(T))^2$.
    \end{enumerate}
\end{lemma}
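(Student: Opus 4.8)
The plan is to derive both identities from the Binet formula \eqref{eqn:Wnbinet}, exploiting that in characteristic $2$ the Frobenius $x\mapsto x^2$ is additive and that $g^2\equiv g\pmod 2$ for $g\in\bZ$. Throughout I would let $\alpha,\beta$ denote the roots of $X^2-f(T)X-g$ in an algebraic closure of $\bF_q(T)$; since $\deg f=1$ we have $f\ne 0$, so $\Delta=\alpha-\beta=\sqrt{f^2+4g}=f$ is nonzero in characteristic $2$ and $\alpha\ne\beta$, so \eqref{eqn:Wnbinet} genuinely applies. I would first record the elementary relations $\alpha+\beta=f$, $\alpha\beta=-g$, and $\Delta=f$.

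For (1): since $(x-y)^2=x^2+y^2$ in characteristic $2$, one has $(\alpha^n-\beta^n)^2=\alpha^{2n}+\beta^{2n}=\alpha^{2n}-\beta^{2n}$, whence
\[
    W_{2n}=\frac{\alpha^{2n}-\beta^{2n}}{\Delta}=\frac{(\alpha^n-\beta^n)^2}{\Delta}=\Delta\left(\frac{\alpha^n-\beta^n}{\Delta}\right)^2=\Delta W_n^2=fW_n^2.
\]
For (2): expanding the square in characteristic $2$ and using $g^2=g$ gives $(W_{n+1}+gW_n)^2=W_{n+1}^2+gW_n^2$; rewriting each term via \eqref{eqn:Wnbinet} together with $(x-y)^2=x^2+y^2$, substituting $g=\alpha\beta$ into the numerator, and factoring out $\alpha+\beta=f=\Delta$ collapses the numerator to $\Delta(\alpha^{2n+1}-\beta^{2n+1})$, so the expression equals $(\alpha^{2n+1}-\beta^{2n+1})/\Delta=W_{2n+1}$.

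An alternative that avoids $\alpha,\beta$ altogether is to establish the integer polynomial identities $W_{2n}=2W_nW_{n+1}-fW_n^2$ and $W_{2n+1}=W_{n+1}^2+gW_n^2$ over $\bZ[f,g]$ by an easy induction (of the same flavour as the proof of \eqref{eqn:Wnexpansion}), then reduce modulo $2$: the first becomes $W_{2n}=fW_n^2$, and the second becomes $(W_{n+1}+gW_n)^2$ since the cross term $2gW_nW_{n+1}$ vanishes and $g^2\equiv g$. I do not anticipate a real obstacle; the one subtlety worth a sentence is that in characteristic $2$ the substitution $\alpha=f$, $\beta=0$ suggested earlier is only consistent with $\alpha\beta=-g$ when $g=0$, so one should work with honest roots in a splitting field (or use the integer-identity route), and it is precisely the hypothesis $g\in\bZ$ — a constant, not a linear monomial — that makes $g^2=g$ hold and hence makes statement (2) correct.
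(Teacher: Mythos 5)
Your proof is correct. Part (1) is exactly the paper's argument (the Binet form \eqref{eqn:Wnbinet} together with $\Delta=f$ in characteristic $2$). For part (2) the paper takes a slightly different and shorter route: it multiplies by $f$ and uses the recurrence, $f(T)W_{2n+1}(T)=W_{2n+2}(T)-gW_{2n}(T)$, then applies part (1) to both terms on the right and cancels the nonzero factor $f(T)$ in the domain $\bF_q[T]$, so no further manipulation of $\alpha$ and $\beta$ is needed; you instead substitute $g=\alpha\beta$ into the squared Binet expressions and factor out $\alpha+\beta=\Delta$. Both derivations hinge on the same two facts --- additivity of squaring in characteristic $2$ and $g^2\equiv g\pmod 2$ for $g\in\bZ$ --- so the difference is cosmetic, and your integer-identity alternative $W_{2n+1}=W_{n+1}^2+gW_n^2$, $W_{2n}=2W_nW_{n+1}-fW_n^2$ over $\bZ[f,g]$ is also valid and arguably the cleanest of the three. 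Your side remark is well taken: the paper's blanket characteristic-$2$ convention $\alpha=f$, $\beta=0$ is incompatible with $\alpha\beta=-g$ unless $g=0$, so for this lemma one should indeed work with genuine roots of $X^2-fX-g$ in a splitting field (using that $f\ne 0$ makes this polynomial separable), exactly as you do.
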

\begin{proof}
    The proof for $F_n(T)$ in \cite[Lemma 3.5]{kitayama2017irreducibility} generalizes.
    The first claim follows from the Binet form \eqref{eqn:Wnbinet} and $\Delta = f$. 
    For the second claim, we have $f(T) W_{2n+1}(T) = W_{2n+2}(T) - g W_{2n}(T) = f(T)(W_{n+1}(T) + g W_{n}(T))^2$ (here we used $g^2 \equiv g \equiv -g\pmod{2}$), and the result follows.
\end{proof}

\begin{theorem}
    Let $p = 2$ and $q$ be a power of 2.
    Let $j > 2$ be an odd number and $a = \mathrm{ord}_j(2)$ be the order of 2 in $(\bZ / j\bZ)^\times$.
    Then $W_n(T)$ is a perfect $j$-th power in $\bF_q[T]$ if and only if $n = 2^{ak}$ for some $k \ge 1$.
\end{theorem}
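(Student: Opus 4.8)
The plan is to run the proof of Theorem~\ref{thm:mainFnp2} almost verbatim, using the characteristic-$2$ factorization lemmas of this section in place of the Fibonacci-specific ones, with a single new ingredient replacing Proposition~\ref{prop:Fmsqfree} (which is false in characteristic $2$). Throughout I assume, as in the neighbouring theorems, that $2\nmid\lc(f)\cdot g$; in particular $\Delta=f$ over $\bF_q$. For the ``if'' direction, if $n=2^{ak}$ with $k\ge 1$ then Lemma~\ref{lem:Wwpk} gives $W_n(T)=\Delta(T)^{n-1}=f(T)^{2^{ak}-1}$, and since $2^a\equiv 1\pmod j$ we have $j\mid 2^{ak}-1$, so $W_n=\bigl(f^{(2^{ak}-1)/j}\bigr)^j$ is a perfect $j$-th power.

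For the ``only if'' direction, suppose $W_n=h^j$; consistent with the focus on nonconstant polynomials take $n\ge 2$, and write $n=2^s m$ with $m$ odd. Lemma~\ref{lem:Wwpkm} gives $W_n=f^{2^s-1}W_m^{2^s}$, and Lemma~\ref{lem:Wnexpansion} gives $\gcd(W_m,f)=1$, so $T\nmid W_m$ and the two factors are coprime; comparing the multiplicity of $T$ on the two sides of $W_n=h^j$ forces $j\mid 2^s-1$, i.e. $s=ak$ for some $k\ge 0$. It remains to force $m=1$. Suppose $m\ge 3$. By Lemma~\ref{lem:Wnsquare}, $W_m=P^2$ with $P:=W_{(m+1)/2}+g\,W_{(m-1)/2}$, which is nonconstant of degree $(m-1)/2$ and coprime to $T$ (since $T\nmid W_m$). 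The key claim, proved below, is that $P$ is square-free over $\bF_q$; granting this, $W_n=f^{2^s-1}P^{2^{s+1}}$, so any irreducible $\pi\mid P$ satisfies $\pi\ne T$ and $v_\pi(W_n)=2^{s+1}$, which the odd number $j>2$ cannot divide — contradicting $W_n=h^j$. Hence $m=1$, so $n=2^s$, and $n\ge 2$ then forces $s\ge 1$ and thus $k\ge 1$.

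To prove that $P$ is square-free (equivalently, that $W_m$ is a constant times the square of a square-free polynomial for odd $m\ge 3$), I would count the roots of $W_m$ in $\overline{\bF}_2$ exactly as in the second proof of Proposition~\ref{prop:Fmsqfree} and Lemma~\ref{distinct}. Over $\bQ$, the Binet form together with a root-of-unity computation (the analogue of \eqref{eqn:fibo_factor}) shows that $W_m$ has the $m-1$ distinct roots $\lambda(\zeta_{2m}^k+\zeta_{2m}^{-k})$, $1\le k\le m-1$, where $\lambda=\sqrt{-g}/\lc(f)$; since $\lc(W_m)=\lc(f)^{m-1}$ and $\lambda$ are units at $2$, reduction modulo a prime over $2$ gives $W_m\equiv\lc(f)^{m-1}\prod_{k=1}^{m-1}\bigl(T-\overline\lambda(\bar\zeta_{2m}^k+\bar\zeta_{2m}^{-k})\bigr)$ with the full degree $m-1$ retained. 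In $\overline{\bF}_2$ the element $\bar\zeta_{2m}$ has order $m$ (the odd part of $2m$), so two of these roots coincide iff $(\bar\zeta_{2m}^k-\bar\zeta_{2m}^l)(\bar\zeta_{2m}^{k+l}-1)=0$, i.e. iff $m\mid k-l$ or $m\mid k+l$, i.e. (for $1\le k,l\le m-1$) iff $l\in\{k,m-k\}$; since $m$ is odd this orbit has exactly two elements. Therefore every root of $W_m$ over $\overline{\bF}_2$ has multiplicity exactly $2$, so $W_m=\lc(f)^{m-1}P^2$ with $P$ square-free of degree $(m-1)/2$.

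I expect the main obstacle to be precisely this last step: pinning down the roots of $W_m$ over $\bQ$ for a general degree-$1$ monomial $f$ and nonzero constant $g$ (the analogue of \eqref{eqn:fibo_factor}, via the Binet form), and then checking that reduction modulo $2$ is well-behaved enough that the root-collision pattern above is exactly the multiplicity pattern of $W_m$ over $\overline{\bF}_2$. Everything else is routine manipulation of the factorizations already recorded in Lemmas~\ref{lem:Wwpk}--\ref{lem:Wnsquare}.
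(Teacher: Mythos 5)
Your proposal is correct and follows the paper's overall skeleton: the same factorization $W_n=f^{2^s-1}W_m^{2^s}$ from Lemma~\ref{lem:Wwpkm}, the coprimality $\gcd(W_m,f)=1$ from Lemma~\ref{lem:Wnexpansion}, and the $T$-adic valuation forcing $j\mid 2^s-1$. Where you genuinely diverge is the step ruling out $m>1$. The paper disposes of this in one line by deferring to the proof of Theorem~\ref{thm:mainFnp2}, which itself simply asserts that $m>1$ prevents a perfect $j$-th power; you instead supply a substantive new lemma, namely that $W_m=P^2$ with $P=W_{(m+1)/2}+gW_{(m-1)/2}$ square-free, so that every irreducible factor of $W_n$ other than $T$ occurs to the exact power $2^{s+1}$, which an odd $j>1$ cannot divide. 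This addition is not mere pedantry: since $W_m$ is a nontrivial square in characteristic $2$ (Lemma~\ref{lem:Wnsquare}), the assertion that $W_m^{2^s}$ is not a perfect $j$-th power is equivalent to $W_m$ itself not being one, and without controlling the multiplicities of $W_m$ one cannot a priori exclude, say, $W_m=Q^6$ being a perfect cube. Your proof of the square-freeness of $P$ --- writing the characteristic-zero roots as $\lambda\bigl(\zeta_{2m}^k+\zeta_{2m}^{-k}\bigr)$ with $\lambda=\sqrt{-g}/\lc(f)$, reducing modulo $2$, and counting collisions via $\bigl(\bar\zeta_{2m}^k-\bar\zeta_{2m}^l\bigr)\bigl(\bar\zeta_{2m}^{k+l}-1\bigr)=0$ with $\bar\zeta_{2m}$ of order $m$ --- is the correct characteristic-$2$ analogue of Lemma~\ref{distinct} and the second proof of Proposition~\ref{prop:Fmsqfree}; the integrality and leading-coefficient checks you flag are exactly the points needing care, and they go through under your standing hypothesis $2\nmid\lc(f)\cdot g$, which the theorem statement leaves implicit but certainly requires. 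In short, your route buys a complete justification of the one step the paper treats as immediate, at the cost of an explicit root computation for general $f$ and $g$.
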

\begin{proof}
    It can be proved similarly as Theorem \ref{thm:mainFnp2}.
    For $n = 2^k m$ with odd $m$, note that $W_n(T) = f(T)^{2^k - 1} W_m(T)^{2^k}$ by Lemma \ref{lem:Wwpkm} and \ref{lem:Wnexpansion}.
\end{proof}

\begin{theorem}
    Let $p = 2$ and $q$ be a power of 2.
    $W_n(T)$ is powerful over $\bF_q$ if and only if $n \ge 3$ is odd or a multiple of $4$.
\end{theorem}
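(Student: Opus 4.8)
The plan is to run exactly the argument from the proof of Corollary~\ref{cor:powerfulp2} (the Fibonacci case), replacing $T$ by $f(T)$ and substituting the characteristic-$2$ identities already recorded for the sequence $W_n$. Write $n = 2^{k} m$ with $m$ odd. By Lemma~\ref{lem:Wwpkm} (using $\Delta = f$ in characteristic $2$) we have $W_n(T) = f(T)^{2^{k}-1}\,W_m(T)^{2^{k}}$, and by Lemma~\ref{lem:Wnsquare}(2), writing $m = 2\ell+1$, we have $W_m(T) = P(T)^{2}$ with $P(T) := W_{\ell+1}(T) + g\,W_{\ell}(T)$. Combining these,
\[
W_n(T) = f(T)^{2^{k}-1}\,P(T)^{2^{k+1}},
\]
and the regimes $k\ge 2$, $k=0$, $k=1$ correspond respectively to $4\mid n$, $n$ odd, and $n\equiv 2\pmod 4$.

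I would then dispose of the three cases. If $k\ge 2$, every irreducible factor of $f$ occurs in $W_n$ with multiplicity $\ge 2^{k}-1\ge 3$ and every irreducible factor of $P$ with multiplicity $\ge 2^{k+1}\ge 4$, so $W_n$ is powerful; this settles all $n$ divisible by $4$. If $k=0$ and $m\ge 3$, then $W_n = P^{2}$ is a perfect square, and it is nonconstant because $\deg W_j = j-1$ for all $j\ge 1$ — an immediate induction on the recurrence, since $\deg(fW_{j-1})=j-1>j-3=\deg(gW_{j-2})$ — so $\deg P=\ell=(m-1)/2\ge 1$; hence $W_n$ is powerful for all odd $n\ge 3$. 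If $k=1$, then $W_n = f\,W_m^{2}$ with $m$ odd; by Lemma~\ref{lem:Wnexpansion} we have $\gcd(W_m,f)=1$ because $g$ is a nonzero constant, and since $\deg f = 1$ makes $f$ an irreducible nonunit, $f$ occurs in $W_n$ with multiplicity exactly $1$, so $W_n$ is not powerful; this covers $n\equiv 2\pmod 4$, in particular $n=2$ (where $m=1$, $W_2 = f$). The only leftover value is $n=1$, giving the unit $W_1 = 1$, which lies outside the asserted range.

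The only point needing a little care beyond transcribing the Fibonacci argument is the degree bookkeeping guaranteeing that $P$ (equivalently $W_m$) is nonconstant for odd $m\ge 3$ and that $f$ is a genuine nonunit; this is where the standing hypotheses of this subsection — $\deg f = 1$ and $g$ a nonzero constant in $\bF_q$ — are used, and it is the main (mild) obstacle, since either degeneracy ($f$ constant, or $g\equiv 0$ in $\bF_q$) would change the characterization. Everything else follows directly from Lemmas~\ref{lem:Wwpkm}, \ref{lem:Wnsquare}, and \ref{lem:Wnexpansion}.
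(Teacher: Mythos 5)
Your proof is correct and follows essentially the same route as the paper, which simply adapts the argument of Corollary~\ref{cor:powerfulp2} using Lemma~\ref{lem:Wwpkm} and Lemma~\ref{lem:Wnsquare}; your use of Lemma~\ref{lem:Wnexpansion} for the coprimality of $W_m$ and $f$ in the $k=1$ case is exactly the intended substitute for the observation $F_m(0)=1$ in the Fibonacci case. The extra degree bookkeeping you supply (and the flag that $\deg f = 1$ and $g \neq 0$ in $\bF_q$ are genuinely needed) only makes explicit what the paper leaves implicit.
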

\begin{proof}
    The proof is similar to Corollary \ref{cor:powerfulp2}, where we use Lemma \ref{lem:Wwpkm} and \ref{lem:Wnsquare} instead.
\end{proof}

\begin{theorem}
    Let $p = 3$ and $q$ be a power of 3.
    $W_n(T)$ is powerful over $\bF_q$ if and only if $9 \mid n$.
\end{theorem}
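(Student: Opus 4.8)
The plan is to run the proof of Theorem~\ref{thm:Wn_powerful} essentially unchanged, paying attention to the one case where the exponent on $\Delta(T)$ collapses to $1$. As throughout this section, I assume $p \nmid \lc(f)\cdot g$, so that over $\bF_q$ the polynomial $f(T)=\lc(f)\,T$ is a nonzero linear monomial, $g(T)=g$ is a nonzero constant, and $\Delta(T)^2 = f(T)^2 + 4g(T) = \lc(f)^2 T^2 + 4g$ is a nonconstant quadratic.

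For the ($\Leftarrow$) direction, write $n = 3^k m$ with $k \ge 2$ and $3 \nmid m$. By Lemma~\ref{lem:Wwpkm} and the fact that $3^k - 1$ is even,
\[
    W_n(T) = \Delta(T)^{3^k - 1} W_m(T)^{3^k} = \bigl(f(T)^2 + 4g(T)\bigr)^{\frac{3^k-1}{2}} W_m(T)^{3^k}.
\]
Since $k \ge 2$ we have $\tfrac{3^k-1}{2} \ge 4$ and $3^k \ge 9$, so each of the two factors is a perfect $e$-th power with $e \ge 2$ and hence powerful; as a product of powerful polynomials is powerful, $W_n(T)$ is powerful.

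For the ($\Rightarrow$) direction, suppose $W_n(T)$ is powerful (and, as throughout, nonconstant) and write $n = 3^k m$ with $3 \nmid m$. If $k = 0$, then $W_n = W_m$ is square-free by Proposition~\ref{prop:Wwsqfree} (applicable since $3 \nmid m \cdot \lc(f)\cdot g$), hence not powerful; so $k \ge 1$. By Lemma~\ref{lem:Wwpkm}, $W_n(T) = (f^2 + 4g)^{\frac{3^k-1}{2}} W_m(T)^{3^k}$, and $f^2 + 4g$ is coprime to $W_m$ by Proposition~\ref{prop:Wmcoprime}. Because the two factors are coprime, $W_n$ is powerful if and only if both $(f^2+4g)^{\frac{3^k-1}{2}}$ and $W_m^{3^k}$ are powerful; the latter always is (as $3^k \ge 3 \ge 2$), while the former is powerful exactly when $\tfrac{3^k-1}{2}\ge 2$ — which holds automatically once $k \ge 2$ — or when $f^2 + 4g$ is itself powerful. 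So everything reduces to the case $k = 1$.

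The remaining case $k=1$ is the main (and essentially only) obstacle, and it explains why the statement reads ``$9 \mid n$'' rather than ``$3 \mid n$'' as in Theorem~\ref{thm:Wn_powerful}: there $W_3 = (f^2+4g)^{\frac{3-1}{2}} = f^2+4g$, and I must show this is \emph{not} powerful. As a quadratic, $f^2 + 4g$ is powerful precisely when it has a repeated root, i.e.\ when its discriminant vanishes; but in characteristic $3$ one has $f^2 + 4g = \lc(f)^2 T^2 + g$ with discriminant $-\lc(f)^2 g \neq 0$, using that $\lc(f), g \in \bF_q^\times$. Hence $f^2+4g$ is square-free and nonconstant, and by its coprimality with $W_m$ any irreducible factor of $f^2+4g$ occurs in $W_n = (f^2+4g)W_m^{3}$ with multiplicity exactly $1$, contradicting powerfulness. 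Therefore $k \ge 2$, i.e.\ $9 \mid n$. The only delicate point is this square-freeness of $f(T)^2+4g(T)$ in characteristic $3$, which is an elementary discriminant computation relying on $\lc(f)$ and $g$ being units.
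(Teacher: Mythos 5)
Your proof is correct and takes essentially the same route as the paper's, which simply says that the argument of Theorem~\ref{thm:Wn_powerful} goes through for $p=3$ once one demands $k \ge 2$ so that the exponent $\frac{3^k-1}{2}$ exceeds $1$. The one step you make explicit that the paper leaves implicit is the square-freeness of $f(T)^2+4g(T) = \lc(f)^2T^2+g$ in characteristic $3$ (needed, together with Proposition~\ref{prop:Wmcoprime}, to rule out $k=1$); this is a worthwhile detail to record, but it is not a different approach.
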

\begin{proof}
    The proof is similar to Corollary \ref{cor:powerfulp3}.
\end{proof}

\begin{example}
\label{ex:chebyshev2}
    When $f(T) = 2T$ and $g(T) = -1$, we have $W_n(T) = U_{n-1}(T)$, the Chebyshev polynomial of the second kind.
    By Theorem \ref{thm:Wn_perfect_power} and \ref{thm:Wn_powerful}, when $p$ is odd, $U_n(T)$ is a perfect power (resp. powerful) in $\bF_q[T]$ if and only if $n = p^{\mathrm{ord}_{2j}(p)k} - 1$ for some $k \in \bN$ (resp. $n \equiv -1 \pmod{p}$ for $p > 3$).
    Table \ref{tab:chebyshev2} shows factorizations of $U_n(T)$ over $\bF_3$ and $\bF_5$ for $1 \le n \le 20$.
    Note that $U_{2m}(T) \equiv 1 \pmod{2}$ and $U_{2m+1}(T) \equiv 0 \pmod{2}$ for all $m \ge 0$.
\end{example}

Similarly, we can characterize when $w_n(T)$ is a perfect power.
\begin{corollary}
\label{cor:wn_perfect_power}
    Let $p$ be an odd prime and assume $p \nmid \lc (f) \cdot g$. Then for $n > 0$, $w_n(T)$ is a perfect power in $\mathbb{F}_q[T]$ if and only if $n$ is a multiple of $p$.
\end{corollary}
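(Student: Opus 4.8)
The plan is to deduce the corollary directly from the two structural facts already established for Lucas-type sequences, in exact parallel with the proofs of Theorems~\ref{thm:Wn_perfect_power} and~\ref{thm:Wn_powerful}. Throughout we are in the regime $\deg f = 1$, $\deg g = 0$ with $p \nmid \lc(f)\cdot g$; in particular $\lc(f)$ is a unit in $\bF_q$, so by induction on the recurrence $w_n = f w_{n-1} + g w_{n-2}$ the leading term $\lc(f)\,T\cdot w_{n-1}$ dominates $g\, w_{n-2}$, giving $\deg w_n = n$ for every $n \ge 1$. Write $n = p^k m$ with $k \ge 0$ and $p \nmid m$; Lemma~\ref{lem:Wwpkm} reduces everything to the prime-to-$p$ part via $w_n(T) = w_m(T)^{p^k}$.

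For the direction ``$p \mid n \Rightarrow w_n$ is a perfect power'': if $p \mid n$ then $k \ge 1$, and $w_n(T) = w_m(T)^{p^k} = \bigl(w_m(T)^{p^{k-1}}\bigr)^{p}$ exhibits $w_n(T)$ as a perfect $p$-th power. Since $\deg w_m = m \ge 1$, this is a genuine (nonconstant) perfect power, so this direction is complete.

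For the converse, ``$p \nmid n \Rightarrow w_n$ is not a perfect power'': here $k = 0$, so $w_n(T) = w_m(T)$ with $m = n$ coprime to $p$. Because $p \nmid m \cdot \lc(f)\cdot g$, Proposition~\ref{prop:Wwsqfree} shows $w_n(T)$ is square-free over $\bF_q$, and it is nonconstant since $\deg w_n = n \ge 1$. A nonconstant square-free polynomial has every irreducible factor occurring with multiplicity exactly $1$; as no such multiplicity is divisible by any $j > 1$, the polynomial cannot be a perfect $j$-th power for any $j > 1$, i.e.\ it is not a perfect power.

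I expect no real obstacle here: the corollary is a corollary in the literal sense, following from Lemma~\ref{lem:Wwpkm} and Proposition~\ref{prop:Wwsqfree} just as the powerfulness statements for $F_n(T)$ did. The only point worth spelling out carefully is the degree bookkeeping showing $w_n(T)$ is nonconstant for $n > 0$ in the regime $\deg f = 1$, $\deg g = 0$ — this is needed both to rule out the vacuous ``every constant is a $p$-th power'' phenomenon and to ensure the $p$-th power produced in the easy direction is non-degenerate.
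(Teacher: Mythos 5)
Your proposal is correct and follows exactly the paper's route: Lemma \ref{lem:Wwpkm} gives $w_{p^k m} = w_m^{p^k}$ for the ``if'' direction, and Proposition \ref{prop:Wwsqfree} gives square-freeness (hence non-perfect-power) when $p \nmid n$. The extra degree bookkeeping showing $\deg w_n = n$ is a reasonable addition the paper leaves implicit, but it does not change the argument.
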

\begin{proof}
    If $n = p^k m$ for some positive integer $k$ and $m$, $w_n(T)$ is a perfect power by Lemma \ref{lem:Wwpkm}.
    
    Suppose $n$ is not divisible by $p$. By Proposition \ref{prop:Wwsqfree}, $w_n(T)$ is square-free, and thus $w_n(T)$ is not a perfect power.
\end{proof}

\begin{example}
    When $f(T) = T$ and $g(T) = 1$, $w_n(T)$ recovers the Lucas polynomial $L_n(T)$.
    Thus $L_n(T)$ is a perfect power in $\bF_q[T]$ if and only if $p \mid n$.
    Table \ref{tab:lucas} shows factorizations of $L_n(T)$ over $\bF_2$, $\bF_3$, and $\bF_5$ for $1 \le n \le 20$.
\end{example}

\begin{example}
    When $f(T) = 2T$ and $g(T) = -1$, we have $w_n(T) = 2 T_n(T)$ where $T_n(T)$ is the Chebyshev polynomial of the first kind.
    Thus $T_n(T)$ is a perfect power in $\bF_q[T]$ if and only if $p \mid n$.
    Table \ref{tab:chebyshev1} shows factorizations of $T_n(T)$ over $\bF_3$ and $\bF_5$ for $1 \le n \le 20$.
    Note that Bhargava and Zieve studied factorizations of Dickson polynomials $D_n(T, a)$ over finite fields as well \cite{bhargava1999factoring}, which recovers the Chebyshev polynomial as $D_n(2T, 1) = 2 T_n(T)$.
\end{example}

When $f(T) = 1$ and $g(T) = 2T$, we have $W_n(T) = J_n(T)$, the Jacobsthal polynomial \cite{horadam1997jacobsthal}.
Unfortunately, Theorem \ref{thm:Wn_perfect_power} and Theorem \ref{thm:Wn_powerful} do not apply to these polynomials, since Florez--Higuita--Ramirez's discriminant formula \cite[Theorem 4]{florez2019resultant} does not cover the case of $\deg f = 0$ and $\deg g = 1$.
However, we can still determine when $J_n(T)$ is a perfect power or powerful in $\bF_q[T]$, by using the idea of the second proof of Proposition \ref{prop:Fmsqfree}.
Since $J_n(T) \equiv 1 \pmod{2}$ for all $n$, we will assume $p$ is odd.

From the Binet's formula \eqref{eqn:Wnbinet}, we have
\[
J_n(T) = \frac{\left(\frac{1 + \sqrt{1 + 8T}}{2}\right)^n - \left(\frac{1 - \sqrt{1 + 8T}}{2}\right)^n}{\sqrt{1 + 8T}}, \quad \alpha(T) = \frac{1 + \sqrt{1 + 8T}}{2}, \quad \beta(T) = \frac{1 - \sqrt{1 + 8T}}{2}.
\]
We have $\deg J_n = \lceil \frac{n}{2} \rceil - 1$, and the above formula tells us that the zeros $x$ of $J_n$ satisfies $\alpha(x)^n = \beta(x)^n \Leftrightarrow (\alpha(x) / \beta(x))^n = 1$ with $\alpha(x) \ne \beta(x)$.
Thus $x$ has a form of
\[
\frac{1 + \sqrt{1 + 8x}}{1 - \sqrt{1 + 8x}} = \zeta_n^k \Leftrightarrow x = x_k = - \frac{\zeta_n^k}{2(1 + \zeta_n^k)^2},\quad k = 1, 2, \dots, \left \lceil \frac{n}{2} \right \rceil - 1.
\]
One can check that $2x_k$ are algebraic integers for all $k$.
In particular, we can reduce $x_k$ modulo $p$ and get elements $\overline{x}_k \in \overline{\bF}_p$.
Using Lemma \ref{distinct} and $x / (1 + x)^2 = y / (1 + y)^2 \Leftrightarrow (x - y)(1 - xy) = 0$, we can show that all the zeros $\overline{x}_k$ for $k = 1, 2, \dots, \lceil \frac{n}{2} \rceil - 1$ are distinct in $\overline{\bF}_p$ when $p \nmid n$. Hence,
\begin{proposition}
    \label{thm:Jmsqfree}
    Let $p$ be an odd prime and $p \nmid m$. Then $J_m(T)$ is square-free in $\bF_q[T]$.
\end{proposition}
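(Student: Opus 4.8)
The plan is to imitate the second proof of Proposition~\ref{prop:Fmsqfree}: locate all roots of $J_m(T)$ in $\overline{\bF}_p$ and show there are exactly $\deg J_m$ of them, i.e.\ that no two coincide, which forces $J_m(T)$ to be square-free over $\bF_q$. If $\lceil m/2\rceil-1=0$ (that is, $m\in\{1,2\}$) then $J_m$ is a nonzero constant and there is nothing to prove, so assume $m\ge 3$ and set $d:=\lceil m/2\rceil-1=\deg J_m$. Over $\bC$, the discussion preceding the statement exhibits the $d$ numbers $x_k=-\zeta_m^k/\bigl(2(1+\zeta_m^k)^2\bigr)$, $1\le k\le d$, as roots of $J_m$, and by the degree count they are all of its roots. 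They are pairwise distinct: by the identity $\tfrac{x}{(1+x)^2}=\tfrac{y}{(1+y)^2}\iff(x-y)(1-xy)=0$, an equality $x_k=x_l$ would give $\zeta_m^k=\zeta_m^l$ or $\zeta_m^{k+l}=1$, while for $1\le k,l\le d$ one checks $0<|k-l|<m$ and $0<k+l\le 2d<m$ (splitting on the parity of $m$), so both alternatives force $k=l$.

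Next I would reduce modulo $p$. Since $p$ is odd and $p\nmid m$, each $x_k$ is $p$-integral: its denominator $2(1+\zeta_m^k)^2$ involves only the prime $2$ and primes of $\bZ[\zeta_m]$ lying over prime divisors of $m$, none of which is $p$. Hence $x_k$ has a well-defined reduction $\overline{x}_k\in\overline{\bF}_p$ modulo a fixed place above $p$. For the same reason the leading coefficient of $J_m$ (namely $2^{(m-1)/2}$ for odd $m$ and $\tfrac{m}{2}\cdot 2^{(m-2)/2}$ for even $m$) is a $p$-adic unit, so $J_m\bmod p$ still has degree $d$ and its roots in $\overline{\bF}_p$ are precisely $\overline{x}_1,\dots,\overline{x}_d$. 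It therefore suffices to show these $d$ elements are distinct.

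To rule out $\overline{x}_k=\overline{x}_l$ with $k\ne l$ I would invoke Lemma~\ref{distinct}, which together with $\zeta_{2m}^m=-1\ne 1$ in $\overline{\bF}_p$ shows that $\overline{\zeta_{2m}}$ has exact multiplicative order $2m$, hence $\overline{\zeta_m}$ has exact order $m$; in particular $1+\overline{\zeta_m^k}\ne 0$ for $1\le k\le d$ (since $\overline{\zeta_m^k}=-1$ would force $k\equiv m/2\pmod m$, impossible for $k\le d$), so the rational-function identity above remains valid in $\overline{\bF}_p$. Then $\overline{x}_k=\overline{x}_l$ gives $\overline{\zeta_m^{\,k-l}}=1$ or $\overline{\zeta_m^{\,k+l}}=1$, i.e.\ $m\mid k-l$ or $m\mid k+l$, contradicting the index bounds of the first paragraph. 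Hence $J_m$ has $d=\deg J_m$ distinct roots in $\overline{\bF}_p$ and is square-free over $\bF_q$. I expect this last step to be the only delicate point: one must keep track of the modulus $2m$ versus $m$ when quoting Lemma~\ref{distinct}, and check that $1+\overline{\zeta_m^k}$ is a unit so that clearing denominators is legitimate in characteristic $p$; the rest is routine parity bookkeeping.
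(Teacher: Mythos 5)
Your proposal is correct and follows essentially the same route the paper takes: parametrizing the complex zeros as $x_k=-\zeta_m^k/\bigl(2(1+\zeta_m^k)^2\bigr)$, reducing them modulo a place above $p$, and deducing distinctness in $\overline{\bF}_p$ from Lemma~\ref{distinct} together with the identity $x/(1+x)^2=y/(1+y)^2\Leftrightarrow(x-y)(1-xy)=0$. The only difference is that you spell out details the paper leaves implicit (the $p$-integrality of the $x_k$, the leading coefficient of $J_m$ being a $p$-adic unit, and $1+\overline{\zeta_m^k}\ne 0$), all of which check out.
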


Now, using Lemma \ref{lem:Wwpk}, Lemma \ref{lem:Wwpkm}, and $J_1 = J_2 = 1$, we can characterize the perfect powers and powerful polynomials among $J_n(T)$ over $\bF_q$.
Since the argument is similar to that of Theorem \ref{thm:Wn_perfect_power} and \ref{thm:Wn_powerful}, we omit the details.
\begin{theorem}
\label{thm:Jn}
    Let $p$ be an odd prime and $q$ be a power of $p$.
    Let $j > 1$ and $a = \mathrm{ord}_{2j}(p)$ be the order of $p$ in $(\bZ / 2j \bZ)^\times$.
    Then for $n > 2$, $J_n(T)$ is a perfect $j$-th power in $\bF_q[T]$ if and only if $n = p^{ak}$ or $n = 2p^{ak}$ for some $k \ge 1$.
    Also, $J_n(T)$ is powerful in $\bF_q[T]$ if and only if $p \mid n$ when $p > 3$, or $9 \mid n$ when $p = 3$.
\end{theorem}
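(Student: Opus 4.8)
The plan is to transplant the proof of the Fibonacci results (Theorems~\ref{thm:mainFn} and~\ref{thm:mainFnpowerful}, Corollary~\ref{cor:powerfulp3}) to the parameters $f(T)=1$, $g(T)=2T$, feeding in Proposition~\ref{thm:Jmsqfree} as the square-free input in place of Proposition~\ref{prop:Fmsqfree}. The only structural novelty is that the Jacobsthal sequence has two constant initial values, $J_1(T)=J_2(T)=1$, which is precisely why the perfect-power locus is $\{\,p^{ak}\,\}\cup\{\,2p^{ak}\,\}$ and not just $\{\,p^{ak}\,\}$.

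First I would record the two ingredients used throughout. Since $p$ is odd, $f(T)^2+4g(T)=1+8T$ is a nonconstant linear (hence irreducible) polynomial in $\bF_q[T]$, and Lemmas~\ref{lem:Wwpk} and~\ref{lem:Wwpkm} give, for $n=p^k m$ with $p\nmid m$,
\[
J_n(T)=(1+8T)^{\frac{p^k-1}{2}}\,J_m(T)^{p^k}.
\]
Moreover $1+8T$ and $J_m(T)$ are coprime whenever $p\nmid m$: from the description of the zeros of $J_m$ as $x_k=-\zeta_m^k/\bigl(2(1+\zeta_m^k)^2\bigr)$, one sees that $x_k=-1/8$ would force $(\zeta_m^k-1)^2=0$ in $\overline{\bF}_p$ for some $0<k<m$, contradicting Lemma~\ref{distinct} (whose proof applies verbatim with $m$ in place of $2n$); equivalently this is the $(f,g)=(1,2T)$ case of Proposition~\ref{prop:Wmcoprime}. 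Hence the two factors in the displayed identity involve disjoint primes, and $1+8T$ occurs in $J_n$ with multiplicity exactly $\tfrac{p^k-1}{2}$.

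Given these inputs, both characterizations follow the Fibonacci template. For perfect powers, the ``if'' direction is immediate, since for $m\in\{1,2\}$ we have $J_{p^{ak}}=J_{2p^{ak}}=(1+8T)^{(p^{ak}-1)/2}$ and $p^a\equiv 1\pmod{2j}$ forces $j\mid\tfrac{p^{ak}-1}{2}$. For ``only if'', write $n=p^km$ with $p\nmid m$ and $n>2$: if $k=0$ then $J_n=J_m$ is square-free of degree $\lceil n/2\rceil-1\ge 1$, which cannot be a $j$-th power for $j>1$, so $k\ge 1$; if $m\ge 3$ then $J_m$ is nonconstant square-free, so some prime factor divides $J_m$ exactly once and hence divides $J_n$ exactly $p^k$ times, forcing $j\mid p^k$, impossible as $\gcd(p,2j)=1$ and $j>1$; thus $m\in\{1,2\}$, $J_n=(1+8T)^{(p^k-1)/2}$, and this is a $j$-th power iff $j\mid\tfrac{p^k-1}{2}$, i.e. $p^k\equiv 1\pmod{2j}$, i.e. $a\mid k$. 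For powerfulness, ``if'' holds because $p\mid n$ with $p>3$ (resp.\ $9\mid n$ with $p=3$) makes $\tfrac{p^k-1}{2}\ge 2$ and $p^k\ge 2$, so every prime factor of $J_n$ has multiplicity $\ge 2$; ``only if'' holds because $p\nmid n$ (with $n>2$) makes $J_n$ nonconstant square-free, and when $p=3$ the case $n=3m$, $3\nmid m$, gives $J_n=(1+8T)\,J_m^3$, where $1+8T$ has multiplicity one.

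The genuinely delicate point is the coprimality of $1+8T$ and $J_m$: because the Florez--Higuita--Ramirez discriminant formula does not cover $\deg g=1$, it cannot be obtained from a resultant/discriminant computation as in the first proof of Proposition~\ref{prop:Fmsqfree}, and must instead be extracted from the explicit roots (with Lemma~\ref{distinct}) or from Lemma~\ref{lem:florez_mod} by reducing $J_m$ modulo $f^2+4g$. Beyond this, the only remaining care is bookkeeping around $n=1,2$, where $J_n=1$ is a (vacuously powerful) constant falling outside the ranges in the statement, which is why one restricts to $n>2$.
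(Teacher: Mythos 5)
Your proposal is correct and follows essentially the same route the paper intends: the paper explicitly omits the details of this proof, indicating only that one should combine Lemma \ref{lem:Wwpk}, Lemma \ref{lem:Wwpkm}, the identity $J_1 = J_2 = 1$, and Proposition \ref{thm:Jmsqfree} with the template of Theorems \ref{thm:Wn_perfect_power} and \ref{thm:Wn_powerful}, which is precisely what you do (including the correct observation that the coprimality of $1+8T$ and $J_m$ must come from the explicit roots or from Lemma \ref{lem:florez_mod} rather than the discriminant formula). Your filled-in details, in particular the role of $J_2=1$ in producing the extra family $n=2p^{ak}$ and the $p=3$, $k=1$ exception for powerfulness, are all sound.
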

Table \ref{tab:jacobsthal} shows factorizations of $J_n(T)$ over $\bF_3$ and $\bF_5$ for $1 \le n \le 27$.
It would be interesting to generalize Theorem \ref{thm:Jn} to other Fibonacci-type polynomials $W_n(T)$ with $\deg f = 0$ and $\deg g = 1$.

\begin{table}
\centering
\begin{tabular}{c|p{0.30\textwidth}|p{0.30\textwidth}|p{0.30\textwidth}}
\toprule
$n$ & $F_n(T)$ over $\mathbb{F}_2$ & $F_n(T)$ over $\mathbb{F}_3$ & $F_n(T)$ over $\mathbb{F}_5$ \\
\midrule
1 & $1$ & $1$ & $1$ \\
2 & $T$ & $T$ & $T$ \\
3 & $(T + 1)^{2}$ & $(T^{2} + 1)$ & $(T + 2)(T + 3)$ \\
4 & $T^{3}$ & $T(T + 1)(T + 2)$ & $T(T^{2} + 2)$ \\
5 & $(T^{2} + T + 1)^{2}$ & $(T^{2} + T + 2)(T^{2} + 2 T + 2)$ & $(T + 1)^{2}(T + 4)^{2}$ \\
6 & $T(T + 1)^{4}$ & $T^{3}(T^{2} + 1)$ & $T(T + 2)(T + 3)(T^{2} + 3)$ \\
7 & $(T^{3} + T^{2} + 1)^{2}$ & $(T^{6} + 2 T^{4} + 1)$ & $(T^{3} + 2 T^{2} + 2 T + 2)(T^{3} + 3 T^{2} + 2 T + 3)$ \\
8 & $T^{7}$ & $T(T + 1)(T + 2)(T^{4} + T^{2} + 2)$ & $T(T^{2} + 2)(T^{4} + 4 T^{2} + 2)$ \\
9 & $(T + 1)^{2}(T^{3} + T + 1)^{2}$ & $(T^{2} + 1)^{4}$ & $(T + 2)(T + 3)(T^{3} + 3 T + 2)(T^{3} + 3 T + 3)$ \\
10 & $T(T^{2} + T + 1)^{4}$ & $T(T^{2} + T + 2)(T^{2} + 2 T + 2)(T^{4} + 2 T^{2} + 2)$ & $(T + 1)^{2}(T + 4)^{2}T^{5}$ \\
11 & $(T^{5} + T^{4} + T^{2} + T + 1)^{2}$ & $(T^{10} + T^{6} + 2 T^{4} + 1)$ & $(T^{5} + 2 T^{4} + 4 T^{3} + T^{2} + 3 T + 2)(T^{5} + 3 T^{4} + 4 T^{3} + 4 T^{2} + 3 T + 3)$ \\
12 & $T^{3}(T + 1)^{8}$ & $T^{3}(T + 1)^{3}(T + 2)^{3}(T^{2} + 1)$ & $T(T + 2)(T + 3)(T^{2} + 2)(T^{2} + 3)(T^{2} + 2 T + 4)(T^{2} + 3 T + 4)$ \\
13 & $(T^{6} + T^{5} + T^{4} + T + 1)^{2}$ & $(T^{6} + 2 T^{2} + 1)(T^{6} + 2 T^{4} + T^{2} + 1)$ & $(T^{2} + T + 1)(T^{2} + T + 2)(T^{2} + 2 T + 3)(T^{2} + 3 T + 3)(T^{2} + 4 T + 1)(T^{2} + 4 T + 2)$ \\
14 & $T(T^{3} + T^{2} + 1)^{4}$ & $T(T^{6} + T^{4} + 2 T^{2} + 1)(T^{6} + 2 T^{4} + 1)$ & $T(T^{3} + 2 T^{2} + 2 T + 2)(T^{3} + 3 T^{2} + 2 T + 3)(T^{6} + 2 T^{4} + 4 T^{2} + 2)$ \\
15 & $(T + 1)^{2}(T^{2} + T + 1)^{2}(T^{4} + T^{3} + 1)^{2}$ & $(T^{2} + 1)(T^{2} + T + 2)^{3}(T^{2} + 2 T + 2)^{3}$ & $(T + 1)^{2}(T + 4)^{2}(T + 2)^{5}(T + 3)^{5}$ \\
16 & $T^{15}$ & $T(T + 1)(T + 2)(T^{4} + T^{2} + 2)(T^{8} + 2 T^{6} + 2 T^{4} + T^{2} + 2)$ & $T(T^{2} + 2)(T^{4} + 4 T^{2} + 2)(T^{8} + 3 T^{6} + T^{2} + 2)$ \\
17 & $(T^{4} + T + 1)^{2}(T^{4} + T^{3} + T^{2} + T + 1)^{2}$ & $(T^{8} + T^{7} + 2 T^{6} + T^{5} + T^{4} + 2 T^{2} + T + 1)(T^{8} + 2 T^{7} + 2 T^{6} + 2 T^{5} + T^{4} + 2 T^{2} + 2 T + 1)$ & $(T^{8} + 2 T^{7} + 2 T^{6} + 2 T^{5} + 3 T + 1)(T^{8} + 3 T^{7} + 2 T^{6} + 3 T^{5} + 2 T + 1)$ \\
18 & $T(T + 1)^{4}(T^{3} + T + 1)^{4}$ & $T^{9}(T^{2} + 1)^{4}$ & $T(T + 2)(T + 3)(T^{2} + 3)(T^{3} + 3 T + 2)(T^{3} + 3 T + 3)(T^{6} + T^{4} + 4 T^{2} + 3)$ \\
19 & $(T^{9} + T^{8} + T^{6} + T^{5} + T^{4} + T + 1)^{2}$ & $(T^{18} + 2 T^{16} + 2 T^{12} + 2 T^{10} + 1)$ & $(T^{9} + 2 T^{8} + 3 T^{7} + 4 T^{6} + T^{5} + 2)(T^{9} + 3 T^{8} + 3 T^{7} + T^{6} + T^{5} + 3)$ \\
20 & $T^{3}(T^{2} + T + 1)^{8}$ & $T(T + 1)(T + 2)(T^{2} + T + 2)(T^{2} + 2 T + 2)(T^{4} + 2 T^{2} + 2)(T^{4} + T^{3} + 2)(T^{4} + 2 T^{3} + 2)$ & $(T + 1)^{2}(T + 4)^{2}T^{5}(T^{2} + 2)^{5}$ \\
\bottomrule
\end{tabular}
\caption{Factorization of $F_n(T)$ over $\mathbb{F}_2$, $\mathbb{F}_3$, and $\bF_5$}
\label{tab:fibo}
\end{table}

\begin{table}
\centering
\begin{tabular}{c|p{0.45\textwidth}|p{0.45\textwidth}}
\toprule
$n$ & $U_n(T)$ over $\mathbb{F}_3$ & $U_n(T)$ over $\mathbb{F}_5$ \\
\midrule
1 & $2T$ & $2T$ \\
2 & $(T + 1)(T + 2)$ & $4(T + 2)(T + 3)$ \\
3 & $2T(T^{2} + 1)$ & $3T(T^{2} + 2)$ \\
4 & $(T^{2} + T + 2)(T^{2} + 2 T + 2)$ & $(T + 1)^{2}(T + 4)^{2}$ \\
5 & $2(T + 1)(T + 2)T^{3}$ & $2T(T + 2)(T + 3)(T^{2} + 3)$ \\
6 & $(T^{3} + T^{2} + T + 2)(T^{3} + 2 T^{2} + T + 1)$ & $4(T^{3} + 2 T^{2} + 2 T + 2)(T^{3} + 3 T^{2} + 2 T + 3)$ \\
7 & $2T(T^{2} + 1)(T^{4} + 2 T^{2} + 2)$ & $3T(T^{2} + 2)(T^{4} + 4 T^{2} + 2)$ \\
8 & $(T + 1)^{4}(T + 2)^{4}$ & $(T + 2)(T + 3)(T^{3} + 3 T + 2)(T^{3} + 3 T + 3)$ \\
9 & $2T(T^{2} + T + 2)(T^{2} + 2 T + 2)(T^{4} + T^{2} + 2)$ & $2(T + 1)^{2}(T + 4)^{2}T^{5}$ \\
10 & $(T^{5} + T^{4} + 2 T^{3} + 1)(T^{5} + 2 T^{4} + 2 T^{3} + 2)$ & $4(T^{5} + 2 T^{4} + 4 T^{3} + T^{2} + 3 T + 2)(T^{5} + 3 T^{4} + 4 T^{3} + 4 T^{2} + 3 T + 3)$ \\
11 & $2(T + 1)(T + 2)T^{3}(T^{2} + 1)^{3}$ & $3T(T + 2)(T + 3)(T^{2} + 2)(T^{2} + 3)(T^{2} + 2 T + 4)(T^{2} + 3 T + 4)$ \\
12 & $(T^{3} + 2 T + 1)(T^{3} + 2 T + 2)(T^{3} + T^{2} + 2 T + 1)(T^{3} + 2 T^{2} + 2 T + 2)$ & $(T^{2} + T + 1)(T^{2} + T + 2)(T^{2} + 2 T + 3)(T^{2} + 3 T + 3)(T^{2} + 4 T + 1)(T^{2} + 4 T + 2)$ \\
13 & $2T(T^{3} + T^{2} + 2)(T^{3} + T^{2} + T + 2)(T^{3} + 2 T^{2} + 1)(T^{3} + 2 T^{2} + T + 1)$ & $2T(T^{3} + 2 T^{2} + 2 T + 2)(T^{3} + 3 T^{2} + 2 T + 3)(T^{6} + 2 T^{4} + 4 T^{2} + 2)$ \\
14 & $(T + 1)(T + 2)(T^{2} + T + 2)^{3}(T^{2} + 2 T + 2)^{3}$ & $4(T + 1)^{2}(T + 4)^{2}(T + 2)^{5}(T + 3)^{5}$ \\
15 & $2T(T^{2} + 1)(T^{4} + 2 T^{2} + 2)(T^{8} + T^{6} + 2 T^{4} + 2 T^{2} + 2)$ & $3T(T^{2} + 2)(T^{4} + 4 T^{2} + 2)(T^{8} + 3 T^{6} + T^{2} + 2)$ \\
16 & $(T^{8} + T^{7} + 2 T^{6} + T^{3} + 2 T^{2} + 2 T + 1)(T^{8} + 2 T^{7} + 2 T^{6} + 2 T^{3} + 2 T^{2} + T + 1)$ & $(T^{8} + 2 T^{7} + 2 T^{6} + 2 T^{5} + 3 T + 1)(T^{8} + 3 T^{7} + 2 T^{6} + 3 T^{5} + 2 T + 1)$ \\
17 & $2(T + 1)^{4}(T + 2)^{4}T^{9}$ & $2T(T + 2)(T + 3)(T^{2} + 3)(T^{3} + 3 T + 2)(T^{3} + 3 T + 3)(T^{6} + T^{4} + 4 T^{2} + 3)$ \\
18 & $(T^{9} + T^{8} + T^{7} + 2 T^{6} + T^{3} + 2 T^{2} + 2 T + 1)(T^{9} + 2 T^{8} + T^{7} + T^{6} + T^{3} + T^{2} + 2 T + 2)$ & $4(T^{9} + 2 T^{8} + 3 T^{7} + 4 T^{6} + T^{5} + 2)(T^{9} + 3 T^{8} + 3 T^{7} + T^{6} + T^{5} + 3)$ \\
19 & $2T(T^{2} + 1)(T^{2} + T + 2)(T^{2} + 2 T + 2)(T^{4} + T^{2} + 2)(T^{4} + T^{3} + T^{2} + 2 T + 2)(T^{4} + 2 T^{3} + T^{2} + T + 2)$ & $3(T + 1)^{2}(T + 4)^{2}T^{5}(T^{2} + 2)^{5}$ \\
20 & $(T + 1)(T + 2)(T^{3} + T^{2} + T + 2)^{3}(T^{3} + 2 T^{2} + T + 1)^{3}$ & $(T + 2)(T + 3)(T^{3} + T + 1)(T^{3} + T + 4)(T^{3} + 2 T^{2} + 1)(T^{3} + 2 T^{2} + 2 T + 2)(T^{3} + 3 T^{2} + 4)(T^{3} + 3 T^{2} + 2 T + 3)$ \\
\bottomrule
\end{tabular}
\caption{Factorization of $U_n(T)$ over $\mathbb{F}_3$ and $\mathbb{F}_5$}
\label{tab:chebyshev2}
\end{table}

\begin{table}
\centering
\begin{tabular}{c|p{0.30\textwidth}|p{0.30\textwidth}|p{0.30\textwidth}}
\toprule
$n$ & $L_n(T)$ over $\mathbb{F}_2$ & $L_n(T)$ over $\mathbb{F}_3$ & $L_n(T)$ over $\mathbb{F}_5$ \\
\midrule
1 & $T$ & $T$ & $T$ \\
2 & $T^{2}$ & $(T + 1)(T + 2)$ & $(T^{2} + 2)$ \\
3 & $T(T + 1)^{2}$ & $T^{3}$ & $T(T^{2} + 3)$ \\
4 & $T^{4}$ & $(T^{4} + T^{2} + 2)$ & $(T^{4} + 4 T^{2} + 2)$ \\
5 & $T(T^{2} + T + 1)^{2}$ & $T(T^{4} + 2 T^{2} + 2)$ & $T^{5}$ \\
6 & $T^{2}(T + 1)^{4}$ & $(T + 1)^{3}(T + 2)^{3}$ & $(T^{2} + 2)(T^{2} + 2 T + 4)(T^{2} + 3 T + 4)$ \\
7 & $T(T^{3} + T^{2} + 1)^{2}$ & $T(T^{6} + T^{4} + 2 T^{2} + 1)$ & $T(T^{6} + 2 T^{4} + 4 T^{2} + 2)$ \\
8 & $T^{8}$ & $(T^{8} + 2 T^{6} + 2 T^{4} + T^{2} + 2)$ & $(T^{8} + 3 T^{6} + T^{2} + 2)$ \\
9 & $T(T + 1)^{2}(T^{3} + T + 1)^{2}$ & $T^{9}$ & $T(T^{2} + 3)(T^{6} + T^{4} + 4 T^{2} + 3)$ \\
10 & $T^{2}(T^{2} + T + 1)^{4}$ & $(T + 1)(T + 2)(T^{4} + T^{3} + 2)(T^{4} + 2 T^{3} + 2)$ & $(T^{2} + 2)^{5}$ \\
11 & $T(T^{5} + T^{4} + T^{2} + T + 1)^{2}$ & $T(T^{5} + T^{4} + T + 2)(T^{5} + 2 T^{4} + T + 1)$ & $T(T^{5} + 2 T^{4} + T^{2} + 4 T + 3)(T^{5} + 3 T^{4} + 4 T^{2} + 4 T + 2)$ \\
12 & $T^{4}(T + 1)^{8}$ & $(T^{4} + T^{2} + 2)^{3}$ & $(T^{4} + 2)(T^{4} + 3 T^{2} + 3)(T^{4} + 4 T^{2} + 2)$ \\
13 & $T(T^{6} + T^{5} + T^{4} + T + 1)^{2}$ & $T(T^{3} + T^{2} + T + 2)(T^{3} + T^{2} + 2 T + 1)(T^{3} + 2 T^{2} + T + 1)(T^{3} + 2 T^{2} + 2 T + 2)$ & $T(T^{4} + 3)(T^{4} + T^{2} + 2)(T^{4} + 2 T^{2} + 3)$ \\
14 & $T^{2}(T^{3} + T^{2} + 1)^{4}$ & $(T + 1)(T + 2)(T^{3} + 2 T + 1)(T^{3} + 2 T + 2)(T^{3} + T^{2} + 2)(T^{3} + 2 T^{2} + 1)$ & $(T^{2} + 2)(T^{6} + T^{5} + 4 T^{4} + 3 T^{3} + 4 T^{2} + 3 T + 1)(T^{6} + 4 T^{5} + 4 T^{4} + 2 T^{3} + 4 T^{2} + 2 T + 1)$ \\
15 & $T(T + 1)^{2}(T^{2} + T + 1)^{2}(T^{4} + T^{3} + 1)^{2}$ & $T^{3}(T^{4} + 2 T^{2} + 2)^{3}$ & $T^{5}(T^{2} + 3)^{5}$ \\
16 & $T^{16}$ & $(T^{16} + T^{14} + 2 T^{12} + T^{10} + T^{2} + 2)$ & $(T^{16} + T^{14} + 4 T^{12} + 2 T^{10} + 2 T^{6} + T^{4} + 4 T^{2} + 2)$ \\
17 & $T(T^{4} + T + 1)^{2}(T^{4} + T^{3} + T^{2} + T + 1)^{2}$ & $T(T^{16} + 2 T^{14} + 2 T^{12} + T^{10} + 2 T^{8} + 2)$ & $T(T^{16} + 2 T^{14} + 4 T^{12} + 2 T^{10} + 2 T^{6} + 4 T^{4} + 4 T^{2} + 2)$ \\
18 & $T^{2}(T + 1)^{4}(T^{3} + T + 1)^{4}$ & $(T + 1)^{9}(T + 2)^{9}$ & $(T^{2} + 2)(T^{2} + 2 T + 4)(T^{2} + 3 T + 4)(T^{6} + T^{4} + 2 T^{3} + 4 T^{2} + T + 4)(T^{6} + T^{4} + 3 T^{3} + 4 T^{2} + 4 T + 4)$ \\
19 & $T(T^{9} + T^{8} + T^{6} + T^{5} + T^{4} + T + 1)^{2}$ & $T(T^{18} + T^{16} + 2 T^{14} + 2 T^{12} + T^{10} + 2 T^{8} + 1)$ & $T(T^{9} + T^{8} + T^{5} + 2 T^{4} + 2 T^{3} + 2 T^{2} + T + 4)(T^{9} + 4 T^{8} + T^{5} + 3 T^{4} + 2 T^{3} + 3 T^{2} + T + 1)$ \\
20 & $T^{4}(T^{2} + T + 1)^{8}$ & $(T^{4} + T + 2)(T^{4} + 2 T + 2)(T^{4} + T^{2} + 2)(T^{4} + T^{3} + T^{2} + T + 1)(T^{4} + 2 T^{3} + T^{2} + 2 T + 1)$ & $(T^{4} + 4 T^{2} + 2)^{5}$ \\
\bottomrule
\end{tabular}
\caption{Factorization of $L_n(T)$ over $\mathbb{F}_2$,  $\mathbb{F}_3$, and $\mathbb{F}_5$}
\label{tab:lucas}
\end{table}

\begin{table}
\centering
\begin{tabular}{c|p{0.45\textwidth}|p{0.45\textwidth}}
\toprule
$n$ & $T_n(T)$ over $\mathbb{F}_3$ & $T_n(T)$ over $\mathbb{F}_5$ \\
\midrule
1 & $T$ & $T$ \\
2 & $2(T^{2} + 1)$ & $2(T^{2} + 2)$ \\
3 & $T^{3}$ & $4T(T^{2} + 3)$ \\
4 & $2(T^{4} + 2 T^{2} + 2)$ & $3(T^{4} + 4 T^{2} + 2)$ \\
5 & $T(T^{4} + T^{2} + 2)$ & $T^{5}$ \\
6 & $2(T^{2} + 1)^{3}$ & $2(T^{2} + 2)(T^{2} + 2 T + 4)(T^{2} + 3 T + 4)$ \\
7 & $T(T^{3} + T^{2} + 2)(T^{3} + 2 T^{2} + 1)$ & $4T(T^{6} + 2 T^{4} + 4 T^{2} + 2)$ \\
8 & $2(T^{8} + T^{6} + 2 T^{4} + 2 T^{2} + 2)$ & $3(T^{8} + 3 T^{6} + T^{2} + 2)$ \\
9 & $T^{9}$ & $T(T^{2} + 3)(T^{6} + T^{4} + 4 T^{2} + 3)$ \\
10 & $2(T^{2} + 1)(T^{4} + T^{3} + T^{2} + 2 T + 2)(T^{4} + 2 T^{3} + T^{2} + T + 2)$ & $2(T^{2} + 2)^{5}$ \\
11 & $T(T^{10} + T^{8} + 2 T^{6} + T^{4} + T^{2} + 1)$ & $4T(T^{5} + 2 T^{4} + T^{2} + 4 T + 3)(T^{5} + 3 T^{4} + 4 T^{2} + 4 T + 2)$ \\
12 & $2(T^{4} + 2 T^{2} + 2)^{3}$ & $3(T^{4} + 2)(T^{4} + 3 T^{2} + 3)(T^{4} + 4 T^{2} + 2)$ \\
13 & $T(T^{6} + 2 T^{2} + 1)(T^{6} + 2 T^{4} + 1)$ & $T(T^{4} + 3)(T^{4} + T^{2} + 2)(T^{4} + 2 T^{2} + 3)$ \\
14 & $2(T^{2} + 1)(T^{6} + T^{4} + 2 T^{2} + 1)(T^{6} + 2 T^{4} + T^{2} + 1)$ & $2(T^{2} + 2)(T^{6} + T^{5} + 4 T^{4} + 3 T^{3} + 4 T^{2} + 3 T + 1)(T^{6} + 4 T^{5} + 4 T^{4} + 2 T^{3} + 4 T^{2} + 2 T + 1)$ \\
15 & $T^{3}(T^{4} + T^{2} + 2)^{3}$ & $4T^{5}(T^{2} + 3)^{5}$ \\
16 & $2(T^{16} + 2 T^{14} + 2 T^{12} + 2 T^{10} + 2 T^{2} + 2)$ & $3(T^{16} + T^{14} + 4 T^{12} + 2 T^{10} + 2 T^{6} + T^{4} + 4 T^{2} + 2)$ \\
17 & $T(T^{16} + T^{14} + 2 T^{12} + 2 T^{10} + 2 T^{8} + 2)$ & $T(T^{16} + 2 T^{14} + 4 T^{12} + 2 T^{10} + 2 T^{6} + 4 T^{4} + 4 T^{2} + 2)$ \\
18 & $2(T^{2} + 1)^{9}$ & $2(T^{2} + 2)(T^{2} + 2 T + 4)(T^{2} + 3 T + 4)(T^{6} + T^{4} + 2 T^{3} + 4 T^{2} + T + 4)(T^{6} + T^{4} + 3 T^{3} + 4 T^{2} + 4 T + 4)$ \\
19 & $T(T^{9} + T^{8} + T^{6} + 2 T^{5} + T^{4} + 2 T^{3} + T^{2} + 2 T + 2)(T^{9} + 2 T^{8} + 2 T^{6} + 2 T^{5} + 2 T^{4} + 2 T^{3} + 2 T^{2} + 2 T + 1)$ & $4T(T^{9} + T^{8} + T^{5} + 2 T^{4} + 2 T^{3} + 2 T^{2} + T + 4)(T^{9} + 4 T^{8} + T^{5} + 3 T^{4} + 2 T^{3} + 3 T^{2} + T + 1)$ \\
20 & $2(T^{4} + 2 T^{2} + 2)(T^{4} + T^{3} + 2 T + 1)(T^{4} + T^{3} + 2 T^{2} + 2 T + 2)(T^{4} + 2 T^{3} + T + 1)(T^{4} + 2 T^{3} + 2 T^{2} + T + 2)$ & $3(T^{4} + 4 T^{2} + 2)^{5}$ \\
\bottomrule
\end{tabular}
\caption{Factorization of $T_n(T)$ over $\mathbb{F}_3$ and $\mathbb{F}_5$}
\label{tab:chebyshev1}
\end{table}

\begin{table}
\centering
\begin{tabular}{c|p{0.45\textwidth}|p{0.45\textwidth}}
\toprule
$n$ & $J_n(T)$ over $\mathbb{F}_3$ & $J_n(T)$ over $\mathbb{F}_5$ \\
\midrule
1 & $1$ & $1$ \\
2 & $1$ & $1$ \\
3 & $2(T + 2)$ & $2(T + 3)$ \\
4 & $(T + 1)$ & $4(T + 4)$ \\
5 & $(T^{2} + 1)$ & $4(T + 2)^{2}$ \\
6 & $2(T + 2)$ & $2(T + 1)(T + 3)$ \\
7 & $2(T^{3} + 2 T + 2)$ & $3(T^{3} + 3 T^{2} + 2)$ \\
8 & $2(T + 1)(T^{2} + T + 2)$ & $2(T + 4)(T^{2} + T + 2)$ \\
9 & $(T + 2)^{4}$ & $(T + 3)(T^{3} + 2 T^{2} + 4 T + 2)$ \\
10 & $2(T^{2} + 1)(T^{2} + 2 T + 2)$ & $4(T + 2)^{2}$ \\
11 & $2(T^{5} + 2 T^{3} + 2 T^{2} + 2)$ & $2(T^{5} + T^{2} + 4 T + 3)$ \\
12 & $2(T + 2)(T + 1)^{3}$ & $2(T + 1)(T + 3)(T + 4)(T^{2} + 2 T + 4)$ \\
13 & $(T^{3} + T^{2} + 2)(T^{3} + 2 T^{2} + 2 T + 2)$ & $4(T^{2} + T + 1)(T^{2} + 3 T + 4)(T^{2} + 4 T + 1)$ \\
14 & $(T^{3} + 2 T + 2)(T^{3} + T^{2} + T + 2)$ & $3(T^{3} + T^{2} + 4 T + 1)(T^{3} + 3 T^{2} + 2)$ \\
15 & $2(T + 2)(T^{2} + 1)^{3}$ & $3(T + 2)^{2}(T + 3)^{5}$ \\
16 & $(T + 1)(T^{2} + T + 2)(T^{4} + T^{3} + T^{2} + 2 T + 2)$ & $4(T + 4)(T^{2} + T + 2)(T^{4} + 4 T^{3} + 3 T + 3)$ \\
17 & $(T^{8} + 2 T^{3} + T^{2} + 1)$ & $(T^{8} + 3 T^{7} + 4 T^{5} + 3 T^{3} + 4 T^{2} + 1)$ \\
18 & $(T + 2)^{4}$ & $4(T + 1)(T + 3)(T^{3} + 2 T^{2} + 4 T + 2)(T^{3} + 4 T^{2} + 3 T + 4)$ \\
19 & $2(T^{9} + T^{4} + 2 T^{3} + 2 T + 2)$ & $2(T^{9} + 3 T^{6} + 2 T^{5} + 3 T^{4} + 2 T + 3)$ \\
20 & $2(T + 1)(T^{2} + 1)(T^{2} + 2 T + 2)(T^{4} + T^{2} + T + 1)$ & $(T + 2)^{2}(T + 4)^{5}$ \\
21 & $(T + 2)(T^{3} + 2 T + 2)^{3}$ & $4(T + 3)(T^{3} + 2 T^{2} + T + 3)(T^{3} + 2 T^{2} + 2 T + 3)(T^{3} + 3 T^{2} + 2)$ \\
22 & $2(T^{5} + 2 T^{3} + 2 T^{2} + 2)(T^{5} + T^{4} + T^{3} + 2 T^{2} + T + 1)$ & $4(T^{5} + T^{2} + 4 T + 3)(T^{5} + 3 T^{3} + 3 T^{2} + T + 3)$ \\
23 & $2(T^{11} + T^{9} + 2 T^{6} + 2 T^{5} + 2 T^{2} + 2)$ & $3(T^{11} + 3 T^{10} + T^{8} + 4 T^{6} + 2 T^{5} + 4 T^{3} + 4 T + 2)$ \\
24 & $(T + 2)(T + 1)^{3}(T^{2} + T + 2)^{3}$ & $(T + 1)(T + 3)(T + 4)(T^{2} + 2)(T^{2} + T + 2)(T^{2} + 2 T + 4)(T^{2} + 3 T + 3)$ \\
25 & $(T^{2} + 1)(T^{10} + T^{8} + 2 T^{7} + 2 T^{6} + 2 T^{5} + T^{4} + T^{3} + 2 T^{2} + T + 1)$ & $(T + 2)^{12}$ \\
26 & $(T^{3} + 2 T + 1)(T^{3} + T^{2} + 2)(T^{3} + 2 T^{2} + 1)(T^{3} + 2 T^{2} + 2 T + 2)$ & $3(T^{2} + 3)(T^{2} + T + 1)(T^{2} + 2 T + 3)(T^{2} + 3 T + 4)(T^{2} + 4 T + 1)(T^{2} + 4 T + 2)$ \\
27 & $2(T + 2)^{13}$ & $2(T + 3)(T^{3} + 2 T^{2} + 4 T + 2)(T^{9} + 3 T^{8} + 2 T^{6} + 2 T^{5} + T^{4} + 4 T^{3} + 3 T + 3)$ \\
\bottomrule
\end{tabular}
\caption{Factorization of $J_n(T)$ over $\bF_3$ and $\mathbb{F}_5$}
\label{tab:jacobsthal}
\end{table}


\newpage
\bibliographystyle{acm} 
\bibliography{refs} 

@article{wiles1995modular,
  title={{Modular elliptic curves and Fermat's last theorem}},
  author={Wiles, Andrew},
  journal={Annals of mathematics},
  volume={141},
  number={3},
  pages={443--551},
  year={1995},
  publisher={JSTOR}
}

@article{taylor1995ring,
  title={{Ring-theoretic properties of certain Hecke algebras}},
  author={Taylor, Richard and Wiles, Andrew},
  journal={Annals of Mathematics},
  volume={141},
  number={3},
  pages={553--572},
  year={1995},
  publisher={JSTOR}
}

@article{bugeaud2006classical,
  title={{Classical and modular approaches to exponential Diophantine equations I. Fibonacci and Lucas perfect powers}},
  author={Bugeaud, Yann and Mignotte, Maurice and Siksek, Samir},
  journal={Annals of mathematics},
  pages={969--1018},
  year={2006},
  publisher={JSTOR}
}

@article{yabuta2007abc,
  title={{The ABC-conjecture and the powerful numbers in Lucas sequences}},
  author={Yabuta, Minoru},
  journal={The Fibonacci Quarterly},
  volume={45},
  number={4},
  pages={362--365},
  year={2007},
  publisher={Taylor \& Francis}
}

@article{ribenboim1999abc,
  title={The ABC conjecture and the powerful part of terms in binary recurring sequences},
  author={Ribenboim, Paulo and Walsh, Gary},
  journal={Journal of Number Theory},
  volume={74},
  number={1},
  pages={134--147},
  year={1999},
  publisher={Elsevier}
}

@article {cohn1964square,
    AUTHOR = {Cohn, J. H. E.},
     TITLE = {On square {F}ibonacci numbers},
   JOURNAL = {J. London Math. Soc.},
  FJOURNAL = {The Journal of the London Mathematical Society},
    VOLUME = {39},
      YEAR = {1964},
     PAGES = {537--540},
      ISSN = {0024-6107,1469-7750},
   MRCLASS = {10.07},
  MRNUMBER = {163867},
MRREVIEWER = {T.\ M.\ Apostol},
       DOI = {10.1112/jlms/s1-39.1.537},
       URL = {https://doi.org/10.1112/jlms/s1-39.1.537},
}

@article{kitayama2017irreducibility,
  title={{On the irreducibility of Fibonacci and Lucas polynomials over finite fields}},
  author={Kitayama, Hidetaka and Shiomi, Daisuke},
  journal={Finite Fields and Their Applications},
  volume={48},
  pages={420--429},
  year={2017},
  publisher={Elsevier}
}

@book{shorey1986exponential,
  title={Exponential diophantine equations},
  author={Shorey, Tarlok N and Tijdeman, Robert},
  volume={87},
  year={1986},
  publisher={Cambridge University Press}
}

@article{cohn1965lucas,
  title={{Lucas and Fibonacci numbers and some Diophantine equations}},
  author={Cohn, JHE},
  journal={Glasgow Mathematical Journal},
  volume={7},
  number={1},
  pages={24--28},
  year={1965},
  publisher={Cambridge University Press}
}

@article{london1969fibonacci,
  title={{On Fibonacci and Lucas numbers which are perfect powers}},
  author={London, Hymie and Finkelstein, Raphael},
  journal={The Fibonacci Quarterly},
  volume={7},
  number={5},
  pages={476--481},
  year={1969},
  publisher={Taylor \& Francis}
}

@article{petho1983full,
  title={{Full cubes in the Fibonacci sequence}},
  author={Peth\"o, Attila},
  journal={PUBLICATIONES MATHEMATICAE-DEBRECEN},
  volume={30},
  number={1-2},
  pages={117--127},
  year={1983},
  publisher={KOSSUTH LAJOS TUDOMANYEGYETEM MATEMATIKAI INTEZET NAGYERDEIKORUT, 10~…}
}

@article{petho1981perfect,
  title={Perfect powers in second order recurrences},
  author={Peth\"o, Attila},
  journal={Topics in classical number theory},
  volume={2},
  pages={1217--1227},
  year={1981},
  publisher={North-Holland New York}
}

@article{petho2001diophantine,
  title={Diophantine properties of linear recursive sequences {II}},
  author={Peth\"o, Attila},
  journal={Acta Math. Acad. Paed. Nyiregyh{\'a}ziensis},
  volume={17},
  pages={81--96},
  year={2001}
}

@article{florez2019resultant,
  title={{The Resultant, the Discriminant, and the Derivative of Generalized Fibonacci Polynomials}},
  author={Fl{\'o}rez, Rigoberto and Higuita, Robinson A and Ram{\i}rez, Alexander},
  journal={Journal of Integer Sequences},
  volume={22},
  number={2},
  pages={3},
  year={2019}
}

@Inbook{Horadam1996,
author="Horadam, A. F.",
editor="Bergum, Gerald E.
and Philippou, Andreas N.
and Horadam, Alwyn F.",
title="A Synthesis of Certain Polynomial Sequences",
bookTitle="Applications of Fibonacci Numbers: Volume 6 Proceedings of `The Sixth International Research Conference on Fibonacci Numbers and Their Applications', Washington State University, Pullman, Washington, U.S.A., July 18--22, 1994",
year="1996",
publisher="Springer Netherlands",
address="Dordrecht",
pages="215--229",
abstract="Encouraged by the comments of the reviewer [1] of my earlier article [4], I now take the opportunity to extend the material in [4] to incorporate some new thoughts on general recursively-defined polynomial sequences of the second order.",
isbn="978-94-009-0223-7",
doi="10.1007/978-94-009-0223-7_18",
url="https://doi.org/10.1007/978-94-009-0223-7_18"
}

@article{webb1969divisibility,
  title={Divisibility properties of Fibonacci polynomials},
  author={Webb, WA and Parberry, EA},
  journal={The Fibonacci Quarterly},
  volume={7},
  number={5},
  pages={457--463},
  year={1969},
  publisher={Taylor \& Francis}
}

@article{bergum1974irreducibility,
  title={Irreducibility of Lucas and generalized Lucas polynomials},
  author={Bergum, Gerald E and Hoggatt Jr, Verner E},
  journal={The Fibonacci Quarterly},
  volume={12},
  number={1},
  pages={95--100},
  year={1974},
  publisher={Taylor \& Francis}
}

@book{koshy,
  title={{Fibonacci and Lucas Numbers with Applications}},
  author={Koshy, Thomas},
  year={2001},
  publisher={John Wiley \& Sons}
}

@article{brillhart1988tables,
  title={{Tables of Fibonacci and Lucas factorizations}},
  author={Brillhart, John and Montgomery, Peter L and Silverman, Robert D},
  journal={Mathematics of Computation},
  volume={50},
  number={181},
  pages={251--260},
  year={1988}
}

@article{dubner1999new,
  title={{New Fibonacci and Lucas primes}},
  author={Dubner, Harvey and Keller, Wilfrid},
  journal={Mathematics of computation},
  volume={68},
  number={225},
  pages={417--427},
  year={1999}
}

@article{horadam1997jacobsthal,
  title={Jacobsthal representation polynomials},
  author={Horadam, AF},
  journal={The Fibonacci Quarterly},
  volume={35},
  number={2},
  pages={137--148},
  year={1997},
  publisher={Taylor \& Francis}
}

@article{bhargava1999factoring,
  title={{Factoring Dickson polynomials over finite fields}},
  author={Bhargava, Manjul and Zieve, Michael E},
  journal={Finite Fields and Their Applications},
  volume={5},
  number={2},
  pages={103--111},
  year={1999},
  publisher={Elsevier}
}

\medskip

{Department of Mathematics, University of California---Berkeley, Berkeley, CA 94720, USA}

Graeme Bates \href{mailto:graemebates@berkeley.edu}{graemebates@berkeley.edu}

Ryan Jesubalan \href{mailto:rpj@berkeley.edu}{rpj@berkeley.edu}

Seewoo Lee \href{mailto:seewoo5@berkeley.edu}{seewoo5@berkeley.edu}

Jane Lu \href{mailto:zijun@berkeley.edu}{zijun@berkeley.edu}

Hyewon Shim \href{mailto:shmhywn@berkeley.edu}{shmhywn@berkeley.edu}


\end{document}